\newtheorem{theorem}{Theorem}[section]
\newtheorem{lemma}[theorem]{Lemma}
\newtheorem{proposition}[theorem]{Proposition}
\newtheorem{corollary}[theorem]{Corollary}
\theoremstyle{definition}
\newtheorem{definition}[theorem]{Definition}
\newtheorem{assumption}[theorem]{Assumption}
\newtheorem*{thmA}{Theorem A}
\newtheorem*{theorem*}{Theorem}
\newtheorem*{proposition*}{Proposition}
\theoremstyle{remark}
\newtheorem{remark}[theorem]{Remark}
\numberwithin{equation}{section}
\newcommand\Z{\ensuremath{\mathbb Z}}\newcommand\A{\ensuremath{\mathbb A}}
\newcommand\Q{\ensuremath{\mathbb Q}}
\newcommand\C{\ensuremath{\mathbb C}}\newcommand\F{\ensuremath{\mathbb F}}
\newcommand\ab{\operatorname{ab}}
\newcommand\cont{\operatorname{cont}}
\newcommand\cycl{{\mathrm{cyc}}}
\newcommand\Frob{\operatorname{Fr}}
\newcommand\Gal{\operatorname{Gal}}
\newcommand\GL{\operatorname{GL}}
\newcommand\Ind{\operatorname{Ind}}
\newcommand\nr{\operatorname{nr}}
\newcommand\ord{\operatorname{ord}}
\newcommand\Res{\operatorname{Res}}
\newcommand\Tr{\operatorname{Tr}}
\newcommand\id{\operatorname{id}}
\newcommand{\cH}{\mathcal{S}}
\newcommand{\cC}{\mathcal{C}}
\newcommand{\fp}{{\mathfrak{p}}}
\newcommand{\fc}{{\mathfrak{c}}}
\newcommand{\cE}{{\mathcal{E}}}
\renewcommand{\L}{\mathcal{L}}
\newcommand{\tto}[1]{%
\ifthenelse{\equal{#1}{}}{\to}{\stackrel{#1}{\to}}}
\def\cO{{\mathcal O}}
\newcommand{\eps}{\varepsilon}
\newcommand{\comp}{\begin{picture}(6,5)(-3,-2)\put(0,1){\circle{2}} \end{picture}}\def\circ{\comp}
\def\res{\operatorname{res}}
\newcommand{\ra}{\rightarrow}
\newcommand{\lra}{\longrightarrow}
\def\XXint#1#2#3{{\setbox0=\hbox{$#1{#2#3}{\int}$}
\vcenter{\hbox{$#2#3$}}\kern-.5\wd0}}
\newcommand{\cG}{\mathcal{G}}
\newcommand{\bff}{{\bf f}}
\newcommand{\bfg}{{\bf g}}
\newcommand{\bfh}{{\bf h}}
\newcommand{\ubff}{\breve{\bf f}}
\newcommand{\ubfg}{\breve{\bf g}}
\newcommand{\ubfh}{\breve{\bf h}}
\newcommand\cW{\ensuremath{\mathcal W}}
\newcommand\coh{\operatorname{H}}
\newcommand\fil{\operatorname{Fil}}
\newcommand\norm{\operatorname{N}}
\newcommand{\derham}{\mathrm{dR}}
\newcommand{\cris}{{\mathrm{cris}}}
\newcommand{\V}{\mathbb{V}}
\newcommand{\Sel}{\operatorname{Sel}}
\newcommand{\Sh}{\operatorname{Sh}}
\newcommand{\Tate}{\mathrm{Tate}}
\newcommand{\bal}{\mathrm{bal}}
\newcommand{\Dderham}{\operatorname{D}_\mathrm{dR}}
\newcommand{\Art}{\mathrm{Art}}
\newcommand{\anti}{\mathrm{ant}}
\newcommand{\cR}{\mathcal{R}}
\newcommand{\inv}{{-1}}
\newcommand{\BB}{\mathrm{B}}
\renewcommand{\hom}{\mathrm{Hom}}
\begin{document} 

\title[Kolyvagin classes vs diagonal classes]{Kolyvagin classes versus non-cristalline diagonal classes}

\author{Francesca Gatti and Victor Rotger}
\address{F. G.: Departament de Matem\`{a}tiques, Universitat Polit\`{e}cnica de Catalunya, C. Jordi Girona 1-3, 08034 Barcelona, Spain}
\email{francesca.gatti@upc.edu}

\address{V. R.: IMTech, UPC and Centre de Recerca Matem\`{a}tiques, C. Jordi Girona 1-3, 08034 Barcelona, Spain}
\email{victor.rotger@upc.edu}

\thanks{This project has received funding from the European Research Council (ERC) under the European Union's Horizon 2020 research and innovation programme (grant agreement No 682152). 
%The first author was partially supported by ... 
The second author gratefully acknowledges Icrea for financial support through an Icrea Academia award. We are most grateful to Adrian Iovita and Xevi Guitart for their interest and extremely helpful comments and to Matteo Longo for reading our work in detail.
}

\date{\today}

%\dedicatory{}

\begin{abstract}
Let  $E/\Q$ be an elliptic curve having multiplicative reduction at a prime $p$. Let  $(g,h)$ be a pair of eigenforms of weight $1$ arising as the theta series of an imaginary quadratic field $K$, and assume that the triple-product $L$-function $L(f,g,h,s)$ is self-dual and does not vanish at the central critical point $s=1$. The main result of this article is a formula expressing the $p$-adic iterated integrals introduced in \cite{DLR} to the Kolyvagin classes associated by Bertolini and Darmon to a system of Heegner points on $E$. 
\end{abstract}
\maketitle 

%\tableofcontents

\section{Introduction}\label{sec: Introduction}

Let $E/\Q$ be an elliptic curve with square-free conductor $N_f$ and let $f\in S_2(N_f)$ be the normalized newform attached to it by modularity. Let $K$ be an imaginary quadratic field of discriminant $-D_K$ relatively prime to $N_f$. Let $$\psi_g,\psi_h:\A_K^\times/K^\times\longrightarrow\C^\times$$ be two finite order Hecke characters of $K$ of conductors $\fc_g,\fc_h$ respectively and let $g:=\theta(\psi_g)$ and $h:=\theta(\psi_h)$ denote the theta series associated to them. Assume that the central characters  of $g$ and $h$ are mutually inverse. Then
\begin{equation*}
	g\in M_1(N_g,\chi), \qquad h\in M_1(N_h,\chi^{-1})
\end{equation*}
are weight one modular forms of level $N_g=D_K\cdot N_{K/\Q}(\fc_g), N_h=D_K\cdot N_{K/\Q}(\fc_h)$ respectively. 
%Moreover, if $\chi_K$ denotes the quadratic Dirichlet character attached to $K$, the nebentype characters of $g$ and $h$ are respectively $\chi:=\chi_K\cdot\epsilon$ and $\chi^{-1}=\overline{\chi}$.
For simplicity we assume throughout that 
\begin{equation}\label{eq: assumption on levels}
	\gcd(N_f,N_g N_h)=1
\end{equation}
and we set $N := \mathrm{lcm}(N_f,N_g,N_h)$.
Let $p$ be an odd prime number such that 
\begin{equation}\label{eq: conditions on p}
	p\mid\mid N_f \ \text{ and } \ p \text{ is inert in $K$}.
\end{equation} 
Hence $E$ has multiplicative reduction at $p$ and the completion $K_p$ of $K$ at $p$ is the unramified quadratic extension of $\Q_p$. Let 
\begin{equation*}\label{eq: tates uniformisation in the text}
	\varphi_\Tate:\bar{\Q}_p^\times/q_E^\Z\overset{\cong}{\longrightarrow}E(\bar{\Q}_p) 
\end{equation*}
be Tate's uniformization, with $q_E\in p\Z_p$, and assume throughout that 
\begin{equation}\label{ass: p doesnt divide valuation of tates uniformizer}	p\nmid\ord_p(q_E).
\end{equation}

%Let $$\rho_g: G_{\Q} \rightarrow \GL(V_g), \quad \rho_h: G_{\Q} \rightarrow \GL(V_h)$$ denote the Artin representations with coefficients in $L_p$ attached to $g$ and $h$ respectively, and consider the tensor product
%\begin{equation*}
%	\rho:=\rho_g\otimes\rho_h:G_\Q\longrightarrow\Aut(V_g\otimes_{L_p} V_h).
%\end{equation*} 
%Let $V_f=T_p(E)\otimes\Q_p$ be the $p$-adic representation attached to $f$, where $T_p(E)$ denotes the Tate module of $E$. The complex $L$-function 
%\begin{equation*}
%	L(E\otimes\rho,s):=L(V_f\otimes V_g\otimes V_h,s)
%\end{equation*} 
%attached to the twist $E$  by $\rho$
%is defined as the $L$-function associated to the compatible system of Galois representation $V:=V_f\otimes V_g\otimes V_h$. 

%After multiplying this $L$-function by an appropriate archimedean factor $L_\infty(E\otimes\rho,s)$, one obtains an entire function $\Lambda(E\otimes\rho,s)$ which satisfies a functional equation of the form
%\begin{equation}\label{eq: functional equation}
%\Lambda(E\otimes\rho,s)=\eps\cdot\Lambda(E\otimes\rho,1-s),
%\end{equation}
%where $\eps\in\{ \pm1\}$. Moreover, $L_\infty(E\otimes\rho,s)$ has neither zeroes nor poles at the central point $s=1$.

Set $$a:=a_p(E)=a_p(f) \in \{ \pm 1\}.$$ In other words, $a=1$ (resp.\,$a=-1$) according to whether $E$ has split (non-split) multiplicative reduction at $p$. 

Let $\alpha_g,\beta_g$ (resp.\,$\alpha_h,\beta_h$) denote the roots of the $p$-th Hecke polynomial of $g$ (resp.\,of $h$).  Note that \eqref{eq: conditions on p} implies that \begin{equation*}
	\alpha_g = -\beta_g \ \text{ and } \ \alpha_h = -\beta_h.
\end{equation*} 
Since the nebentype character of $h$ is the inverse of that of $g$, one either has
\begin{equation*}\label{eq: two cases for the eigenvalues of h}
	(\alpha_h,\beta_h)=(1/\alpha_g,-1/\alpha_g) \ \text{ or } \ (\alpha_h,\beta_h)=(-1/\alpha_g,1/\alpha_g).
\end{equation*}
Throughout this article we fix the ordering of the pair $(\alpha_h,\beta_h)$ in such a way that
\begin{equation}\label{ah-bh}
\alpha_g \cdot \alpha_h = -a.
\end{equation}

Fix throughout algebraic closures $\bar\Q$ and $\bar\Q_p$ of $\Q$ and $\Q_p$ respectively and an embedding $\bar\Q \hookrightarrow \bar\Q_p$. Let $L\subset \bar\Q$ denote the number field generated by the traces of $\psi_g$ and $\psi_h$, together with the roots $\alpha_g, \beta_g, \alpha_h, \beta_h$. 
and let  $L_p$ denote the completion of $L$ within $\bar\Q_p$.

Set $g_\alpha(z):=g(z)-\beta_g g(pz)$ and define $h_\alpha$ analogously. Let $\bff,\bfg_\alpha,\bfh_\alpha$ be Hida families passing through $f, g_\alpha,h_\alpha$ respectively. As explained in \cite{DR1} and \cite{DR2},  associated to any choice of $\Lambda$-adic test vectors $(\breve{\bff},\breve{\bfg}_\alpha,\breve{\bfh}_\alpha)$ of tame level $N$  there is a three-variable $p$-adic $L$-function 
\begin{equation*}
\L_p^g(\breve{\bff},\breve{\bfg}_\alpha,\breve{\bfh}_\alpha).
\end{equation*}
This $p$-adic $L$-function interpolates the square-roots of the central values of the classical $L$-function $L(\breve{f}_k\otimes \breve{g}_\ell\otimes \breve{h}_m,s)$ attached to the specializations of the choice of test vectors at classical points of weights $k,\ell,m$  with $k,\ell,m\geq 2$ and $\ell\geq k+m$. 

Note that the point $(2,1,1)$ corresponding to our triple of modular forms $(f,g,h)$ lies outside the region of classical interpolation for $\L_p^g(\breve{\bff},\breve{\bfg}_\alpha,\breve{\bfh}_\alpha)$.
As in \cite{DLR} and \cite{GGMR19} we are interested in studying the $p$-adic $L$-value
\begin{equation*}\label{iter}
I_p(f,g_\alpha,h_\alpha) := \L_p^g(\breve{\bff},\breve{\bfg}_\alpha,\breve{\bfh}_\alpha)(2,1,1)
\end{equation*}
regarded as an element of $\bar\Q_p^\times/L^\times$. As explained in \cite{DR2} and \cite{DLR}, different choices of $\Lambda$-adic test vectors yield the same $p$-adic value $I_p(f,g_\alpha,h_\alpha)$ up to algebraic scalars  in $L^\times$. This 
entitles us to denote it simply $I_p(f,g_\alpha,h_\alpha)$, dropping from the notation this choice.

The triple-product $L$-function $L(f,g,h,s) = L(E,\rho,s)$ may be recast as the Hasse-Weil $L$-series of the twist of $E$ by the tensor product $\rho=\rho_g\otimes \rho_h$ of the two Artin representations associated to $g$ and $h$. In light of \eqref{eq: assumption on levels},  the order of vanishing of $L(f,g,h,s)$ at $s=1$ is always even and one thus expects the central critical value $L(E,\rho,1)$ to be generically nonzero. We assume throughout that we fall indeed in this generic case, that is to say, we have
\begin{assumption}\label{ass: rank 0}
	$L(E,\rho,1) \neq0$.
\end{assumption}
%Recall that the main conjecture on $I_p(f,g_\alpha,h_\alpha)$ formulated in \cite{DLR} as well as the main formula of this paper, hold modulo $L^\times$. 

The main purpose of this article is proving a formula relating the $p$-adic iterated integral  $I_p(f,g_\alpha,h_\alpha) $ to the Kolyvagin classes associated by Bertolini and Darmon in \cite{BD97}.

In order to recall briefly the latter,  for every $m\geq 1$ let $F_m/H$ be the layer of degree $p^m$ within the anticyclotomic $\Z_p$-extension of $H$. In \cite{BD97} Bertolini and Darmon associated to a canonical collection of {\em Heegner points} $\{\alpha_m\in E(F_m)\}$ a global cohomology class $\mathbf{K}\in\coh^1(H,V_f)$ that is referred to in loc.\,cit.\,as the {\em Kolyvagin class} associated to $E/H$.
 
 % and in \S\ref{sec: Lpg and kolyvagin classes} we use this class to produce elements  $C^{\psi_i}$ in $\Sel_{(p)}(V_i)$. More precisely, for $i\in\{1,2\}$, the class $C^{\psi_i}$ is the image of $C$ via the isomorphism
 %\begin{equation*}
 %	\coh^1(H,V_f)\overset{\cong}{\longrightarrow}\coh^1(K,V_f\otimes \psi_i)\overset{\cong}{\longrightarrow}\coh^1(\Q,V_i).
% \end{equation*}
%Here the first map is a trace attached to the character $\psi_i$, and the second isomorphism is given by Shapiro's Lemma.
%In \S\ref{sec: ex appendix} we describe a canonical basis 
%\begin{equation}\label{eq: canonical basis for relaxed selmer of Vi}
%\{ \xi_i^+, \xi_i^- \}
%\end{equation} for $\Sel_{(p)}(V_i)$, whose construction is analogous to  \eqref{eq: canonical basis for relaxed selmer}.
% Combining results of \cite{BD97} with the explicit description of the Bloch-Kato dual exponential relative to the representations involved, that we study in detail in \S\ref{sec: ex appendix}, we are able to describe the precise relation between the  %class $C_i$ and the basis  \eqref{eq: canonical basis for relaxed selmer of Vi}. 

%In order to state this relation, we need to fix some notation. Recall that $p$ divides exactly the conductor of $E$, so $a_p(E)$ is $+1$ or $-1$, depending on whether the reduction of $E$ at $p$ is split or nonsplit multiplicative. 

Fix a prime $\fp$ of $H$ above $p$ and denote $\Phi_{m,\fp}$ the $p$-primary part of the group of connected components of the N\'eron model of $E$ over the completion of $F_m$ at the unique prime ideal above $\fp$. Denote $\Phi_{\infty,\fp}:=\varprojlim_m\Phi_{m,\fp}$ the projective limit with respect to the natural projection maps $\Phi_{m,\fp} \ra \Phi_{m-1,\fp}$.

As explained in Lemma \ref{lem: Phi infty and limit of bar Phi1 + Phi infty and limit of bar Phi2 + rem: 2} there is a canonical isomorphism $\varphi:\Phi_{\infty,\fp}\overset{\cong}{\longrightarrow}\Z_p$ induced by Tate's uniformization. 
Denote by $\bar{\alpha}_m$ the image of $\alpha_m$ in $\Phi_{m,\fp}$ and set $\bar{\alpha}:=(\bar{\alpha}_m)_m\in\Phi_{\infty,\fp}.$ Define the period
$$
\Pi_p := \varphi(\bar{\alpha}) \in \Z_p.
$$

Note that $H^1(K_p,V_f)$ is naturally a $\Gal(K_p/\Q_p)$-module and we let $H^1(K_p,V_f)^\pm$ denote the $\Q_p$-subspace on which $\Frob_p$ acts with eigenvalue $\pm 1$.

The Kolyvagin class $\mathbf{K}$ is not in general cristalline at $p$, that is to say, the local class $\res_p(\mathbf{K}) \in H^1(K_p,V_f)$ is not expected to lie in Bloch-Kato's finite subspace $H^1_{f}(K_p,V_f)$. Nevertheless, one can show (cf.\,Proposition \ref{prop: exp partialp C}) that $\res_p(\mathbf{K})^a := \res_p(\mathbf{K})+a\res_p(\mathbf{K}^{\Frob_p})$ does lie in $H^1_{f}(K_p,V_f)^a$ and therefore there exists a local point
%\footnote{He vuelto a denotar el punto local $Q$ en lugar de $C_p$ porque podia crear confusion. Si es preferible otra letra o si debes hacer cambios en la seccion 5, hazlos para que las notaciones sean coherentes. Puedes borrar esta footnote cuando acabes.}
\begin{equation*}
Q_p^a \in (E(K_p)\otimes \Q_p)^a \quad \mbox{ such that } \quad \delta_p(Q_p^a) = \res_p(\mathbf{K})^a,
\end{equation*}
where $\delta_p: E(K_p)\otimes \Q_p \stackrel{\sim}{\lra} H^1_{f}(K_p,V_f)$ stands for Kummer's isomorphism. In spite of the notation we have chosen, beware that $Q_p^a$ is {\em not} expected to be the $a$-component of any local point $Q_p\in E(K_p)\otimes \Q_p$, precisely because $\res_p(\mathbf{K})$ does not lie in $H_f^1(K_p,V_f)$. In other words, while $\res_p(\mathbf{K})^a$ is cristalline, the local class $\res_p(\mathbf{K})^{-a}$ is not.

%Consider the $G_{\Q_p}$-representation induced by the trivial character of $G_{K_p}$, {\color{red} with the canonical projection $\mathrm{pr}:\Ind_{K_p}^{\Q_p}(1)\longrightarrow\Q_p$.}\footnote{Estas cosas en rojo, aqui, en la prop B y el %teorema C me huelen mal. Lo he explicada en mas detalle en la proposicion \ref{prop: fine}.} The Frobenius at $p$ acts as an involution on $\Ind_{K_p}^{\Q_p}(1)$, and let $\{v^+,v^-\}$ be a basis of eigenvectors with eigenvalues $+1,-1$.

%\begin{propB}[cf.\,Proposition \ref{prop: fine}]
%	Class $C^{\psi_i}$ lies in $\Sel_{(p)}(V_i)$. More precisely,  
%	\begin{equation*}
%	C^{\psi_i}=\dfrac{h\cdot\ord_p(q_E)}{p\cdot {\color{red} \mathrm{pr}(v^{-a_p(E)})}}\varphi(\bar{\alpha})\cdot\xi_i^{-a_p(E)},
%	\end{equation*}
%	where $h:=[H:K]$ and $q_E$ is Tate's uniformisator for $E$.
%\end{propB}
 
%   Finally, we can combine Theorem A and Proposition B to obtain an explicit formula for the $L$-value $I_p(f,g_\alpha,h_\alpha)$ involving Bertolini-Darmon's Kolyvagin class $C$. 
%In the case in which $E$ has \textit{split} multiplicative reduction at $p$, the element of the basis of $\Sel_{(p)}(\Q,V_i)$ appearing in Proposition B is $\xi_i^-$, which by Theorem A is related to the value $I_p(f,g_\alpha,h_\alpha)$ if we choose $%\alpha_h=-1/\alpha_g$ in \eqref{eq: two cases for the eigenvalues of h}. Conversely, if $E$ has \textit{nonsplit} multiplicative reduction at $p$, we get the formula choosing $\alpha_h=1/\alpha_g$ 

Let $c= \prod_{v\nmid N \infty} c_v(\breve{f},\breve{g}_\alpha,\breve{h}_\alpha) \in L$ denote the product of local automorphic factors appearing in \cite[Prop.\,2.1\,(a)\,(iii)]{DLR} associated to the choice of test vectors at $(2,1,1)$,
%; as guaranteed by part (b) of loc.\,cit.\,the test vectors at $(2,1,1)$ can be chosen in such a way that $c\ne 0$, and we assume throughout that such a choice has been made in the definition of $I_p(f,g_\alpha,h_\alpha)$. 
and define the algebraic $L$-value $$L^{\mathrm{alg}}(E\otimes\rho,1) := \dfrac{L(E\otimes\rho,1)}{\pi^4\langle f, f\rangle^2}.$$ 
It follows from the work of Harris and Kudla \cite{HK91} that the above ratio lies in $L$ and is in fact non-zero by \eqref{ass: rank 0}.
The following is the main theorem of this note.
 
\begin{thmA} For any choice of test vectors, we have
\begin{equation*}
I_p(f,g_\alpha,h_\alpha)=
%\dfrac{\sqrt{c_e} 2p(1-1/p)^2}{\ord_p(q_E)}
\dfrac{\sqrt{c} \cdot \sqrt{L^{\mathrm{alg}}(E\otimes\rho,1)}}{\Pi_p \cdot \L_{g_\alpha}} \times \log_p(Q_p^a) \qquad (\mod L^\times)
\end{equation*}
where 
\begin{itemize}
\item $\L_{g_\alpha}\in K_p$ is a period on which $\Frob_p$ acts as multiplication by $-1$ and only depends on $g_\alpha$, 
%\item $Q^\pm\in E(K_p)^\pm$ are local points characterized as  $$\delta_p(Q^{a_p(E)})=\pi^{a_p(E)}\res_p(C)\in\coh^1_f(K_p,V_f)^{a_p(E)}.$$
\item $\log_p:E(K_p)\longrightarrow K_p$ denotes the $p$-adic logarithm.
\end{itemize}
\end{thmA}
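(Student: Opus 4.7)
My approach follows the standard architecture of $p$-adic Gross--Zagier type formulas for triple products, with the main novelty lying in handling the non-cristalline behavior forced by the multiplicative reduction of $E$ at $p$. The argument has three stages.

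First, I would rewrite $I_p(f,g_\alpha,h_\alpha)$ as a Bloch--Kato logarithm of a global cohomology class in $H^1(\Q, V_f \otimes V_g \otimes V_h)$. Starting from the explicit description of $\mathcal{L}_p^g(\breve{\bff},\breve{\bfg}_\alpha,\breve{\bfh}_\alpha)$ as a Garrett--Rankin triple product of Hida families, one specializes at $(2,1,1)$, a point lying outside the region of classical interpolation. Following the method of Darmon--Rotger, the specialization can be computed as a Petersson-type pairing of $\breve f$ with the ordinary projection of $d^{-1}\breve g_\alpha^{[p]} \cdot \breve h_\alpha$, and reinterpreted as the pairing of a global class $\kappa$ against a test differential built from $\eta_f,\omega_{g_\alpha},\omega_{h_\alpha}$.

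Second, I would analyze the local class at $p$ by splitting it into Frobenius eigenspaces. Since $p$ is inert in $K$, the Frobenius $\Frob_p$ on $H^1(K_p,V_f)$ has eigenvalues $\pm 1$, and on $V_g \otimes V_h$ the normalization $\alpha_g\alpha_h = -a$ fixed in \eqref{ah-bh} isolates the line relevant to the iterated integral. By Proposition \ref{prop: exp partialp C}, the projection $\res_p(\mathbf{K})^a$ is cristalline and equals $\delta_p(Q_p^a)$, accounting for the factor $\log_p(Q_p^a)$ in the numerator; the complementary non-cristalline component is measured by the image of $\bar\alpha$ in $\Phi_{\infty,\fp} \cong \Z_p$ via the isomorphism $\varphi$ induced by Tate's uniformization, producing the period $\Pi_p$ in the denominator. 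The auxiliary period $\mathcal{L}_{g_\alpha}$, on which $\Frob_p$ acts as $-1$, enters as the discrepancy between the motivic and automorphic trivializations of the eigenline in $V_g$ attached to the $p$-stabilization $g_\alpha$.

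Third, I would identify the global class $\kappa$ produced in stage one with an explicit multiple of the Kolyvagin class $\mathbf{K}$ via a reciprocity law relating big diagonal classes to systems of Heegner points, in the spirit of Bertolini--Darmon and Castella--Hsieh. The proportionality constant is pinned down by an Ichino / Harris--Kudla type identity expressing the square of the triple Petersson pairing in terms of $L^{\mathrm{alg}}(E\otimes\rho,1)$ and the local zeta integrals $c = \prod_v c_v(\breve f,\breve g_\alpha,\breve h_\alpha)$, which yields the numerator $\sqrt{c\cdot L^{\mathrm{alg}}(E\otimes\rho,1)}$. The hardest step is stage two: the left-hand side $I_p$ is manifestly cristalline-looking (a logarithm paired against a differential), whereas the Kolyvagin class is non-cristalline at $p$. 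Reconciling the two sides requires decomposing $\res_p(\mathbf{K})$ into its $\pm a$ Frobenius components, keeping track of which component pairs with which factor of $V_g \otimes V_h$, and precisely matching the non-cristalline contribution with the image of the Heegner points in the component group through $\varphi_\Tate$ and the canonical map $\varphi\colon \Phi_{\infty,\fp} \xrightarrow{\sim} \Z_p$.
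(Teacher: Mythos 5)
Your proposal captures the general geography — interpolation of the triple product $p$-adic $L$-function, Frobenius eigenspace analysis at the inert prime $p$, and a final comparison with Bertolini--Darmon's Kolyvagin classes — and your Stage 2 is essentially correct in spirit. However, there are two substantial gaps that go to the heart of the argument.

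First, your Stage 1 misses the exceptional zero phenomenon, which is the central technical difficulty. Because the normalization \eqref{ah-bh} forces $\alpha_g\alpha_h = -a$, the Euler factor $\boldsymbol{\cE}_g(2,1,1)$ vanishes, and with it the diagonal class: $\boldsymbol{\kappa}(2,1,1) = 0$. Consequently, the naive reinterpretation of $I_p(f,g_\alpha,h_\alpha)$ as a pairing of a test differential against $\res_p(\boldsymbol{\kappa}(2,1,1))$ would only show $0 = 0$; the $p$-adic $L$-value itself is generically nonzero because the Perrin-Riou regulator develops a compensating pole along the same Euler factor. What resolves this $0/0$ indeterminacy is the \emph{improved} $\Lambda$-adic class $\boldsymbol{\kappa}_g^*$ satisfying $\boldsymbol{\kappa}_{|\cH} = \boldsymbol{\cE}_g\cdot\boldsymbol{\kappa}_g^*$ (Proposition \ref{prop: definition of improved class}, taken from \cite[\S 8.3]{BSV19reciprocity}), together with the improved Perrin-Riou map $\bar{\L}_g^* = \boldsymbol{\cE}_g\cdot\bar{\L}_{g|\cH}$. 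The identity $I_p(f,g_\alpha,h_\alpha) = (1-1/p)\log_{\alpha\beta}(\pi_{\alpha\beta}\boldsymbol{\kappa}_g^*(2,1,1))$ of Proposition \ref{prop: Lpg and kappa*} is a statement about the improved class, not about $\boldsymbol{\kappa}$, and without it Stage 1 collapses.

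Second, Stage 3 invokes a direct ``reciprocity law relating big diagonal classes to systems of Heegner points'' which is neither available in this non-cristalline setting nor what the paper proves. Indeed the introduction explicitly notes that the factorization-of-$p$-adic-$L$-series route of \cite[\S 3]{DLR} is unavailable here because $p$ is inert in $K$. The paper instead sidesteps the need for any such global identification by exploiting Assumption \ref{ass: rank 0}: since $\Sel_p(V) = 0$, Poitou--Tate duality (Lemma \ref{lem: relaxed selmer iso with H1s}) forces $\partial_p\colon \Sel_{(p)}(V)\to\coh^1_s(\Q_p,V)$ to be an isomorphism, so that the four-dimensional relaxed Selmer group acquires the canonical basis $\{\xi^{\heartsuit\triangle}\}$ of Corollary \ref{cor: basis of Sel(p)} determined purely by local dual exponentials. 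Both $\boldsymbol{\kappa}_g^*(2,1,1)$ (Proposition \ref{prop: kappa* and the basis}) and the Shapiro transfers $\mathbf{K}^{\psi_i}$ of the Kolyvagin class (Corollary \ref{prop: fine}) are then pinned down as explicit multiples of basis vectors, and the comparison proceeds through the shared local point of Lemma \ref{lem: tutti uguali H1f} rather than through any global equality of Euler system classes. This is precisely where the rank-$0$ hypothesis does the work that a reciprocity law would otherwise have to do, and omitting it leaves your argument with no mechanism to connect the two classes.
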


\begin{remark} Period $\L_{g_\alpha}$ was first introduced in \cite{DR2b} and is well-defined only up to scalars in $L^\times$; cf.\,\eqref{Lga} for more details.
Let $H_g$ be the number field cut out by the Artin representation attached to the adjoint of $g$ and fix a completion $H_{g,p}$ of $H_g$ at a prime above $p$. In   \cite{DR2b}  it is conjectured that $\L_{g_\alpha} = \log_p(u_{g_\alpha}) \,\, (\mathrm{mod}\,  L^\times)$, where $\log_p:\cO^\times_{H_{g,p}}\otimes L\longrightarrow H_{g,p}\otimes L$   is the usual $p$-adic logarithm and $u_{g_\alpha}\in\cO_{H_g}[1/p]^\times\otimes L$  is the so-called Gross--Stark unit attached to $g_\alpha$ \cite[\S1.2]{DLR}. See \eqref{Lga} for the explicit definition of $\L_{g_\alpha}$.
\end{remark}

The body of the paper is devoted to the proof of this result. In \S \ref{sec: ex appendix} we relate Bloch-Kato's dual exponential map on $E$ to the group of connected components. This is an exercise in $p$-adic Hodge theory which is surely well-known to experts, but we included because we did not find precise references in the literature. We are most grateful to A. Iovita for his assistance in this section. In \S \ref{sec: the selmer group of fxgxh} we review the theory of Hida families and Selmer groups, and culminates with the description of an explicit basis of the relaxed Selmer group associated to the triple $(f,g,h)$. In \S \ref{sec: main thm 1} we exploit the fine work \cite{BSV19reciprocity} of Bertolini, Seveso and Venerucci in order to prove a formula relating the iterated integral $I_p(f,g_\alpha,h_\alpha)$ in terms of the basis described in the previous section. Finally, in the last section we introduce Kolyvagin classes and prove our main result, which is found in Corollary \ref{cor: p inert explicit version nonsplit}.

Theorem A is vaguely similar to the main theorem of \cite[\S 3]{DLR}, although both the statements and specially the proofs are ostensibly different. While in loc.\,cit.\,the motivic elements appearing in the statement (both the diagonal classes and Heegner points) are cristalline at $p$, in our article they are not and this makes the calculations substantially different. Moreover, in \cite[\S 3]{DLR} one makes crucial use of a factorization of $p$-adic $L$-series (which follows rather trivially from a comparison of critical $L$-values), while in our setting the analogous $p$-adic $L$-functions associated to $K$ are not available (because in our scenario $p$ remains inert in $K$). It would be of great interest to investigate whether the recent $p$-adic $L$-functions of Andreatta and Iovita \cite{AI19} could be exploited in order to provide an alternative proof of our formula, although this does not appear to be a straight-forward project.

As we have already pointed out, one of the key ingredients of our proof are the results of Bertolini, Seveso and Venerucci in \cite{BSV19reciprocity}. In loc.\,cit.\,the authors study the exceptional zero phenomena appearing in the scenario of diagonal cycles at weights $(2,1,1)$ when  $\alpha_g \alpha_h = \pm a$. An arithmetic application of these results is obtained in \cite{BSV19balanced} and  \cite{DR19Stark} to the theory of Stark-Heegner points on elliptic curves,  in the setting where $\alpha_g \alpha_h = a$ and an {\em improved} $p$-adic $L$-function plays a relevant role. The present work focus instead in the opposite setting where $\alpha_g \alpha_h = -a$ and it is rather an {\em improved} diagonal cohomology class (cf.\,\cite[\S 8.3]{BSV19reciprocity}  and \S \ref{sec: exceptional cases} of this note) which plays a prominent role.

\section{$p$-adic Hodge theory for elliptic curves with multiplicative reduction}\label{sec: ex appendix}

Let $E/\Q_p$ be an elliptic curve of multiplicative reduction at a prime $p$. Let
\begin{equation*}
\chi_E: G_{\Q_p}\longrightarrow\{ \pm1  \}
\end{equation*}
be the trivial character if $E$ has split multiplicative reduction over $\Q_p$, and  the quadratic unramified character if $E$ has non-split multiplicative reduction. For any $G_{\Q_p}$-module $M$, let $M(\chi_E)$ denote the twist of $M$ by $\chi_E$. 

Tate's uniformisation provides a $G_{\Q_p}$-equivariant isomorphism 
\begin{equation}\label{eq: tate's uniformisation}
\varphi_\Tate:\bar{\Q}_p^\times(\chi_E)/q_E^\Z \overset{\cong}{\longrightarrow} E(\bar{\Q}_p)
\end{equation}
for some $q_E\in p\Z_p$. 
%Denote $\ord_p$ the $p$-adic valuation on $\Z_p$. 
As in \eqref{ass: p doesnt divide valuation of tates uniformizer}, assume throughout that $p\nmid n:=\ord_p(q_E).$

%The  action of $G_{\Q_p}$ on $E(\bar{\Q}_p)$ is then given by
%\begin{equation*}
%\sigma(\varphi_\Tate(x))=\varphi_\Tate(\sigma(x)^{\chi_E(\sigma)}) \qquad \forall  x\in \bar{\Q}_p^\times/q_E^\Z, \ \forall  \sigma\in G_{\Q_p}.
%\end{equation*}
%We will write $\bar{\Q}_p^\times(\chi_E)$ for the $G_{\Q_p}$-module given by $\bar{\Q}_p^\times$ as a group, but on which the action of any $\sigma\in G_{\Q_p}$ on $x\in\bar{\Q}_p^\times$ is given by $\sigma(x)^{\chi_E(\sigma)}$, so that

%For a positive integer $m$, denote $E[p^m]=E({\bar{\Q}_p})[p^m]$ the group of $p^m$-torsion points of $E$ and 

Let $T = T_E:=\varprojlim E[p^m]$ denote the Tate module associated to $E$ and set $V:=T\otimes{\Q_p}$. Let $E_0(\bar{\Q}_p)$ denote the set of points in $E(\bar{\Q}_p)$ that stay nonsingular in the special fiber of $E$ at $p$. The module 
\begin{equation*}
	T^+:=\varprojlim E_0[p^m]
\end{equation*}
fits in an exact sequence of $\Z_p[G_{\Q_p}]$-modules
\begin{equation}\label{eq: filtration of T} 
0\longrightarrow T^+\overset{\iota}{\longrightarrow} T\overset{\pi}{\longrightarrow}T^-\longrightarrow0,
\end{equation}
where $T^-:=T/T^+$. 
The modules $T^+$ and $T^-$ are free over $\Z_p$ of rank one, and Tate's uniformization \eqref{eq: tate's uniformisation} induces $G_{\Q_p}$-equivariant isomorphisms
\begin{equation}\label{eq: explicit Tf+, Tf-}
	\Z_p(\chi_E)(1)\overset{\cong}{\longrightarrow}T^+, \qquad  \Z_p(\chi_E)=\varprojlim_m(\bar{\Q}_p^\times(\chi_E)/q_E^\Z)[p^m]/\mu_{p^m}(\chi_E)\overset{\cong}{\longrightarrow}T^-.
\end{equation}
More precisely, if we fix compatible systems
\begin{equation*}
\tilde{\eps}:=(\tilde{\eps}^{(m)})_m\in\Z_p(\chi_E)(1), \qquad \tilde{q}:=(q_E^{1/p^m})_m\in\varprojlim\left(\bar{\Q}^\times_p/q_E^\Z\right)[p^m]
\end{equation*}
then
\begin{equation*}
\eps:=\varphi_{\Tate}(\tilde{\eps}), \ q:=\varphi_{\Tate}(\tilde{q})
\end{equation*}
form a $\Z_p$-basis of $T$. Moreover, if  $\chi_{\cycl}:G_{\Q}\longrightarrow\Z_p^\times$ denotes the cyclotomic character, then
\begin{enumerate}
	\item $\eps$ is a basis of $T^+$ on which $G_{\Q_p}$ acts as the character $\chi_E\chi_{\cycl}$;
	\item $\bar{q}:=\pi(q)$ is a basis of $T^-$ on which $G_{\Q_p}$ acts via $\chi_E$.
\end{enumerate}

\bigskip

Let $K$ be an imaginary quadratic field in which $p$ is inert and let  $K_p$ denote the completion of $K$ at $p$. Notice that ${\chi_E}_{|G_{K_p}}=1$ and thus 
\eqref{eq: filtration of T} gives an exact sequence of $\Q_p[G_{K_p}]$-modules
\begin{equation}\label{eq: filtration of V of Kp}
0\longrightarrow V^+\overset{\iota}{\longrightarrow}V\overset{\pi}{\longrightarrow}V^-\longrightarrow 0
\end{equation}
such that $\dim_{\Q_p}V^+=\dim_{\Q_p}V^-=1$,  $G_{K_p}$ acts on $V^+$ via $\chi_{\cycl}$, and $G_{K_p}$ acts trivially on $V^-$.

\subsection{The group of connected components}\label{sec: tate module of E and group of connected component}

Fix a ring class field $H$ of $K$ of conductor $c$ prime to $p$ and for every integer $m\geq 1$, let $H(p^m)$ be the ring class field of $K$ of conductor $c\cdot p^m$. The Galois group $\Gal(H(p^m)/H)$ is cyclic of order $e_m:=(p+1)p^{m-1}$. Let $F_m$ be the intermediate field $H\subseteq F_m\subseteq H(p^{m+1})$ such that $\Gal(F_m/H)$ is cyclic of order $p^m$. Since $p$ is inert in $K$, the prime ideal $p\cO_K$ splits completely in $H$. Fix once and for all a prime $\fp$ of $H$ above $p$; it ramifies totally in $H(p^m)$ as $\fp\cO_{F_m}=\fp_{m}^{p^m}$.  Let us denote $F_{m,\fp}$ the completion of $F_m$ at $\fp_{m}$, $\cO_{m,\fp}$ its ring of integers and $\mathbb{F}_{m,\fp}$ its residue field. 

Let $\cE$ be the N\'eron model of $E$ over $\Z_p$, and let $\tilde{\cE}:=\cE\times_{\Z_p}\F_p$ denote its special fiber.   For all $m\geq 1$, let $\Phi_{m,\fp}$ denote the $p$-Sylow subgroup of the group \begin{equation}\label{eq: ex Phimp}
\tilde{\cE}(\F_{m,\fp})/\tilde{\cE}_0(\F_{m,\fp})\cong E(F_{m,\fp})/E_0(F_{m,\fp})
\end{equation} of connected components of the base change of  $\cE$ to  $\cO_{m,\fp}$.

%
%
%
% $\Phi_{m,\fp}$ denote the group of connected components of the base change of  $\cE$ to the fraction field  of $F_{m,\fp}$, i.e.\ 
%\begin{equation}\label{eq: definition of barPhi}
%\Phi_{m,\fp}:=\tilde{\cE}(\F_{m,\fp})/\tilde{\cE}_0(\F_{m,\fp})\cong E(F_{m,\fp})/E_0(F_{m,\fp}).
%\end{equation}
%denote the connected components of the base change of $\cE$ to the ring of Witt vectors of $\bar{\F}_p$.
% and set ${\Phi}_{\infty,\fp}:=\varprojlim_m\Phi_{m,\fp}$.

\begin{lemma}\label{lem: Phi infty and limit of bar Phi1 + Phi infty and limit of bar Phi2 + rem: 2}
There are  isomorphisms of $G_{\Q_p}$-modules
%\footnote{V a F: Este lema sigui siendo demasiado barroco, creo que se puede simplificar hablando solo de lo que realmente necesitamos, no introduciendo notacion para las cosas que no necesitamos. Estas de acuerdo en que nunca utilizamos el grupo finito $\Phi_{\infty,\fp}^{(p)}$? Si lo confirmas, entonces yo definiria  "$\Phi_{m,\fp}$ denote the $p$-Sylow subgroup of the connected components of the base change of  $\cE$ to the fraction field of $F_{m,\fp}$", es decir, ya directamente nos quedamos solo con la parte que es potencia de $p$. Entonces definiria $\Phi_{\infty,\fp} = \varprojlim_m\Phi_{m,\fp}$ y ya podemos borrar directamente la descomposicion \eqref{eq: decomposition of ex Phiinftyp} que me parece que no es necesaria.}
%	\begin{equation}\label{eq: decomposition of ex Phiinftyp}
%	\varprojlim_m\Phi_{m,\fp}=\Phi_{\infty,\fp}^{(p)}\oplus\Phi_{\infty,\fp}.
%	\end{equation}
%	where $\Phi_{\infty,\fp}^{(p)}$ is a finite cyclic group of order prime to $p$ and
	\begin{equation*}\label{eq: the first overline varphi tate}
		T^- \, \, \cong \, \,  \Z_p(\chi_E) \, \, \cong \, \,	\Phi_{\infty,\fp}.
	\end{equation*} 
%Moreover, there is an isomorphism of $G_{\Q_p}$-modules 
%\begin{equation}\label{eq: isomorphism between Phi infty p and bar Phi}
%	\Phi_{\infty,\fp}\cong \varprojlim_m\bar{\Phi}[p^m].
%\end{equation}
%Combining these maps, we get isomorphisms
%		\begin{equation}\label{eq: ex rem: 2}
%	\overline{\varphi}_\Tate:T^-\overset{\cong}{\longrightarrow}\Phi_{\infty,\fp}, \qquad \overline{\varphi}_\Tate:V^-\overset{\cong}{\longrightarrow}\Phi_{\infty,\fp}\otimes\Q_p
%	\end{equation}
%	of $\Z_p[G_{\Q_p}]$-modules and $\Q_p[G_{\Q_p}]$-modules.
\end{lemma}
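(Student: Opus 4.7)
The first isomorphism $T^-\cong\Z_p(\chi_E)$ is already contained in \eqref{eq: explicit Tf+, Tf-}, so the plan is to establish the second isomorphism $\Z_p(\chi_E)\cong\Phi_{\infty,\fp}$ directly via Tate's uniformization.

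First I would compute $\Phi_{m,\fp}$ for each $m\geq 1$. Since $p$ is inert in $K$ and splits completely in $H/K$, the completion $K_p$ is unramified of degree two over $\Q_p$, and the description $\fp\cO_{F_m}=\fp_{m}^{p^m}$ means that $F_{m,\fp}/K_p$ is totally ramified of degree $p^m$; in particular the absolute ramification index equals $p^m$, and $v_{F_{m,\fp}}(q_E)=p^m\cdot n$ with $n:=\ord_p(q_E)$. Tate's uniformization \eqref{eq: tate's uniformisation} identifies $E(F_{m,\fp})$ with $F_{m,\fp}^\times(\chi_E)/q_E^\Z$ and $E_0(F_{m,\fp})$ with the image of $\cO_{F_{m,\fp}}^\times(\chi_E)$; the valuation map then yields a $G_{\Q_p}$-equivariant isomorphism
$$
E(F_{m,\fp})/E_0(F_{m,\fp})\;\cong\;(\Z/p^m n\Z)(\chi_E).
$$
Taking $p$-primary parts and invoking assumption \eqref{ass: p doesnt divide valuation of tates uniformizer} yields $\Phi_{m,\fp}\cong(\Z/p^m\Z)(\chi_E)$.

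Next I would verify that under these identifications the natural transition maps $\Phi_{m,\fp}\to\Phi_{m-1,\fp}$ become the canonical projections $(\Z/p^m\Z)(\chi_E)\twoheadrightarrow(\Z/p^{m-1}\Z)(\chi_E)$. Since $F_{m,\fp}/F_{m-1,\fp}$ is totally ramified of degree $p$, the norm map on Tate-curve quotients preserves normalized valuations (the residue degree being $1$), which matches the standard projection on the level of component groups. Passing to the inverse limit then yields $\Phi_{\infty,\fp}\cong\Z_p(\chi_E)$, and assembling with the first isomorphism delivers the chain $T^-\cong\Z_p(\chi_E)\cong\Phi_{\infty,\fp}$.

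The main point requiring care is Galois equivariance in the non-split multiplicative case, where the twist $\chi_E$ built into \eqref{eq: tate's uniformisation} must descend correctly to the component groups. This reduces to observing that, for the twisted $G_{\Q_p}$-action $\sigma\cdot x=\sigma(x)^{\chi_E(\sigma)}$ on $\bar\Q_p^\times(\chi_E)$, one has $v(\sigma\cdot x)=\chi_E(\sigma)\,v(x)$, so the valuation is $G_{\Q_p}$-equivariant into $\Q(\chi_E)$; this propagates the twist faithfully through each finite level and hence to the inverse limit.
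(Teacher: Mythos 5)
Your proof is correct and follows essentially the same route as the paper's: identify $\Phi_{m,\fp}$ via Tate's uniformization and the normalized valuation with $(\Z/np^m\Z)(\chi_E)$, invoke assumption \eqref{ass: p doesnt divide valuation of tates uniformizer} to extract the $p$-primary part $(\Z/p^m\Z)(\chi_E)$, and pass to the inverse limit. You supply a bit more detail than the paper on two points the paper leaves implicit — the compatibility of the norm-induced transition maps (valid, since residue degree $1$ in the tower $F_{m,\fp}/F_{m-1,\fp}$ means the norm preserves normalized valuation) and the descent of the $\chi_E$-twist to component groups — but the skeleton of the argument is identical.
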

\begin{proof} The first isomorphism is \eqref{eq: explicit Tf+, Tf-}.
	Tate's uniformisation provides a description of the group \eqref{eq: ex Phimp} as
	\begin{equation}\label{eq: iso phi m fp}
	\overline{\varphi}_\Tate: 	F_{m,\fp}^\times/q_E^\Z \cO_{m,\fp}^\times(\chi_E)\cong\left(\Z/\ord_{m,\fp}(q_E)\Z\right)(\chi_E) \, \overset{\cong}{\longrightarrow} \,  E(F_{m,\fp})/E_0(F_{m,\fp}),
	\end{equation}
	where $\ord_{m,\fp}$ is the discrete valuation on $\cO_{m,\fp}$. 
	We have  $p\cO_{F_{m}}=\prod_{\fp_{m}\mid p}\fp_m^{p^m}$, where $\fp_m=(\pi_m)$ is the maximal ideal of $\cO_{m,\fp}$. Hence $q_E=p^n\alpha= \pi_m^{np^m}\alpha'$ where $\alpha,\alpha'\in\cO_{F_{\fp,m}}^\times$, i.e.
	\begin{equation*}
	\ord_{m,\fp}(q_E)=n\cdot p^m.
	\end{equation*}
      Under condition \eqref{ass: p doesnt divide valuation of tates uniformizer}, then \eqref{eq: iso phi m fp} gives the isomorphism 
	\begin{equation*}\label{eq: description of Phimp}
	\overline{\varphi}_\Tate: \left(\Z/np^m\Z\right)(\chi_E)\cong \left(\Z/n\Z\right)(\chi_E) \oplus  \left(\Z/p^m\Z\right)(\chi_E) \, \overset{\cong}{\longrightarrow} \, E(F_{m,\fp})/E_0(F_{m,\fp}). 
	\end{equation*}
	The lemma follows after taking $p$-primary parts and passing to the inverse limit.
	
%	The last isomorphism follows after observing that the $p^m$-torsion of $\bar{\Phi}$ is contained already in $\Phi_{m,\fp}$, as both $\Phi_{m,\fp}[p^m]$ and $\bar{\Phi}[p^m]$ are cyclic of order $p^m$.
	%\begin{equation*}
	%\bar{\Phi}[p^m] \cong \left(\bar{\Q}_p^\times/ q_E^\Z \bar{\Z}_p^\times\right)[p^m](\chi_E)	\end{equation*} 
	%is spanned by $q_E^{1/p^m}$, which belongs to $F_{m,\fp}^\times$.
	%of $\bar{\Phi}[p^m]$. The comparison between these two expressions show that  $p^m$-torsion of $\bar{\Phi}$ is contained in $F_{\fp,m}$, 
	%which gives \eqref{eq: isomorphism between Phi infty p and bar Phi}.
%The last part of the statement follows by combining \eqref{eq: the first overline varphi tate}, \eqref{eq: isomorphism between Phi infty p and bar Phi} and  \eqref{eq: explicit Tf+, Tf-}.	
\end{proof}

\subsection{The $G_{K_p}$-cohomology of $E$}\label{sec: the GKp cohomology of E}

% The aim of this section is the study of the cohomology groups $\coh^1(K_p,V_E)$ and the corresponding Block--Kato modules.
 Let $\BB_\derham=\mathrm{Frac}(\BB_\derham^+)$ denote Fontaine's algebra of de Rham periods. For a de Rham $G_{K_p}$-representation $W$, denote 
 \begin{equation*}
 \Dderham(W):=(W\otimes_{\Q_p}\BB_\derham)^{G_{K_p}}.
 \end{equation*}
 This Dieudonn\'e module comes naturally equipped with a filtration, inherited from the filtration on $\BB_\derham$, i.e.\
 \begin{equation*}
 \fil^j\Dderham(W):=(W\otimes_{\Q_p} \fil^j\BB_\derham)^{G_{K_p}},
 \end{equation*}
 and in particular we have $\fil^0\Dderham(W)=(W\otimes \BB_\derham^+)^{G_{K_p}}$. The inclusion $\alpha:W\longrightarrow W\otimes_{\Q_p} \BB_\derham$ 
 induces in cohomology the map 
 $$
 \alpha_*:\coh^1(K_p,W)\longrightarrow\coh^1(K_p, W\otimes \BB_\derham).
 $$ 
 Define
 \begin{equation*}
 \coh^1_g(K_p,W):=\ker(\alpha_*)\subseteq \coh^1(K_p,W), \qquad \coh^1_s(K_p,W):=\coh^1(K_p,W)/\coh^1_g(K_p,W).
 \end{equation*}
 Similarly, if $\BB_\cris$ denotes the ring of cristalline periods, define 
 \begin{equation*}
 	\coh^1_f(K_p,W):=\ker\left( \coh^1(K_p,W)\longrightarrow \coh^1(K_p,W\otimes\BB_\cris) \right).
 \end{equation*}

%Recall from e.g.\,\cite{Bel} that if $W$ is the representation attached to an elliptic curve defined over $K_p$, or $W=\Q_p(n)$ with $n\in\Z\smallsetminus\{0,1\}$, then
%\begin{equation}\label{efg}
%		\coh^1_e(K_p,W)= \coh^1(K_p,W)=\coh^1_g(K_p,W).
%\end{equation}

Let  $\log_p:1+p\Z_p\longrightarrow p\Z_p$ be the usual $p$-adic logarithm, $\exp_p$ denote its inverse and set  $u:=\exp_p(p) \in 1+p\Z_p$.   Let 
\begin{equation}\label{Artin}
\Art:K_p^\times\longrightarrow G_{K_p}^{\ab}
\end{equation}
denote the Artin map of local class field theory, where $G_{K_p}^{\ab}$ denotes the Galois group of the maximal abelian extension $K_p^{\ab}$ of $K_p$ over $K_p$. Recall that \eqref{Artin} gives an isomorphism once extended to the profinite completion $\widehat{K}_p^\times$ of $K_p^\times$.

Decomposition
\begin{equation*}\label{eq: decomposition Kptimes}
K_p^\times= p^\Z \oplus (1+p\cO_{K_p}) \oplus \left( \cO_{K_p}/p\cO_{K_p} \right)^\times,
\end{equation*}
corresponds via \eqref{Artin}  to
\begin{equation*}
G_{K_p}^{\ab}\cong \Gal(K_p^{\nr}/K_p) \times \Gal(K_{\infty}/K_p) \times \Gal(K_p'/K_p)
\end{equation*}
where $K_p^{\nr}$ is the maximal  unramified extension of $K_p$, $K_{\infty}/K_p$ is a $\Z_p^2$-extension and $K_p'/K_p$ is finite. More precisely, via the Artin map, $p^{\widehat{\Z}}\cong\Gal(K_p^{\nr}/K_p)$.
Fix a prime $\fp$ of $H$ above $p$, and denote
\begin{equation*}
K_{p}(\mu_{p^\infty}):=\underset{\longrightarrow}{\lim} \ K_p(\mu_{p^m}); \qquad H(p^{\infty})_\fp:=\underset{\longrightarrow}{\lim} \ H(p^m)_{\fp_m},
\end{equation*}
where $\fp_m$ is the unique prime of $H(p^m)$ lying above $\fp$. 
%(He borrado esta frase porque creo que no es necesaria y en el caso anticiclotomico creo que Delta tiene cardinal p+1, no p-1.)Then both $\Gal(K_p(\mu_{p^\infty})/K_p)$ and $\Gal(H(p^\infty)/K_p)$ are isomorphic to $\Z_p\oplus\Delta$ where $\Delta\cong\mu_{p-1}$ is finite.
Let  
\begin{equation*}
K_p\subseteq K_{\cycl,p}\subseteq K_p(\mu_{p^\infty}), \qquad K_p\subseteq K_{\anti,p}\subseteq H(p^\infty)_\fp
\end{equation*}
be maximal sub-extensions such that $\Gal(K_{\cycl,p}/K_p)\simeq \Gal(K_{\anti,\fp}/K_p) \simeq \Z_p$. Then 
\begin{equation*}
K_{\infty}=K_{\cycl,p}\cdot K_{\anti,p}.
\end{equation*} 
Fix  an element $u_{\star}\in1+p\cO_{K_p}$ such that $\Frob_p{u}_{\star}=-u_{\star}$ and $\{u,u_{\star}\}$ is a $\Z_p$-basis of $1+p\cO_{K_p}$. Then
\begin{equation*}
\Gal(K_{\infty,\fp}/K_p)\cong\Gamma_{\cycl}\times \Gamma_{\anti}, 
\end{equation*}
where
\begin{enumerate}
	\item  $\Gamma_{\cycl}:= \Gal(K_{\cycl,p}/K_p)$ is generated topologically by $\sigma_{\cycl} := \Art(u)$;
	\item $\Gamma_{\anti} :=\Gal(K_{\anti,p}/K_p)$ is generated topologically by $\sigma_{\anti} := \Art(u_{\star})$.
\end{enumerate}

Set also $\Gamma_{\nr}:=\Gal(K_p^{\nr}/K_p)$. Recall  the cyclotomic character $\chi_{\cycl}: G_{\Q} \ra \Z_p^\times$; by an abuse of notation, we continue to denote $\chi_{\cycl}: \Gamma_{\cycl} \lra 1+p\Z_p$ its restriction to $\Gamma_{\cycl}$.
Let also $\chi_{\anti}$ and $\chi_{\nr}$ denote the characters of $\Gamma_{\anti}$ and $\Gamma_{\nr}$ such that  $$\chi_{\anti}(\sigma_\anti)= u, \quad \chi_{\nr}(\Frob_p) = u$$
respectively.

Note that $$\{   {\xi}_{\nr} := \log_p(\chi_{\nr}), {\xi}_{\cycl} := \log_p(\chi_{\cycl}), {\xi}_{\anti} := \log_p (\chi_{\anti}) \}$$
is naturally a $\Q_p$-basis of $\hom_{\cont}(G_{K_p}^{\ab},{\Q_p})$. After composing with the isomorphism  $\Q_p\overset{\overline{\varphi}_\Tate}{\longrightarrow} V_{f|G_{K_p}}^-$ provided by Tate's uniformization this further yields a $\Q_p$-basis
%$$\{	{\xi}_{\cycl},  {\xi}_{\anti},  {\xi}_{\nr}\} \quad \mbox{ of } \,\, \coh^1(K_p,V^-),$$
of $\coh^1(K_p,V^-)$, which in turn may be regarded as a basis of  $\coh^1(K_p,\Phi_{\infty,\fp}\otimes\Q_p)$ by means of 
Lemma \ref{lem: Phi infty and limit of bar Phi1 + Phi infty and limit of bar Phi2 + rem: 2}.

For a class $\xi\in \coh^1(K_p,V^-)$,  we denote  $\bar{\xi}$  its image in the singular quotient $\coh^1_s(K_p,V^-)$. 

\begin{lemma}\label{lemma: 2}

		 $\{\bar{\xi}_{\cycl}, \bar{\xi}_{\anti}\}$ is a $\Q_p$-basis for $\coh^1_s(K_p,V^-)$.

\end{lemma}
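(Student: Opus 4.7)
The plan is to show that $\coh^1_g(K_p,V^-)=\Q_p\cdot\xi_\nr$; the lemma then follows at once. Since $p$ is inert in $K$, the character $\chi_E$ restricts trivially to $G_{K_p}$, so by \eqref{eq: explicit Tf+, Tf-} the quotient $V^-$ is the trivial representation $\Q_p$. Thus $\coh^1(K_p,V^-)\cong\hom_{\cont}(G_{K_p}^{\ab},\Q_p)$ has $\Q_p$-dimension three, with basis $\{\xi_\nr,\xi_\cycl,\xi_\anti\}$ as exhibited just before the lemma. Granted that $\coh^1_g(K_p,V^-)=\Q_p\cdot\xi_\nr$, the quotient $\coh^1_s(K_p,V^-)$ is two-dimensional and contains $\bar\xi_\cycl,\bar\xi_\anti$; their $\Q_p$-linear independence is immediate, since any relation $a\xi_\cycl+b\xi_\anti=c\xi_\nr$ in $\coh^1(K_p,V^-)$ forces $a=b=0$ upon evaluation on the topological generators $\sigma_\cycl\in\Gamma_\cycl$ and $\sigma_\anti\in\Gamma_\anti$, on which $\xi_\nr$ vanishes.

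The containment $\xi_\nr\in\coh^1_g(K_p,V^-)$ is the easy half. As $\xi_\nr$ is unramified, the corresponding self-extension of $V^-\cong\Q_p$ is an unramified two-dimensional representation of $G_{K_p}$; an unramified representation with Hodge--Tate weights all equal to zero is automatically crystalline, hence de Rham, so $\xi_\nr\in\coh^1_f(K_p,V^-)\subseteq\coh^1_g(K_p,V^-)$.

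The core step is the dimension bound $\dim_{\Q_p}\coh^1_g(K_p,V^-)=1$, which I would deduce from local Tate duality. Using that $V^{-,\ast}(1)\cong\Q_p(1)$, the perfect pairing
$$\coh^1(K_p,V^-)\times\coh^1(K_p,\Q_p(1))\longrightarrow \coh^2(K_p,\Q_p(1))\cong\Q_p$$
identifies $\coh^1_g(K_p,V^-)$ with the annihilator of the Bloch--Kato subspace $\coh^1_e(K_p,\Q_p(1))$. In our situation $\coh^1_e(K_p,\Q_p(1))$ coincides with $\coh^1_f(K_p,\Q_p(1))$, because $\Dcris(\Q_p(1))^{\varphi=1}=0$ and $\coh^0(K_p,\Q_p(1))=0$ together force the relevant Bloch--Kato exponential sequence to degenerate. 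Kummer theory identifies $\coh^1_f(K_p,\Q_p(1))$ with $\cO_{K_p}^\times\hat\otimes\Q_p$, of $\Q_p$-dimension $[K_p:\Q_p]=2$, so $\dim_{\Q_p}\coh^1_g(K_p,V^-)=3-2=1$. The main subtle point is the invocation of Bloch--Kato local duality to identify $\coh^1_g^\perp$ with $\coh^1_e$; this spares us from ever having to compute $\exp^\ast$ explicitly on the ramified classes $\xi_\cycl$ and $\xi_\anti$.
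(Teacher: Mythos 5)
Your proof is correct, but it takes a noticeably longer route than the paper's. The paper simply observes that for the trivial $G_{K_p}$-representation $V^-\cong\Q_p$ the finite (equivalently, geometric) subspace $\coh^1_f(K_p,\Q_p)=\coh^1_g(K_p,\Q_p)$ is the subspace of unramified cocycles $\hom_{\cont}(\Gamma_{\nr},\Q_p)$, and is hence spanned by $\xi_\nr$; the statement then follows immediately from the three-dimensionality of $\coh^1(K_p,\Q_p)$ and the fact that $\xi_\cycl,\xi_\anti$ are supported on the other two $\Z_p$-factors of $G_{K_p}^{\ab}$. You instead derive the dimension of $\coh^1_g$ via local Tate duality, identifying its annihilator with $\coh^1_e(K_p,\Q_p(1))$, computing the latter using the vanishing of $\Dcris(\Q_p(1))^{\varphi=1}$ and Kummer theory, and verifying separately that $\xi_\nr$ lies in $\coh^1_g$ by the crystallinity of unramified representations. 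Both arguments are sound; yours has the advantage of being systematic and self-contained about why $\dim\coh^1_g=1$, while the paper's is sharper because for the trivial representation there is nothing to dualize: the identification $\coh^1_f(K_p,\Q_p)=\coh^1_\nr(K_p,\Q_p)$ is already a defining feature of the Bloch--Kato local conditions. One could tighten your write-up slightly: the observation in your second paragraph that the unramified self-extension of $\Q_p$ is crystalline actually shows directly that $\coh^1_f(K_p,\Q_p)\supseteq\coh^1_\nr(K_p,\Q_p)$, and the reverse inclusion is also elementary (crystalline classes for a representation with Hodge--Tate weight $0$ are unramified), so the whole duality detour can be bypassed.
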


\begin{proof}
By definition, the submodule $\coh^1_g(K_p,{\Q_p})=\coh^1_f(K_p,{\Q_p})$ of $\coh^1(K_p,{\Q_p})=\hom_{\cont}(G_{K_p},\Q_p)$ is given by $\coh^1_f(K_p,{\Q_p}(\chi_E))\cong\hom_{\cont}(\Gamma_{\nr},{\Q_p})$ and is generated by $\xi_{\nr}$. The lemma follows because $\coh^1_s(K_p,{\Q_p}) = \coh^1_s(K_p,V^-) = \coh^1(K_p,V^-)/\coh^1_f(K_p,V^-)$.
\end{proof}

Write the uniformizer of the elliptic curve $E/\Q_p$ as
\begin{equation}\label{eq: notation for qE}
q_E= p^nu^sx\in p\Z_p, \qquad n\geq1, s\in\Z_p, x\in\mu_{p-1},
\end{equation}
so that $n=\ord_p(q_E)$ and $ps=\log_p(q_E)$.
Let
\begin{equation*}\label{eq: projection Vf Vf- in cohomology}
\pi_*:\coh^1(K_p,V)\longrightarrow\coh^1(K_p,V^-)	
\end{equation*} be the morphism induced in $G_{K_p}$-cohomology by the projection $\pi:V\longrightarrow V^-$.

%Note that $\coh^1_f(K_p,\Q_p(\chi_{E})(1))=\coh^1_f(K_p,\Q_p(1))(\chi_{E})$ because $\chi_{E}$ is trivial when restricted to $G_{K_p}$. 
%$\Q_p(\chi_E)(1)\cong V^+$ by \eqref{eq: explicit Tf+, Tf-} and $E(K_p)\otimes\Q_p \cong \cO_{K_p}^\times(\chi_E)\otimes\Q_p$ by 

For a $\Gal(K_p/\Q_p)$-module $M$, let us write  $M^\pm$ for the eigenspace on which $\Frob_p$ acts as $\pm1$. Set $a:=a_p(E)$ as in the introduction.
%	\begin{equation*}
%	=\begin{cases}
%	+1 & \text{ if $E$ has split multiplicative reduction at $p$};\\
%	-1 & \text{ if $E$ has split multiplicative reduction at $p$}.
%	\end{cases}
%	\end{equation*}

\begin{proposition}\label{prop:1}

	Let 
	$x_\cycl,x_\anti\in\coh^1(K_p,V)$  be elements such that 
	\begin{equation}\label{eq: image of xcyc e xant}
	\pi_*(x_\cycl)=n\xi_{\cycl}-s\xi_{\nr}, \qquad \pi_*(x_\anti)=\xi_{\anti} \, \mbox{ in } \, \coh^1(K_p,V^-).
	\end{equation}
	%and let $\bar{x}_\cycl, \bar{x}_\anti$ be the corresponding images in  $\coh^1_s(K_p,V)$.
	Then $\{\bar{x}_\cycl,\bar{x}_\anti\}$ is a $\Q_p$-basis for $\coh^1_s(K_p,V)$ and $\pi_*$ descends to an isomorphism 
		\begin{equation*}
		\bar{\pi}_*:\coh^1_s(K_p,V)\overset{\cong}{\longrightarrow}\coh^1_s(K_p,V^-), \qquad \bar{x}_\cycl\mapsto n\cdot\bar{\xi}_{\cycl}, \qquad  \bar{x}_\anti\mapsto\bar{\xi}_{\anti}.
		\end{equation*}
Moreover, the Frobenius eigenspaces $\coh^1_s(K_p,V)^{a}$  and $\coh^1_s(K_p,V)^{-a}$ are generated respectively by $\bar{x}_{\cycl}$ and $\bar{x}_{\anti}$.	
\end{proposition}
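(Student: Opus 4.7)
The plan is to extract Proposition \ref{prop:1} from the long exact sequence in $G_{K_p}$-cohomology attached to \eqref{eq: filtration of V of Kp},
\[
\coh^0(K_p,V^-)\xrightarrow{\partial}\coh^1(K_p,V^+)\xrightarrow{\iota_*}\coh^1(K_p,V)\xrightarrow{\pi_*}\coh^1(K_p,V^-)\xrightarrow{\delta}\coh^2(K_p,V^+).
\]
Since $V^-|_{G_{K_p}}$ is trivial and $V^+|_{G_{K_p}}\cong\Q_p(1)$, local Tate duality and the local Euler characteristic formula yield $\dim\coh^0(K_p,V^-)=\dim\coh^2(K_p,V^+)=1$, $\dim\coh^1(K_p,V^\pm)=3$ and $\dim\coh^1(K_p,V)=4$. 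Combined with $\dim\coh^1_f(K_p,V^-)=1$ (generated by $\xi_\nr$) and $\dim\coh^1_f(K_p,V)=\dim E(K_p)\otimes\Q_p=2$, this shows that both $\coh^1_s(K_p,V)$ and $\coh^1_s(K_p,V^-)$ have dimension $2$.

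The heart of the argument is the computation of $\delta$. The image of $\partial$ is generated by the extension class of \eqref{eq: filtration of V of Kp}, and Tate's uniformization identifies this class with $q_E\otimes 1\in K_p^\times\otimes\Q_p\cong\coh^1(K_p,V^+)$. By the standard duality between connecting maps and cup products, $\delta$ is cup product with $q_E$; local class field theory then recasts this as
\[
\delta(\xi)=\xi(\Art_{K_p}(q_E))\qquad\text{for }\xi\in\hom_\cont(G_{K_p}^\ab,\Q_p).
\]
Using decomposition \eqref{eq: notation for qE} together with the conventions of \S\ref{sec: the GKp cohomology of E}, $\Art_{K_p}(q_E)$ equals $\Frob_p^n\sigma_\cycl^s$ modulo the torsion part of $G_{K_p}^\ab$, whence $\delta(\xi_\nr)=np$, $\delta(\xi_\cycl)=sp$ and $\delta(\xi_\anti)=0$. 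Therefore $\ker\delta=\Q_p(n\xi_\cycl-s\xi_\nr)\oplus\Q_p\,\xi_\anti$, which by exactness equals $\im\pi_*$; this proves the existence of classes $x_\cycl,x_\anti$ satisfying \eqref{eq: image of xcyc e xant}.

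It then remains to identify $\coh^1_f(K_p,V)$ with $\ker\pi_*=\im\iota_*$. For $P\in E(K_p)$ with Tate representative $y\in K_p^\times$, any $p^m$-th root $\tilde y\in\bar K_p^\times$ satisfies $\sigma\tilde y/\tilde y\in\mu_{p^m}\subset E[p^m]$, so the Kummer cocycle $\delta_p(P)$ factors through $\mu_{p^\infty}\subset V^+$, i.e.\ lies in $\im\iota_*$. Hence $\coh^1_f(K_p,V)\subseteq\ker\pi_*$, and the dimension count forces equality. Consequently $\pi_*$ descends to $\bar\pi_*:\coh^1_s(K_p,V)\to\coh^1_s(K_p,V^-)$, and since $\im\pi_*\cap\Q_p\xi_\nr=0$ this map is injective; by dimension it is an isomorphism, sending $\bar x_\cycl\mapsto n\bar\xi_\cycl$ and $\bar x_\anti\mapsto\bar\xi_\anti$, so $\{\bar x_\cycl,\bar x_\anti\}$ is a basis of $\coh^1_s(K_p,V)$. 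The Frobenius assertion follows because $\bar\pi_*$ is $\Gal(K_p/\Q_p)$-equivariant and a direct computation using $\Frob_p(u)=u$, $\Frob_p(u_\star)=u_\star^{-1}$ and the fact that $\Frob_p$ acts on $V^-$ as $a$ shows that $\bar\xi_\cycl$ and $\bar\xi_\anti$ lie in the $a$- and $(-a)$-eigenspaces of $\coh^1_s(K_p,V^-)$ respectively; the result transports via $\bar\pi_*^{-1}$ to $\bar x_\cycl,\bar x_\anti$.

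The main obstacle is the precise computation of $\delta$: identifying the extension class of $V$ with $q_E$ via Tate's uniformization and then translating cup product into Artin reciprocity with the correct signs encodes all of the arithmetic content. Once this is in hand, the remaining steps are linear algebra combined with the functoriality of the Bloch--Kato local conditions.
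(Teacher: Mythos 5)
Your proof follows the same strategy as the paper: extract the connecting map from the long exact sequence attached to \eqref{eq: filtration of V of Kp}, identify it via cup product with $\delta_p(q_E)$ and Artin reciprocity, compute its values on $\xi_\nr,\xi_\cycl,\xi_\anti$ to get $np,sp,0$, and then match up the finite subspaces so that $\pi_*$ descends to an isomorphism on singular quotients; the Frobenius eigenspace identification is likewise obtained by the conjugation formula $\sigma_\anti\mapsto\sigma_\anti^{-1}$. Your argument that $\coh^1_f(K_p,V)=\ker\pi_*$ goes through Kummer theory directly, whereas the paper invokes the previously established identification $\iota_*\colon\coh^1_f(K_p,V^+)\cong\coh^1_f(K_p,V)$, but these are two phrasings of the same computation, so there is no genuine divergence.
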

\begin{proof}
	Consider  the long exact sequence 
	\begin{equation}\label{eq: les}
	0\rightarrow V^-\overset{\partial^0}{\longrightarrow}\coh^1(K_p,V^+)\overset{\iota_*}{\longrightarrow}\coh^1(K_p,V)\overset{\pi_*}{\longrightarrow}\coh^1(K_p,V^-)\overset{\partial^1}{\longrightarrow}\coh^2(K_p,V^+)\rightarrow0
	\end{equation}
	induced in $G_{K_p}$-cohomology by \eqref{eq: filtration of V of Kp}. The  connecting homomophisms $\partial^0,\partial^1$ can be expressed in terms of $q_E$ as follows.
	Cup product and the trace map give a pairing
	\begin{equation*}
		\langle \ , \ \rangle: \coh^1(K_p,V^+)\times\coh^1(K_p,V^-)\longrightarrow\coh^2(K_p,V^+)\cong\Q_p
	\end{equation*}
satisfying
\begin{equation*}
	\langle \delta_p(q_E),\xi\rangle =\xi(\Art(q_E))
\end{equation*}
for all $\xi\in\coh^1(K_p,V^-)$. 
%Similarly as in \eqref{prop: H1f Kp, E}, here $\delta_p:K_p^\times(\chi_E)\otimes\Q_p\longrightarrow\coh^1(K_p,V^+)$ is the Kummer map. 
If we still call $\partial^1$ its composition with  $\coh^2(K_p,V^+)\cong \Q_p$, then it 
coincides with the map $\langle \delta_p(q_E), \cdot \rangle$. Using the notation of \eqref{eq: notation for qE},  we have $\Art(q_E)=\Frob_p^n\sigma_{\cycl}^{s}$ and thus
	\begin{enumerate}
		\item $\partial^1(\xi_{\cycl})=\xi_{\cycl}(\Art(q_E))=\xi_{\cycl}(\sigma_{\cycl}^{s})=\log_p(\chi_{\cycl}(\sigma_{\cycl}^{s})=s\log_p(u)=sp;$
		\item 	$\partial^1(\xi_{\anti})=\xi_{\anti}(\Art(q_E))=\log_p(\chi_{\anti}(\sigma_{\anti}^0))=0;$
		\item $	\partial^1(\xi_{\nr})=\xi_{\nr}(\Art(q_E))=\xi_{\nr}(\Frob_p^n)=\log_p(\chi_{\nr}(\sigma_{\cycl})^n)=n\log_p(u)=np.$
	\end{enumerate}

Combining these computations with the exactness of \eqref{eq: les} we deduce that the image of $\pi_*$ is generated by $\{ n\cdot\xi_{\cycl}-s\cdot\xi_{\nr}, \xi_{\anti}  \}$, and this gives the existence of $x_\cycl,x_\anti$ as in the statement. 
%Consider now the map $\bar{\pi}_*:\coh^1_s(\Q_p,V)\longrightarrow\coh^1_s(\Q_p,V^-)$ induced by $\pi_*$ on the singular quotients. 

From \eqref{eq: image of xcyc e xant} and Lemma \ref{lemma: 2} we deduce that $\bar{\pi}_*(\bar{x}_\cycl)= n\cdot\bar{\xi}_{\cycl},   \bar{\pi}_*(\bar{x}_\anti)=\bar{\xi}_{\anti} $ and that $\bar{\pi}_*$ is surjective. 
The map $\iota_*$ of \eqref{eq: les} restricts to an isomorphism $\iota_*:\coh^1_f(K_p,V^+)\cong\coh^1_f(K_p,V)$, hence $\bar{\pi}_*$ is also injective.

Finally, in order to understand the action of Frobenius, note that $\coh^1_s(K_p,V)\simeq \coh^1_s(K_p,V^-)$ is naturally a $\Gal(K_p/\Q_p)$-module.  As explained in the proof of Lemma \ref{lemma: 2}, $\{\xi_{\cycl},\xi_{\anti}\}$  is a basis of
\begin{equation}\label{eq: bla}
\coh^1_s(K_p,V^-)\cong  \hom(\Gamma_{\cycl},\Q_p(\chi_E))\oplus \hom(\Gamma_{\anti},\Q_p(\chi_E)).
\end{equation} 

We have
\begin{equation*}
\xi_{\anti}^{\Frob_p}(\sigma_{\anti})=\chi_{E}(\Frob_p)\cdot\xi_{\anti}(\Frob_p\sigma_{\anti}\Frob_p^\inv)=a\cdot\xi_{\anti}(\sigma_{\anti}^\inv)=-a\cdot\xi_{\anti}(\sigma_{\anti})
\end{equation*}
and via  \eqref{eq: bla} it follows that 
\begin{equation*}
\coh^1_s(K_p,V^-)^{-a}\cong\hom(\Gamma_{\anti},\Q_p(\chi_E))
\end{equation*}
is generated by $\bar{\xi}_{\anti}$.
Analogously, $\bar{\xi}_{\cycl}$ generates $	\coh^1_s(K_p,V^-)^{a}\cong\hom(\Gamma_{\cycl},\Q_p(\chi_E)) $.

\end{proof}

\subsection{Bloch--Kato logarithm and dual exponential maps}

%For a $K_p[G_{\Q_p}]$-de Rham representation $W$, let 
%\begin{equation*}
%	\log:\coh^1_e(K_p,W)\longrightarrow\Dderham(W)/\fil^0\Dderham(W)
%\end{equation*}
%denote the Bloch-Kato logarithm. We want to describe more explicitly this logarithm in the setting of this section. Tate's uniformisation \eqref{eq: tate's uniformisation} induces an isomorphism of $G_{\Q_p}$-modules
%\begin{equation*}
%\varphi_{\Tate}:\left(K_p^\times/q_E^\Z\right)(\chi_E)\overset{\cong}{\longrightarrow} E(K_p).
%\end{equation*}

We wish to describe explicitly the Bloch--Kato logarithm and the dual exponential maps relative to the representation $V = V_f$ regarded as a $G_{K_p}$-module. First we need to study $\Dderham(V_{{K_p}})$, $\Dderham(V_{{K_p}}^\pm)$ and their filtration, where $V_{{K_p}}:=V_{|G_{K_p}}$.
Let $\C_p$ be the completion of a fixed algebraic closure of $\Q_p$, and let
\begin{equation*} 
\cR:=\varprojlim_{x\mapsto x^p}\cO_{\C_p}/p\cO_{\C_p}.
\end{equation*}

Let $W(\cR)$ be the ring of Witt vectors of $\cR$ and denote $[\cdot]:\cR\longrightarrow W(\cR)$ the Teichmuller lift.
There is an isomorphism
\begin{equation*}
\underset{x\mapsto x^p}{\varprojlim}\cO_{\C_p}\overset{\cong}{\longrightarrow}\cR, \qquad (x^{(n)})_n\mapsto (x_n:= x^{(n)} \mod p)_n. 
\end{equation*}

Recall the basis $\tilde{\eps}:=(\tilde{\eps}^{(m)})_m$ of $\Z_p(1)$ we introduced in \S\ref{sec: tate module of E and group of connected component}. It can be regarded as an element of $\lim_{x\mapsto x^p}\cO_{\C_p}\cong\cR$, and if $\log_{q_E}$ denotes the branch of the $p$-adic logarithm such that $\log_{q_E}(q_E)=0$, then
\begin{equation*}
t:=\log_{q_E}([\tilde{\eps}])
\end{equation*}
generates the maximal ideal $\fil^1\BB_\derham$ of $\BB_\derham^+=\fil^0\BB_\derham$. Analogously, we can regard $\tilde{q}$ as an element of $\cR$ and we define
\begin{equation*}
q_E^\#:=\log_{q_E}([\tilde{q}]/q_E)\in\fil^1\BB_\derham
\end{equation*}
Recall that the images $\eps,q$ of $\tilde{\eps}, \tilde{q}$ under Tate's uniformization form a basis of $V$. By elementary calculations one obtains the following lemma.
% and the short the exact sequence
%\begin{equation*}
%0\longrightarrow V^+\overset{\iota}{\longrightarrow} V\overset{\pi}{\longrightarrow}V^-\longrightarrow 0.
%\end{equation*}

\begin{lemma}\label{lem: n-1}
	Let 
	\begin{equation*}
	e_1:=q\otimes 1 - \eps\otimes\dfrac{q_E^\#}{t} \in V\otimes\fil^{0}\BB_\derham, \quad e_2:=\eps\otimes\dfrac{1}{t} \quad \in V\otimes\fil^{-1}\BB_\derham. 
	\end{equation*}
	These elements are invariant with respect to the action of $G_{K_p}$, they form a $K_p$-basis of $\Dderham(V_{K_p})$ and  $e_1$ is a $K_p$-basis of $\fil^0\Dderham(V_{{K_p}})$. Moreover, there are  isomorphisms
	\begin{equation*}\label{eq: iso fil0Vf and fil0Vf+}
	\iota_*:\Dderham(V_{{K_p}}^+)/\fil^0\Dderham(V_{{K_p}}^+)\overset{\cong}{\longrightarrow}\Dderham(V_{{K_p}})/\fil^0\Dderham(V_{{K_p}}), \ \quad \  \eps\otimes t^{-1}\mapsto e_2;
	\end{equation*}
	\begin{equation*}\label{eq: iso fil0Vf and fil0Vf-}
	\pi_*:\fil^0\Dderham(V_{{K_p}})\overset{\cong}{\longrightarrow}\fil^0\Dderham(V_{{K_p}}^-), \ \qquad \  e_1\mapsto \bar{q},
	\end{equation*}
	where, with a slight abuse of notation,  $\bar{q}:=\bar{q}\otimes 1$ in $(V^-\otimes \BB_\derham^+)^{G_{K_p}}=	\fil^0\Dderham(V^-)$.
\end{lemma}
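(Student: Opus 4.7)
The plan is to verify each of the four assertions by a direct calculation, using Kummer-theoretic formulas for the Galois action on $\tilde\eps$ and $\tilde q$, together with exactness of the functor $\Dderham(-)$ on de Rham representations applied to the sequence \eqref{eq: filtration of V of Kp}.

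First I would record the precise Galois action. For $\sigma\in G_{K_p}$ the relation $\sigma(\tilde\eps)=\tilde\eps^{\chi_{\cycl}(\sigma)}$ gives $\sigma(t)=\chi_{\cycl}(\sigma)\,t$, and Kummer theory for Tate's uniformisation gives a cocycle $c:G_{K_p}\to\Z_p$ with $\sigma(\tilde q)=\tilde\eps^{c(\sigma)}\tilde q$; passing through $\varphi_\Tate$ this is exactly the cocycle describing the extension \eqref{eq: filtration of V of Kp}, so $\sigma(q)=q+c(\sigma)\eps$ and $\sigma(q_E^\#)=q_E^\#+c(\sigma)\,t$. The invariance of $e_2$ is immediate from $\sigma(\eps\otimes t^{-1})=\chi_{\cycl}(\sigma)\eps\otimes\chi_{\cycl}(\sigma)^{-1}t^{-1}$. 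For $e_1$ the contributions of $c(\sigma)$ produced by $\sigma(q)$ and by $\sigma(q_E^\#)/\sigma(t)$ cancel, which is exactly the identity that justifies introducing the correction term $-\eps\otimes q_E^\#/t$.

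Next I would assemble the de Rham picture. Since $V^+\cong\Q_p(1)$ and $V^-\cong\Q_p$ as $G_{K_p}$-modules (using $\chi_E|_{G_{K_p}}=1$), both are de Rham, hence so is $V$, and $\Dderham$ being exact gives
\begin{equation*}
0\longrightarrow\Dderham(V^+_{K_p})\longrightarrow\Dderham(V_{K_p})\longrightarrow\Dderham(V^-_{K_p})\longrightarrow 0,
\end{equation*}
with $\Dderham(V^+_{K_p})=K_p\cdot(\eps\otimes t^{-1})$ concentrated in filtration degree $-1$ and $\Dderham(V^-_{K_p})=K_p\cdot\bar q$ concentrated in filtration degree $0$. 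Dimension count then forces $\{e_1,e_2\}$ to be a $K_p$-basis of $\Dderham(V_{K_p})$, once I have shown the two vectors are linearly independent and invariant. Linear independence is transparent since the $\BB_\derham$-coefficients $(1,-q_E^\#/t)$ and $(0,t^{-1})$ are clearly so. For the Hodge filtration, $e_1$ lies in $\fil^0$ because $q_E^\#/t\in\fil^0\BB_\derham$, while $e_2$ does not, and because $\dim_{K_p}\fil^0\Dderham(V_{K_p})=1$ (the cotangent space of the elliptic curve) the line $K_p\cdot e_1$ exhausts it.

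Finally, the two isomorphisms drop out of the same exact sequence together with the computed filtration degrees. The map $\iota_\ast:\Dderham(V^+_{K_p})/\fil^0\to\Dderham(V_{K_p})/\fil^0$ is an iso of one-dimensional $K_p$-spaces because $\fil^0\Dderham(V^+_{K_p})=0$ and $\fil^0\Dderham(V^-_{K_p})=\Dderham(V^-_{K_p})$; by construction it sends the class of $\eps\otimes t^{-1}$ to the class of $\iota(\eps)\otimes t^{-1}=e_2$. Dually, $\pi_\ast$ sends $\fil^0\Dderham(V_{K_p})$ bijectively onto $\fil^0\Dderham(V^-_{K_p})$ because $\fil^0\Dderham(V^+_{K_p})=0$, and $\pi_\ast(e_1)=\pi(q)\otimes 1-\pi(\eps)\otimes q_E^\#/t=\bar q$ since $\pi(\eps)=0$. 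The main (modest) obstacle is the Galois-invariance calculation for $e_1$: one must be careful that the Kummer cocycle $c$ appearing in $\sigma(q)=q+c(\sigma)\eps$ is genuinely the \emph{same} cocycle appearing in $\sigma(q_E^\#)=q_E^\#+c(\sigma)t$, which is a consequence of taking the logarithm of the Teichm\"uller lift of $\tilde q/q_E$ and is what makes Fontaine's period $q_E^\#$ the right correction term.
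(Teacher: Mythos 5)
Your proof is correct and spells out precisely the ``elementary calculations'' the paper alludes to without writing them down; the key identity is indeed the cancellation of the Kummer cocycle $c(\sigma)$ between $\sigma(q)=q+c(\sigma)\eps$ and $\sigma(q_E^\#)=q_E^\#+c(\sigma)t$, and you have identified this correctly.

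One small logical hiccup worth tidying: the inference ``$V^+$ and $V^-$ are de Rham, hence so is $V$'' is not valid in general for extensions of $p$-adic representations (non-de-Rham extensions of $\Q_p$ by $\Q_p(1)$ exist). In your situation this is harmless because the explicit invariant elements $e_1,e_2$ already give $\dim_{K_p}\Dderham(V_{K_p})\geq 2$, which forces equality and hence $V$ is de Rham; but you should present that as the reason rather than appealing to a closure property of de Rham representations under extensions. Likewise, the claim $\dim_{K_p}\fil^0\Dderham(V_{K_p})=1$ deserves one more line: an element $ae_1+be_2$ has $\eps$-coefficient $-a\,q_E^\#/t+b\,t^{-1}$, which lies in $\BB_\derham^+$ only if $b=0$, so $\fil^0\Dderham(V_{K_p})=K_p\cdot e_1$ directly, without appeal to the cotangent space. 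With those two repairs the argument is complete and matches the approach intended by the authors.
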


 Kummer theory provides isomorphisms
 \begin{equation*}\label{eq: classical kummer with H1f}
 	\delta_p:\cO_{K_p}^\times\otimes\Q_p\overset{\cong}{\longrightarrow}\coh^1_f(K_p,\Q_p(1)), \quad E(K_p)\otimes\Q_p \overset{\cong}{\longrightarrow}\coh^1_f(K_p,V),
 \end{equation*}
 which combined with \eqref{eq: explicit Tf+, Tf-} and Tate's uniformization 
% \begin{equation*}
 %	\varphi_\Tate:K_p^\times(\chi_E)/q_E^{\Z}\overset{\cong}{\longrightarrow} E(K_p),
% \end{equation*}
 yield canonical identifications of $\Q_p[G_{\Q_p}]$-modules
\begin{equation} \label{prop: H1f Kp, E}
 \cO_{K_p}^\times(\chi_E)\otimes\Q_p \, \cong \, \coh^1_f(K_p,V^+) \, \overset{\iota_*}{\cong} \, \coh^1_f(K_p,V) =  E(K_p)\otimes\Q_p.
 \end{equation}

Besides, the $p$-adic logarithm induces a morphism of $\Q_p[G_{\Q_p}]$-modules
\begin{equation*}\label{eq: log bk OK}
\log_{p}:\cO_{K_p}^\times(\chi_E)\otimes\Q_p\longrightarrow K_p(\chi_E).
\end{equation*} 
Via the identifications \eqref{prop: H1f Kp, E}, it gives rise to the map
\begin{align}\label{eq: log bk E}
\log_{E}\colon E(K_p)\longrightarrow K_p(\chi_E),
\end{align}
which is the formal group logarithm on $E$ associated with the invariant differential $\omega_f$ on $E$ introduced in  \S\ref{sec: filtrations}.
Finally, \eqref{prop: H1f Kp, E} and the isomorphism
\begin{equation*}
	\Dderham(V_{K_p})/\fil^0\Dderham(V_{K_p})\overset{\cong}{\longrightarrow}K_p(\chi_E), \qquad e_2\mapsto 1,
\end{equation*} allow us to identify $\log_{E}$ with the composition of the Bloch--Kato logarithm
\begin{equation*}\label{eq: log bk Vf}
	\log:\coh^1_f(K_p,V)\longrightarrow\Dderham(V_{K_p})/\fil^0\Dderham(V_{K_p})
\end{equation*}  with the pairing with $\omega_f$.

For a $p$-adic de Rham representation $W$ of $G_{K_p}$, let
\begin{equation*}
\gamma: \fil^0\Dderham(W)=(W\otimes_{\Q_p}\BB_\derham^+)^{G_{K_p}}\longrightarrow\coh^1(K_p,W\otimes_{\Q_p}\BB_\derham^+)
\end{equation*}
be the isomorphism defined by $x\mapsto\gamma(x)$, where $\gamma(x)$ is the cohomology class of the map
\begin{equation*}\label{eq: formula for gamma}
\sigma\mapsto x\cdot\log_p(\chi_{\cycl}(\sigma)).
\end{equation*}
%One can check that $\gamma$ is an isomorphism.
\begin{definition}\label{BKexp}
	The \textit{Bloch--Kato dual exponential} of $W$ is the map
	\begin{equation*}
	\exp^*:\coh^1(K_p,W)\overset{\alpha_*}{\longrightarrow}\coh^1(K_p,W\otimes_{\Q_p}\BB_\derham^+)\overset{\gamma^{-1}}{\longrightarrow}\fil^0\Dderham(W).
	\end{equation*}
\end{definition}

The map
$\coh^1(K_p,W\otimes \BB_\derham^+)\longrightarrow\coh^1(K_p,W\otimes \BB_\derham)$ is injective and thus $\ker(\exp^*)=\coh^1_g(K_p,W)$. We shall regard the dual exponential associated to $W$ as the map
\begin{equation*}\label{eq: dual exponential in general}
\exp^*:\coh^1_s(K_p,W) = \coh^1(K_p,W)/\coh^1_g(K_p,W) \longrightarrow\fil^0\Dderham(W).
\end{equation*}

Similarly as in the case of the Block--Kato logarithm, the dual exponential
 for $W=K_p$ together with the identification $\coh^1_s(K_p,\Q_p)(\chi_E)=\coh^1_s(K_p,\Q_p(\chi_E))$ gives rise to an isomorphism of $K_p[G_{\Q_p}]$-modules
\begin{equation*}\label{eq: exp per Qp(chiE)}
	\exp^*:\coh^1_s(K_p,\Q_p(\chi_E))\longrightarrow K_p(\chi_E).
\end{equation*} 
It coincides with the dual exponential 
\begin{equation}\label{eq: exp per Vf}
	\exp^*:\coh^1_s(K_p,V)\longrightarrow	K_p(\chi_E)
\end{equation} via  the identifications
\begin{equation*}
	\coh^1_s(K_p,\Q_p(\chi_E))\cong\coh^1_s(K_p,V^-)\cong\coh^1_s(K_p,V)
\end{equation*}
explained in the previous section, where 
\begin{equation*}
	\fil^0\Dderham(V_{{K_p}})\overset{\cong}{\longrightarrow} K_p(\chi_E), \qquad e_1\mapsto 1.
\end{equation*} 

%As $\log_E$, also \eqref{eq: exp per Vf} decomposes in its $\pm$-part
%\begin{equation}\label{eq: exp per Vf}
%	\exp^*:\coh^1_s(K_p,V)^{\pm}\longrightarrow	K_p(\chi_E)^{\pm}.
%\end{equation}

%The above logarithm map decomposes as
%\begin{equation}\label{eq: log pm}
%\log^\pm_{E}:E(K_p)^{\pm}\longrightarrow K_p(\chi_E)^{\pm}.
%\end{equation}

It follows directly from the very definitions and  the $G_{\Q_p}$-equivariance of the dual exponential map that
\begin{equation*}\label{defz}
\exp^*(\xi_{\cycl}) = 1 \,\, \mbox{ and } \,\, z:= \exp^*(\xi_{\anti}) \in K_p^{-}.
\end{equation*}
The map \eqref{eq: exp per Vf} and its $\pm$-components can be explicitly described as follows.
%\footnote{V a F: Yo quitaria todo esto que tenemos en azul ahora. Ya se que fui yo quien lo pregunt\'o, pero por lo que dices no es estrictamente necesario para el articulo (corrigeme si me equivoco). Ademas creo que debe haber algun error porque $-\dfrac{u-1}{u_\star-1}$ no pertenece a $K_p^-$ ya que cuando le aplicas Frobenius da $-\dfrac{u-1}{-u_\star-1}$, que no es igual a $-(-\dfrac{u-1}{u_\star-1})$}

\begin{proposition}\label{prop: exp of Vf}
%	\color{blue}
%\begin{equation*}
%	z:=\exp^*(\xi_{\anti})=-\dfrac{u-1}{u_\star-1}\in K_p^-.
%\end{equation*}
%\color{black}
The dual exponential $\exp^*:\coh^1_s(K_p,V)\longrightarrow K_p(\chi_E)$ is characterized by
	\begin{equation*}
\exp^*(\bar{x}_\cycl)=n, \ \  \exp^*(\bar{x}_\anti)=z \in K_p^-.
	\end{equation*}

\end{proposition}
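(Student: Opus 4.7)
The plan is to deduce the proposition from Proposition \ref{prop:1} via the functoriality of the Bloch--Kato dual exponential, reducing a computation on $V$ to the already-known computation on the quotient $V^-$.

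First, I would set up the commutative square of Galois modules. The projection $\pi: V \to V^-$ induces a commutative diagram
\begin{equation*}
\begin{array}{ccc}
\coh^1_s(K_p, V) & \xrightarrow{\exp^*} & \fil^0\Dderham(V_{K_p}) \\
\bar{\pi}_* \downarrow & & \downarrow \pi_* \\
\coh^1_s(K_p, V^-) & \xrightarrow{\exp^*} & \fil^0\Dderham(V_{K_p}^-)
\end{array}
\end{equation*}
by the naturality of $\alpha_*$ and $\gamma$ in Definition \ref{BKexp}. The left vertical map is the isomorphism of Proposition \ref{prop:1}, and the right vertical map is the isomorphism $e_1 \mapsto \bar{q}$ of Lemma \ref{lem: n-1}. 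Under the identifications $\fil^0\Dderham(V_{K_p}) \cong K_p(\chi_E)$ via $e_1 \mapsto 1$ and $\fil^0\Dderham(V_{K_p}^-) \cong K_p(\chi_E)$ via $\bar{q} \mapsto 1$, the right vertical map becomes the identity on $K_p(\chi_E)$.

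Second, I would compute using the values of $\exp^*$ on $V^-$ (equivalently on $\Q_p(\chi_E)$) that are recorded right before the statement, namely $\exp^*(\bar{\xi}_\cycl) = 1$ and $\exp^*(\bar{\xi}_\anti) = z$. Combining Proposition \ref{prop:1} ($\bar{\pi}_*(\bar{x}_\cycl) = n\bar{\xi}_\cycl$, $\bar{\pi}_*(\bar{x}_\anti) = \bar{\xi}_\anti$) with the commutative square gives
\begin{equation*}
\exp^*(\bar{x}_\cycl) = \exp^*(n\bar{\xi}_\cycl) = n \quad \text{and} \quad \exp^*(\bar{x}_\anti) = \exp^*(\bar{\xi}_\anti) = z
\end{equation*}
inside $K_p(\chi_E)$. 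Since $z \in K_p^-$ by $G_{\Q_p}$-equivariance of $\exp^*$ and the fact that $\xi_\anti$ lies in the $-a$ Frobenius eigenspace (as shown in Proposition \ref{prop:1}), the conclusion follows.

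The only potentially delicate point is the identification of the right vertical map with the identity on $K_p(\chi_E)$. Here one must verify that the chosen trivializations $e_1 \mapsto 1$ and $\bar{q} \mapsto 1$ are compatible with $\pi_*(e_1) = \bar{q}$ from Lemma \ref{lem: n-1}; this is essentially the consistency of the bases introduced in Section \ref{sec: the GKp cohomology of E} with Tate's uniformization, and once that is unpacked the computation is immediate. Everything else is formal from naturality of $\exp^*$ and from already-established facts.
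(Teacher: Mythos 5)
Your proposal is correct and takes essentially the same approach as the paper: both reduce the computation on $V$ to the known one on $V^-$ by means of the commuting square arising from naturality of the dual exponential with respect to $\pi: V\to V^-$, together with $\bar\pi_*(\bar x_\cycl)=n\bar\xi_\cycl$, $\bar\pi_*(\bar x_\anti)=\bar\xi_\anti$ from Proposition \ref{prop:1} and the compatibility $\pi_*(e_1)=\bar q$ from Lemma \ref{lem: n-1}. The paper's own proof is just a terser citation of the same three ingredients; your elaboration, including the check that $z\in K_p^-$ because $K_p(\chi_E)^{-a}=K_p^-$, is the intended unwinding.
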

\begin{proof}

	By (the proof of) Proposition \ref{prop:1},
	\begin{equation*}
		\coh^1_s(K_p,\Q_p(\chi_E))^{a}\oplus\coh^1_s(K_p,\Q_p(\chi_E))^{-a}\cong \hom(\Gamma_{\cycl},\Q_p(\chi_E))\oplus \hom(\Gamma_{\anti},\Q_p(\chi_E)),
	\end{equation*} 
whose basis $\{\xi_{\cycl},\xi_{\anti}\}$ is compatible with the above decomposition. 

	The claim follows by using Lemma \ref{lem: n-1}, Proposition \ref{prop:1} and the description of \eqref{eq: exp per Vf} discussed before this proposition.
\end{proof}

%\subsection{The dual exponential of $E$}

\begin{corollary}\label{cor: main appendix}
	There is a commutative diagram whose arrows are isomorphisms of $G_{\Q_p}$-modules: 
	\begin{equation*}
		\xymatrix{\hom_{\cont}(\Gamma_{\cycl},\Phi_{\infty,\fp})\oplus\hom_{\cont}(\Gamma_{\anti}, \Phi_{\infty,\fp})\ar[r]^{\qquad\qquad\qquad\varphi}\ar[d]_{\frac{1}{p}\mathrm{ev}_\cycl}^{\oplus\frac{1}{p}\mathrm{ev}_\anti}& \coh^1_s(K_p,V)\ar[d]^{\exp^*}\\
		\left(	\Phi_{\infty,\fp}\otimes\Q_p\right)\oplus	\left(	\Phi_{\infty,\fp}\otimes\Q_p\right)\ar[r]^{\qquad\qquad\varrho} & K_p(\chi_E)			
		}
	\end{equation*}
	where
\begin{enumerate}
	\item $\varphi:=\left(\bar{\varphi}_\Tate\circ\bar{\pi}_*\right)^{-1}$;
	\item $\varrho(\bar{q},0)=1, \ \varrho(0,\bar{q})=z$, where $z= \exp^*(\xi_{\anti}) \in K_p^{-}$ is as in Proposition \ref{prop: exp of Vf};
	\item $\mathrm{ev}_{\cycl}$ denotes the valuation at $\sigma_{\cycl}\in\Gamma_{\cycl}$, and analogously for $\mathrm{ev}_{\anti}$.
\end{enumerate}
Moreover, $\varphi$ decomposes with respect to the action of $\Frob_p$ as
\begin{equation}\label{eq: varphi pm}
	 \hom_{\cont}(\Gamma_{\cycl},\Phi_{\infty,\fp})\longrightarrow\coh^1_s(K_p,V)^{a}, \qquad  \hom_{\cont}(\Gamma_{\anti},\Phi_{\infty,\fp})\longrightarrow\coh^1_s(K_p,V)^{-a}.
\end{equation}
	
\end{corollary}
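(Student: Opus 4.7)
The plan is to read the corollary as a synthesis of Lemma \ref{lem: Phi infty and limit of bar Phi1 + Phi infty and limit of bar Phi2 + rem: 2}, Lemma \ref{lemma: 2}, and Propositions \ref{prop:1} and \ref{prop: exp of Vf}, so there is essentially no new content beyond bookkeeping. First I would decompose $\varphi$ as $\bar\pi_*^{-1}\circ\bar\varphi_\Tate$. The identification $\Phi_{\infty,\fp}\otimes\Q_p\cong V^-$ from Lemma \ref{lem: Phi infty and limit of bar Phi1 + Phi infty and limit of bar Phi2 + rem: 2} turns the left-hand direct sum into $\hom_\cont(\Gamma_\cycl,V^-)\oplus\hom_\cont(\Gamma_\anti,V^-)$, which by Lemma \ref{lemma: 2} is precisely $\coh^1_s(K_p,V^-)$ with basis $\{\bar\xi_\cycl,\bar\xi_\anti\}$. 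Proposition \ref{prop:1} then supplies the remaining isomorphism $\bar\pi_*^{-1}\colon\coh^1_s(K_p,V^-)\overset{\cong}{\to}\coh^1_s(K_p,V)$.

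Next I would check that $\varrho$ is an isomorphism by showing that $\{1,z\}$ is a $\Q_p$-basis of $K_p(\chi_E)$. The $G_{\Q_p}$-equivariance of $\exp^*$, combined with the Frobenius action on $\xi_\cycl$ and $\xi_\anti$ recorded in the proof of Proposition \ref{prop:1}, places $1=\exp^*(\bar\xi_\cycl)$ in $K_p(\chi_E)^{a}$ and $z=\exp^*(\bar\xi_\anti)$ in $K_p(\chi_E)^{-a}$; as each eigenspace is one-dimensional over $\Q_p$, these two vectors span.

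To verify commutativity, I would trace one representative in each summand. For $\alpha\in\hom_\cont(\Gamma_\cycl,\Phi_{\infty,\fp})$ with $\alpha(\sigma_\cycl)=c\cdot\bar q$, the equality $\xi_\cycl(\sigma_\cycl)=\log_p(u)=p$ rewrites $\alpha$ as $\tfrac{c}{p}\,\xi_\cycl\otimes\bar q$ in $\hom_\cont(\Gamma_\cycl,V^-)$. Applying $\bar\pi_*^{-1}$ and invoking $\bar\pi_*(\bar x_\cycl)=n\bar\xi_\cycl$ from Proposition \ref{prop:1} yields $\varphi(\alpha,0)=\tfrac{c}{np}\,\bar x_\cycl$, whence $\exp^*\circ\varphi(\alpha,0)=\tfrac{c}{p}$ by Proposition \ref{prop: exp of Vf}. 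On the parallel route, $\tfrac{1}{p}\mathrm{ev}_\cycl(\alpha)=\tfrac{c}{p}\bar q$ and $\varrho(\tfrac{c}{p}\bar q,0)=\tfrac{c}{p}$. The anticyclotomic case is strictly analogous, using $\bar\pi_*(\bar x_\anti)=\bar\xi_\anti$ and $\exp^*(\bar x_\anti)=z$, producing $\tfrac{d}{p}z$ on both paths. Finally, the Frobenius decomposition \eqref{eq: varphi pm} is automatic from Proposition \ref{prop:1}: $\varphi$ maps $\hom_\cont(\Gamma_\cycl,\Phi_{\infty,\fp})$ onto $\Q_p\cdot\bar x_\cycl=\coh^1_s(K_p,V)^{a}$, and similarly $\hom_\cont(\Gamma_\anti,\Phi_{\infty,\fp})$ onto $\coh^1_s(K_p,V)^{-a}$.

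The only real obstacle is the numerical bookkeeping: three different normalization factors must cancel simultaneously -- the $\tfrac{1}{p}$ coming from $\log_p(u)=p$, the $n=\ord_p(q_E)$ arising in $\bar\pi_*(\bar x_\cycl)=n\bar\xi_\cycl$, and the matching $n$ in $\exp^*(\bar x_\cycl)=n$. Individually none of the intermediate arrows is normalized to send a preferred generator to a preferred generator, but the three factors line up to give the clean commutative diagram asserted in the statement.
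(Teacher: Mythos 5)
Your proof is correct and takes essentially the same approach as the paper's: the paper's own proof simply records the normalization $\xi_{\cycl}(\sigma_{\cycl})=\overline{\varphi}_\Tate(\log_p u)=p\cdot\bar q$ (and its anticyclotomic analogue) and then cites Propositions \ref{prop:1}, \ref{prop: exp of Vf} and Lemma \ref{lem: Phi infty and limit of bar Phi1 + Phi infty and limit of bar Phi2 + rem: 2}, which is exactly the bookkeeping you carry out explicitly by tracing generators through the two paths of the diagram. The only cosmetic point is that you write $\varphi=\bar\pi_*^{-1}\circ\bar\varphi_\Tate$ while the corollary has $\varphi=(\bar\varphi_\Tate\circ\bar\pi_*)^{-1}$; this is just using $\bar\varphi_\Tate$ in the opposite orientation (as the map $\Phi_{\infty,\fp}\otimes\Q_p\to V^-$ rather than its inverse) and does not affect the argument.
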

\begin{proof}
	By definition we have
	\begin{align*}
	\xi_{\cycl}(\sigma_{\cycl})  =  \overline{\varphi}_\Tate(\log_p(\chi_{\cycl}(\sigma_{\cycl})))
	 = \overline{\varphi}_\Tate(\log_p(u))
	 = \overline{\varphi}_\Tate(p)
	 = p\cdot \bar{q}
	\end{align*}
	and similarly, $\xi_{\anti}(\sigma_{\anti}) = p\cdot\bar{q}$.
The statement follows by combining (the proof of) Proposition \ref{prop: exp of Vf}, Proposition \ref{prop:1} and Lemma \ref{lem: Phi infty and limit of bar Phi1 + Phi infty and limit of bar Phi2 + rem: 2}.
\end{proof}

\color{black}

\section{Hida families and Selmer groups associated to $(f, g, h)$}\label{sec: the selmer group of fxgxh}

\subsection{Filtrations and differentials}\label{sec: filtrations}
%In this section we recall the main properties of the representations attached to modular forms and Hida families. In particular, we describe the filtration of both $p$-adic and $\Lambda$-adic representations, which permits some differentials that we will use in the following sections.

Let \begin{equation*}
	\varphi\in S_w(M,\eps)
\end{equation*} be an ordinary newform of weight $w$, level $M$ and character $\eps$ with coefficients in a finite extension $L/\Q$. Fix an algebraic closure $\bar\Q$ of $\Q$ and an embedding $L \subset \bar\Q$. Fix also a prime number $p$, an algebraic closure $\bar\Q_p$ of $\Q_p$ and an embedding $\bar\Q \hookrightarrow \bar\Q_p$, with respect to which we assume  $\varphi$ is ordinary. Let $L_p \subset \bar\Q_p$ denote the $p$-adic closure of $L$.

Let $V_\varphi$ denote the $p$-adic Galois representation attached to $\varphi$ with coefficients in $L_p$ and let $\alpha_\varphi,\beta_\varphi$ be the roots of the Hecke polynomial $x^2+a_p(\varphi)x+\varepsilon(p)$ at $p$ of $V_\varphi$. Since $\varphi$ is ordinary, one of these roots, say $\alpha_{{\varphi}}$, is a $p$-adic unit.  As a $G_{\Q_p}$-representation, $V_\varphi$ is equipped with a decreasing filtration of $L_p[G_{\Q_p}]$-modules
\begin{equation}\label{eq: filtration Vphi}
\fil^2(V_\varphi)=0\subseteq\fil^1(V_\varphi)\subseteq V_\varphi=\fil^0(V_\varphi).
\end{equation}
Let \begin{equation*}
\psi_\varphi: G_{\Q_p}\longrightarrow L_p^\times
\end{equation*} be the unramified character such that $\psi_\varphi(\Frob_p)=\alpha_\varphi$.
The filtration \eqref{eq: filtration Vphi} satisfies:
\begin{enumerate}
	\item[(i)] $V_\varphi^+:=\fil^1(V_\varphi)$ and $V_\varphi^-:=V_\varphi/\fil^1(V_\varphi)$ have dimension one over $L_p$;
	\item[(ii)]  $G_{\Q_p}$ acts on $V_\varphi^+$ via $\chi_{\cycl}^{w-1}\eps\psi_\varphi^{-1}$;
	\item[(iii)] $G_{\Q_p}$ acts on $V_\varphi^-$ via $\psi_\varphi$. In particular, it is an unramified $G_{\Q_p}$-module.
\end{enumerate}

%\footnote{V a F: Esto creo que lo podemos borrar, porque ya lo hemos explicado en el capitulo $2$. No lo borres aun, porque hay cosas como $\Dderham(V)\cong(V\otimes\hat{\Q}_p^{\mathrm{ur}})^{G_{\Q_p}},$ que en el cap. 2 no esta y yo lo pondria justo en el primer momento que lo necesitemos.For any de Rham representation $V$ of $G_{\Q_p}$, define as usual the Dieudonn\'e module $$\Dderham(V):=(V\otimes\BB_\derham)^{G_{\Q_p}}$$ where $\BB_\derham$ is Fontaine's $\Q_p$-algebra of de Rham periods. Recall that, if $V$ is unramified, then 
%\begin{equation*}
%\Dderham(V)\cong(V\otimes\hat{\Q}_p^{\mathrm{ur}})^{G_{\Q_p}},
%\end{equation*}
%where $\hat{\Q}_p^{\mathrm{ur}}$ is the $p$-adic completion of the maximal unramified extension of $\Q_p$. 
%Our $V_\varphi$ is de Rham, and as such $\Dderham(V_\varphi)$ is equipped with a filtration
%\begin{equation*}
%\fil^2\Dderham(V_\varphi)=0\subseteq \fil^1\Dderham(V_\varphi)\subseteq \Dderham(V_\varphi)=\fil^0\Dderham(V_\varphi).
%\end{equation*}\\
%F a V: la primera vez que lo necesitamos (o mejor, que lo utilizamos) es en \eqref{rem: biglog}.}

When $w\geq2$ Faltings' comparison theorem yields an isomorphism
 \begin{equation}\label{eq: iso Sw fil1derham} 
 S_w(M)_{L_p}[\varphi]\overset{\cong}{\longrightarrow} \fil^1\Dderham(V_\varphi)\cong \Dderham(V_\varphi^-),
 \end{equation} 
 where  $S_w(M)_{L_p}[\varphi]$ denotes the $\varphi$-isotipical component of $S_w(M)\otimes L_p$.
We define
\begin{equation*} 
	\omega_\varphi\in \Dderham(V_\varphi^-)
\end{equation*}  
to be the image of $\varphi$ via \eqref{eq: iso Sw fil1derham}.

Denote $\varphi^*:=\varphi\otimes\eps^{-1}\in S_w(M,\bar{\eps})$.  Consider the pairing
\begin{equation*}\label{eq: pairing Vphi, Vphi*}
\langle \ , \ \rangle: V_{\varphi}\times V_{\varphi^*}(1)\longrightarrow L_p
\end{equation*}
 given by Poincar\'e's duality. This in turn  induces perfect pairings
\begin{equation*}\label{eq: pairing DV*, DV} 
	\langle \ , \ \rangle: \Dderham(V_{\varphi})\times \Dderham(V_\varphi^*(1))\longrightarrow \Dderham(L_p)=L_p
\end{equation*}
%Similarly to \eqref{eq: iso Sw fil1derham}, there is an isomorphism
%\begin{equation*}
%	S_w(M)_{L_p}[\varphi^*]\overset{\cong}{\longrightarrow}\fil^1\Dderham(V_{\varphi^*}(-1))=\fil^0\Dderham(V_\varphi^*),
%\end{equation*}
% so \eqref{eq: pairing DV*, DV} induces the pairing
and
\begin{equation*}
\langle \ , \ \rangle: \Dderham(V_\varphi^+) \times S_w(M)_{L_p}[\varphi^*] \longrightarrow L_p.
\end{equation*}
Define
\begin{equation*}
	\eta_\varphi\in \Dderham(V_\varphi^+)\subseteq\Dderham(V_\varphi)
\end{equation*}
to be the differential characterized by 
\begin{equation*}
\langle \omega_{\varphi^*}, \eta_\varphi\rangle=1.
\end{equation*}

%\footnote{V a F: Esto lo dejo aparte por ahora, no esta del todo bien escrito, pero alomejor lo podremos borrar directamente, ya veremos. More generally, if $\varphi$ is not new of level $M$, there exists a newform $\tilde{\varphi}$ of level a divisor of $M$ associated to it. Then the representation $V_\varphi$ attached to $\varphi$ is isomorphic to the direct sum of a finite number of copies of $V_{\tilde{\varphi}}$, and one can still define the differentials $\omega_\varphi, \eta_\varphi\in\Dderham(V_\varphi)$ in a similar way as above. }

If $\varphi$ has weight $w=1$, then we will always assume that $p\nmid M$. In this case, both $\alpha_\varphi$ and $\beta_\varphi$ are $p$-adic units, and  throughout this paper we will also assume that 
\begin{equation*}
	\alpha_{{\varphi}}\neq\beta_\varphi.
\end{equation*} 
In this setting, the filtration \eqref{eq: filtration Vphi} admits a splitting. More precisely, if we 
denote $V_\varphi^\alpha, V_\varphi^\beta$ the eigenspaces for the action of $\Frob_p$ on $V_\varphi$, with eigenvalues $\alpha_\varphi, \beta_\varphi$ respectively, then 
\begin{equation*}
V_\varphi=V_\varphi^\alpha\oplus V_\varphi^\beta, \quad V_\varphi^+=V_\varphi^\beta, \qquad V_\varphi^-=V_\varphi^\alpha
\end{equation*}
as $G_{\Q_p}$-modules.
In this case, there is a pairing 
\begin{equation}\label{pairing weight one} 
	\Dderham(V_\varphi)\times\Dderham(V_\varphi)\longrightarrow \Dderham(L_p(\chi_\varphi)),
\end{equation} and we let
\begin{equation}\label{differentials}
	\omega_{\varphi_\alpha}\in\Dderham(V_\varphi^\alpha), \  \ \eta_{\varphi_\alpha}\in\Dderham(V_\varphi^\beta)
\end{equation}
be the differentials attached by Ohta to the $p$-stabilized eigenform $\varphi_\alpha$ as described in \cite[\S2]{DR2b}. 
They satisfy
\begin{equation}\label{eq: pairing between omega and eta}
	\langle \omega_{\varphi_\alpha},\eta_{\varphi_\alpha}\rangle =  \mathfrak{g}(\varepsilon) \in L_p,
\end{equation}
where $\mathfrak{g}(\varepsilon)$ denotes the Gauss sum of the nebentype character $\varepsilon$ of $\varphi$.

Attached to a weight $1$ modular form $\varphi$ there is an Artin representation
\begin{equation*}
\rho_\varphi:G_\Q\rightarrow\GL(V^{\circ}_\varphi)
\end{equation*} 
where $V^{\circ}_\varphi$ is an $L$-vector space equipped with a (non-canonical) isomorphism $j_\varphi:V^{\circ}_\varphi\otimes_L  L_p\overset{\cong}{\rightarrow} V_\varphi$. The choice of $j_\varphi$ thus induces an $L$-structure on $V_\varphi$ by $V_\varphi^L:=j_\varphi(V^{\circ}_\varphi)$. 

Let  $v_\varphi^{\alpha}$ be an $L$-basis of $V_\varphi^L\cap V_\varphi^\alpha$ and $v_\varphi^\beta$ an $L$-basis of $V_\varphi^L\cap V_\varphi^\beta$. Let $H$ be the number field cut out by $\rho_\varphi$, fix once and for all an embedding $H\subseteq\bar{\Q}_p$ and denote $H_p$ the $p$-adic completion of $H$ inside $\bar{\Q}_p$.  Finally, define periods
$$\Omega_{\varphi_\alpha}\in H_p^{\Frob_p=1/\alpha_\varphi}, \ \Theta_{\varphi_\alpha}\in H_p^{\Frob_p=1/\beta_\varphi}$$ characterized as 
\begin{equation}\label{eq: definition of Omega and Theta}
\Omega_{\varphi_\alpha}\otimes v_\varphi^\alpha=\omega_{\varphi_\alpha}\in \Dderham(V_g^\alpha), \ \ \Theta_{\varphi_\alpha}\otimes v_\varphi^\beta=\eta_{\varphi_\alpha}\in \Dderham(V_\varphi^\beta).  
\end{equation}
Although the above periods depend on the choice of $j_\varphi$, the ratio
\begin{equation}\label{Lga}
\L_{g_\alpha}:=\dfrac{\Omega_{g_\alpha}}{\Theta_{g_\alpha}}
\end{equation}
is readily seen to be independent of this choice; it only depends on the choice of basis $\{ v_\varphi^{\alpha}, v_\varphi^{\beta} \}$ and is thus well defined up to scalars in $L^\times$.

\vspace{0.3cm}

Let $\Lambda:=\Z_p[[1+p\Z_p]]$ be the Iwasawa algebra and 
$\cW:=\mathrm{Spf}(\Lambda)$ be the weight space. It is the rigid analytic space whose $A$-points for any $\Z_p$-algebra $A$ are 
$$\cW(A)=\hom_{\Z_p-\mathrm{alg}}(\Lambda,A)=\hom_\mathrm{cts}(1+p\Z_p,A^\times).$$ Recall that $\Z$ can be seen as a dense subset of $\cW(\C_p)$ identifying an integer $k\in\Z$ with the character $\nu_k:(x\mapsto x^k)\in\hom_\mathrm{cts}(1+p\Z_p,\C_p^\times)$. More generally, given a Dirichlet character $\epsilon$ of $p$-power conductor, we denote $\nu_{k,\epsilon}$ the point $(x\mapsto\epsilon(x)x^k)\in\cW(\C_p)$. A character $\nu \in \cW$ is called  \textit{classical} if it is of the form $\nu=\nu_{k,\epsilon}$ with $k\geq 2$ and $k$ is called its \textit{weight}. A classical point  is \textit{cristalline} if it is of the form $\nu_{k,\omega^k}$, where $\omega$ is the Teichm\"uller character.

Let  $$\boldsymbol{\varphi}=\sum a_n({\boldsymbol{\varphi}})q^n\in\Lambda_{\boldsymbol{\varphi}}[[q]]$$ be a Hida family of tame level $M$ and tame character $\chi$, where $\Lambda_{\boldsymbol{\varphi}}$ is a finite flat extension of $\Lambda$, and let $\kappa:\cW_{\boldsymbol{\varphi}}\to\cW$ be the \textit{weight map} induced by the inclusion $\Lambda\subseteq\Lambda_{\boldsymbol{\varphi}}$. A point $x\in\cW_{\boldsymbol{\varphi}}$ is called \textit{classical} (resp.\,\textit{cristalline}) if $\kappa(x)$ is, and we denote $\cW_{\boldsymbol{\varphi}}^{\mathrm{cl}}$ (resp. $\cW_{\boldsymbol{\varphi}}^{\circ}$) the set of classical (resp.\,cristalline) points of $\cW_{\boldsymbol{\varphi}}$. For each $x\in\cW_{\boldsymbol{\varphi}}^{\mathrm{cl}}$ with $\kappa(x)=\nu_{k,\epsilon}$, the specialization
\begin{equation*}
	\boldsymbol{\varphi}_x:=\sum_{n\geq1}x(a_n(\boldsymbol{\varphi}))q^n
\end{equation*}
of $\boldsymbol{\varphi}$  at $x$
is the $q$-expansion of a classical $p$-ordinary eigenform of weight $k$ and character $\chi\epsilon\omega^{-k}$. If $x\in \cW$ is a point of weight $1$, meaning that $\kappa(x)=\nu_{1,\epsilon}$ for some Dirichlet character $\epsilon$ of $p$-power conductor, the specialization $\boldsymbol{\varphi}_x$ may be a classical modular form or not. (Later we shall regard the $p$-stabilizations of $g$ and $h$ as weight $1$ specializations of a Hida family, and these are classical by construction.)

Notice that the specializations of $\boldsymbol{\varphi}$ at cristalline points have all nebentype $\chi$. If $x\in\cW_{\boldsymbol{\varphi}}^{\circ}$ is a cristalline point of weight $k>2$, then $\boldsymbol{\varphi}_x$ is the ordinary $p$-stabilization of a newform $\varphi_x\in S_k(N,\chi)$. If $k=2$, then $\boldsymbol{\varphi}_x$ may be either old or new at $p$. We denote $\varphi_x:=\boldsymbol{\varphi}_x$ if it is new, while if $\boldsymbol{\varphi}_x$ is old at $p$, we denote $\varphi_x$ the newform whose $p$-stabilization is $\boldsymbol{\varphi}_x$.
 If no confusion is likely to arise, we may often denote these forms  by an abuse of notation as $\boldsymbol{\varphi}_k$ and $\varphi_k$.
%If no confusion is likely to arise, we may simply write $\bff_k$ for $\bff_x$, and $f_k$ for $f_x$.  

Let $V_{\boldsymbol{\varphi}}$ be the Galois representation of $G_{\Q}$  attached to ${\boldsymbol{\varphi}}$, a $\Lambda_{\boldsymbol{\varphi}}$-module of rank $2$.
As a $G_{\Q_p}$-representation there is again a filtration
\begin{equation*}\label{eq: filtration VPhi}
\fil^2(V_{\boldsymbol{\varphi}})=0\subseteq \fil^1(V_{\boldsymbol{\varphi}})\subseteq V_{\boldsymbol{\varphi}}=\fil^0(V_{\boldsymbol{\varphi}})
\end{equation*}
where $V_{\boldsymbol{\varphi}}^+:=\fil^1(V_{\boldsymbol{\varphi}})$ has dimension one, and the quotient  $V_{\boldsymbol{\varphi}}^-:=V_{\boldsymbol{\varphi}}/\fil^1(V_{\boldsymbol{\varphi}})$ is unramified. More precisely, $G_{\Q_p}$ acts on $V_{\boldsymbol{\varphi}}^-$ via the unramified character
\begin{equation*}\label{eq: psiPhi}
\psi_{\boldsymbol{\varphi}}:G_{\Q_p}\longrightarrow\Lambda_{\boldsymbol{\varphi}}^\times
\end{equation*}
such that $\psi_{\boldsymbol{\varphi}}(\Frob_p)=a_p({\boldsymbol{\varphi}})$. 

%{\color{red}Let $\cW_{\boldsymbol{\varphi}}:=\mathrm{Spf}(\Lambda_{\boldsymbol{\varphi}})$, it is endowed with a map $\kappa:\cW_{\boldsymbol{\varphi}}\longrightarrow\cW$ induced by the inclusion $\Lambda\subseteq\Lambda_{\boldsymbol{\varphi}}$. A point $x\in\cW_{\boldsymbol{\varphi}}$ is \textit{arithmetic} (resp. cristalline) if $\kappa(x)$ is so, and the \textit{weight} of $x$ is defined as the weight of $\kappa(x)$. We denote  }
%
%

If $x$ is cristalline and $L_p/\Q_p$ is a finite extension containing the values of $x$, then  the Galois representation $V_{\boldsymbol{\varphi}}\otimes_{\Lambda_{\boldsymbol{\varphi}},x}L_p\cong V_{\varphi_x}$ is cristalline at $p$.

Fix now three cuspidal Hida families
\begin{equation*}
	\bff\in\Lambda_{\bf f}[[q]], \qquad \bfg\in\Lambda_{\bfg}[[q]], \qquad \bfh\in\Lambda_{\bfh}[[q]].
\end{equation*}

Write $\Lambda_{\mathbf{fgh}}  = \Lambda_{\bff} \hat\otimes \Lambda_{\bfg} \hat\otimes \Lambda_{\bfh}$ and $\cO_{\mathbf{fgh}}:=\Lambda_{\bff}[1/p]\hat{\otimes}\Lambda_{\bfg}[1/p]\hat{\otimes}\Lambda_{\bfh}[1/p]$. Let $\cW_{\mathbf{fgh}} = \mathrm{Spf}(\Lambda_{\mathbf{fgh}}) = \cW_\bff\times\cW_\bfg\times\cW_\bfh$ denote the associated three-dimensional product of weight spaces. Denote $\cW_{\mathbf{fgh}}^{\circ}$ the set of cristalline  points $(x,y,z)\in\cW_\bff^{\circ}\times\cW_\bfg^{\circ}\times\cW_\bfh^{\circ}$ of weights $(k,\ell,m)\in\Z_{\geq2}\times\Z_{\geq1} \times \Z_{\geq1}$  such that $k\equiv 2\mod(p-1), \ell\equiv m\equiv1\mod2(p-1)$. As above, by abuse of notation we 
may identify points $(x,y,z)$ of $\cW_{\mathbf{fgh}}^{\circ}$ with their weights $(k,\ell,m)$.
  
Set $$\V:= V_\bff\otimes V_\bfg\otimes V_\bfh\otimes_{\cO_{\mathbf{fgh}}}\Xi,$$ 
where $\Xi:G_{\Q_p}\longrightarrow\cO_{\mathbf{fgh}}^\times$ is the character defined in \cite[\S4.6.2]{BSV19reciprocity}
 % \footnote{No encuentro 4.6.2 en el articulo de BSV, puedes ocrregir la referencia? Si es correcta, puedes enviarme la version que esas usando?\\ F a V: estoy utilizando la version que esta en tu web: "heegner points, stark-heegner points and and diagonal cycles", la que contiene los 5 articulos de BDRSV, el \S4.6.2 esta a la pagina 134} 
  and is characterized by the property that, if  $\underline{x}=(k,\ell,m)\in\cW_{\mathbf{fgh}}^{\circ}$ is a  triple of cristalline points  and $\rho_{\underline{x}}:=\nu_k\otimes\nu_\ell\otimes\nu_m:\cO_{\mathbf{fgh}}^\times\longrightarrow L_p$ is the corresponding specialisation map, then $\rho_{\underline{x}}\circ\Xi=\chi_{\cycl}^{(4-k-\ell-m)/2}$. We define a filtration on $\V$ by
\begin{equation*}
	\fil^q(\V):=[\oplus_{i+j+t=q}\fil^i(V_\bff)\hat{\otimes} \fil^j(V_\bfg)\hat{\otimes} \fil^t(V_\bfh)]\otimes_{\cO_{\mathbf{fgh}}}\Xi,
\end{equation*}

Since $\V/\fil^2(\V)$ has no $G_{\Q_p}$-invariants, the natural inclusion $$\fil^2(\V)= [V_\bff\otimes V_\bfg^+\otimes V_\bfh^+\otimes_{\cO_{\mathbf{fgh}}}\Xi]\oplus  [V_\bff^+\otimes V_\bfg\otimes V_\bfh^+\otimes_{\cO_{\mathbf{fgh}}}\Xi]\oplus [V_\bff^+\otimes V_\bfg^+\otimes V_\bfh\otimes_{\cO_{\mathbf{fgh}}}\Xi] \hookrightarrow \V$$
 induces an injective morphism in cohomology
\begin{equation*}
	\coh^1(\Q_p,\fil^2(\V))\longrightarrow \coh^1(\Q_p,\V),
\end{equation*} 
and henceforth we shall identify $\coh^1(\Q_p,\fil^2(\V))$ with its image in $\coh^1(\Q_p,\V)$.
Defining
\begin{equation}\label{VfVgVh}
\V^f:=V_\bff^-\otimes V_\bfg^+\otimes V_\bfh^+\otimes_{\cO_{\mathbf{fgh}}}\Xi, \quad \V^g:=V_\bff^+\otimes V_\bfg^-\otimes V_\bfh^+\otimes_{\cO_{\mathbf{fgh}}}\Xi, \quad \V^h:=V_\bff^+\otimes V_\bfg^+\otimes V_\bfh^-\otimes_{\cO_{\mathbf{fgh}}}\Xi,
\end{equation} it follows that $\coh^1(\Q_p,\fil^2(\V)/\fil^3(\V))$ decomposes as
\begin{equation*}
\coh^1(\Q_p,\V^f)\oplus\coh^1(\Q_p,\V^g)\oplus \coh^1(\Q_p,\V^h).
\end{equation*}

For $\varphi\in\{ f,g,h \}$, we denote 
\begin{equation*}
	\pi_\varphi: \coh^1(\Q_p,\fil^2(\V))\longrightarrow\coh^1(\Q_p,\V^\varphi)
\end{equation*}
the natural projection map.

\subsection{Selmer groups}\label{sec: selmer groups}
%We recall the basic definition of the Selmer group and the relaxed Selmer group attached to a $p$-adic representation, in order to study the structure of the relaxed Selmer group of the representation $V:=V_f\otimes V_g\otimes V_h$ attached to the triple of modular forms introduced at the beginning of \S\ref{sec: the selmer group of fxgxh}. 
 Let $W$ be a $L_p[G_\Q]$-module. Given a prime number $\ell$, define as costumary
\begin{equation*}\label{eq: selmer structure}
\coh_f^1(\Q_\ell,W):=\begin{cases}  \coh^1_\mathrm{ur}(\Q_\ell,W):=\coh^1(\Q_\ell^\mathrm{ur}/\Q_\ell,W^{I_\ell}) & \ell\neq p\\ \ker\Big(\coh^1(\Q_p,W)\rightarrow\coh^1(\Q_p,W\otimes_{\Q_p} \BB_\mathrm{cris})   \Big) & \ell=p,	\end{cases}
\end{equation*}
and 
\begin{equation*}
\coh^1_s(\Q_\ell,W):=\coh^1(\Q_\ell,W)/\coh^1_f(\Q_\ell,W).
\end{equation*}
The Bloch--Kato Selmer group of $W$ is $$\Sel_p(W):=\{x\in\coh^1(\Q,W)\mid \res_\ell(x)\in\coh_f^1(\Q_\ell,W) \text{ for all }\ell \},$$
where $\res_\ell\colon \coh^1(\Q,W)\ra \coh^1(\Q_\ell,W)$ denotes the restriction map in Galois cohomology.

For each prime $\ell$, we denote by $\partial_\ell$ the composition 
\begin{equation*}
\partial_\ell: \coh^1(\Q,W)\overset{\res_\ell}{\longrightarrow}\coh^1(\Q_\ell,W)\longrightarrow \coh^1_s(\Q_\ell,W).
\end{equation*}
%where the second map is the natural quotient map.

%=E(H)^{\rho_{gh}}\subseteq E(H)\otimes L.
The relaxed Selmer group is defined as $$\Sel_{(p)}(W):=\{ x\in\coh^1(\Q,W)\mid \res_\ell(x)\in\coh_f^1(\Q_\ell,W) \ \text{ for all }\   \ell\neq p \}\supseteq\Sel_p(\Q,W).$$

\vspace{0.3cm}

Let now $(f,g,h)$ be the triple of eigenforms of weights $(2,1,1)$ introduced at the beginning of the article.
Recall Assumption \ref{ass: rank 0} imposed on $V:=V_f\otimes V_g\otimes V_h$ in the introduction, which implies 
\begin{equation*}
\Sel_p(V)=0.
\end{equation*}
%\footnote{F a V: aqui nos habiamos olvidado de explicar porque Assumption  \ref{ass: rank 0} implica que $\Sel_p(V)=0$. He anadido lo que sigue, puedes modificarlo si no te convence la forma}
Indeed, as discussed in detail in \S\ref{sec: Decomposition of the representation $V$ and consequences}, the representation $V$ decomposes as a direct sum $$V=\left[V_f\otimes V_{\psi_1}\right]\oplus \left[V_f\otimes V_{\psi_2}\right],$$ where, for $i=1,2$, the $G_\Q$-representation $V_{\psi_i}$ is induced by a ring class character $\psi_i$ of the imaginary quadratic field $K$. This decomposition induces factorisations 
\begin{equation*}
	L(E,\rho,s)= L(E/K,\psi_1,s)\cdot L(E/K,\psi_2,s), \qquad \Sel_p(V)=\Sel_p(E\otimes\psi_1)\oplus\Sel_p(E\otimes\psi_2).
\end{equation*}
Combining the factorisation of the complex $L$-function with Assumption \ref{ass: rank 0} we obtain that $L(E/K,\psi_i,1)\neq0$ for $i=1,2$, which, by \cite[Theorem B]{BD97}, implies that $\Sel_p(E\otimes\psi_1)=\Sel_p(E\otimes\psi_2)=0$. 

%In this setting, we are interested in studying the structure of the $p$-relaxed Selmer group, which is defined as
%\begin{equation*}
%	\Sel_{(p)}(V):=\{ x\in\coh^1(\Q,V) \mid \res_\ell(x)\in\coh^1_f(\Q_\ell,V)  \}.
%\end{equation*} A first step in this direction is the following lemma. 

\begin{lemma}\label{lem: relaxed selmer iso with H1s}
There is an isomorphism
	\begin{equation*}
	\partial_p: \Sel_{(p)}(V)\overset{\cong}{\longrightarrow} \coh^1_s(\Q_p,V)
	\end{equation*}
\end{lemma}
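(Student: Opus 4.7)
The plan is to prove injectivity and surjectivity separately. Injectivity is essentially immediate from the vanishing of $\Sel_p(V)$ already established in the preamble to the statement; surjectivity will follow from a standard Poitou--Tate global duality argument once one checks the Kummer self-duality of $V$.

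For injectivity, a class $x \in \Sel_{(p)}(V)$ lies in $\ker(\partial_p)$ if and only if $\res_p(x) \in \coh^1_f(\Q_p, V)$. Combined with the crystalline conditions at $\ell \neq p$ already built into the definition of $\Sel_{(p)}$, this is exactly the statement that $x \in \Sel_p(V)$. Since the preceding discussion shows $\Sel_p(V) = 0$ (via the decomposition $V = (V_f \otimes V_{\psi_1}) \oplus (V_f \otimes V_{\psi_2})$, Assumption~\ref{ass: rank 0}, and \cite[Thm.\ B]{BD97} applied to each summand), we obtain $\ker(\partial_p) = 0$.

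For surjectivity, I would invoke the Greenberg--Wiles / Poitou--Tate exact sequence for the pair of Selmer structures $\mathcal{L}' \subseteq \mathcal{L}$ on $V$, where $\mathcal{L}'$ is the Bloch--Kato structure (crystalline at every prime) and $\mathcal{L}$ relaxes the condition at $p$ only. Since $\mathcal{L}_v = \mathcal{L}'_v$ for $v \neq p$ and $\mathcal{L}_p / \mathcal{L}'_p = \coh^1_s(\Q_p, V)$, the duality sequence reads
\begin{equation*}
0 \to \Sel_p(V) \to \Sel_{(p)}(V) \xrightarrow{\partial_p} \coh^1_s(\Q_p, V) \to \Sel_p(V^*(1))^\vee \to \Sel_p^{\mathrm{str}}(V^*(1))^\vee \to 0,
\end{equation*}
where $\Sel_p^{\mathrm{str}}$ denotes the strict Selmer group (trivial local condition at $p$). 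Thus surjectivity reduces to showing $\Sel_p(V^*(1)) = 0$.

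To close the argument, I would verify that $V$ is Kummer self-dual, i.e.\ $V^*(1) \cong V$ as $G_{\Q}$-modules. Poincar\'e duality on the Tate module gives $V_f^*(1) \cong V_f$; for the weight-one factors, $V_g^* \cong V_g \otimes \chi^{-1}$ and $V_h^* \cong V_h \otimes \chi$, and these twists cancel in the tensor product because the nebentype of $h$ is the inverse of that of $g$. Hence $V^*(1) \cong V$, so $\Sel_p(V^*(1)) = \Sel_p(V) = 0$, which forces the cokernel of $\partial_p$ to vanish. The only real bookkeeping is the identification $(\mathcal{L}'_v)^\perp = \mathcal{L}'_v$ under the local Tate pairings (self-orthogonality of Bloch--Kato local conditions, which holds because the representations are de Rham and the local $H^1_f$ has half the dimension of $H^1$ in the de Rham case), and the identification $\mathcal{L}^\perp_p = 0$; these are the only points requiring attention in the duality step.
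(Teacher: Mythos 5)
Your proposal is correct and takes essentially the same route as the paper: the paper's proof is a one-line citation to the Poitou--Tate exact sequence of \cite[Theorem 2.3.4]{MR04}, and what you have written out (injectivity from $\Sel_p(V)=0$, surjectivity from the five-term duality sequence plus the Kummer self-duality $V^*(1)\cong V$ which reduces the vanishing of $\Sel_p(V^*(1))$ to the already-established vanishing of $\Sel_p(V)$) is exactly the content of applying that theorem here. One small remark: the self-orthogonality of the Bloch--Kato local conditions is better quoted as the annihilator statement $H^1_f(\Q_p,V)^\perp = H^1_f(\Q_p,V^*(1))$ (valid for any de Rham representation, in particular the merely semistable $V_f$), which becomes a genuine self-orthogonality only after invoking $V\cong V^*(1)$; the parenthetical dimension-count heuristic you give is not itself what makes the identification work, though it is a useful sanity check.
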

\begin{proof}
%Since the representation $V$ is\footnote{V a F: cuando dices $V$ is self-dual, creo que quieres decir $V^\vee\simeq V(\pm 1)$ --no recuerdo el signo ahora-- y esto en la literatura se dice "Kummer self-dual"} self-dual, we have an isomorphism%\footnote{No has definido que significa la estrellita en $\Sel_{p,*}$. Alomejor no hace falta definirlo, pero podemos dar una referencia o algo?} $$\Sel_{p,*}(V^*)\cong\Sel_{p}(V),$$ (see, for example, \cite{BK90} and \cite[Theorem 2.1]{Belaichenotes}). 
This follows from Poitou--Tate duality as in 
%	\begin{equation*}\label{eq: exact sequence of Selmer groups}
%	0\rightarrow \Sel_p(\Q,V)\longrightarrow\Sel_{(p)}(\Q,V)\longrightarrow\coh^1(\Q_p,V)/\coh_f^1(\Q_p,V)\longrightarrow\Sel_{p,*}(\Q,V^*)^\lor\longrightarrow\Sel_{[p],*}(\Q,V^*)^\lor
%	\end{equation*}
 \cite[Theorem 2.3.4]{MR04}, where a  similar statement is proved, under the (irrelevant) assumption that $p\nmid N_f$.
\end{proof}

The characteristic polynomial for the action of $\Frob_p$ on $V_g$ is $x^2-a_p(g)x+\chi(p)=x^2-\eps(p)$ and we may thus write the eigenvalues of $\Frob_p$ as  $\alpha_g=\lambda$, $\beta_g=-\lambda$ for some root of unity $\lambda$. The same holds for $h$ and since its nebentype is the inverse of that of $g$ we have
\begin{equation*}
	(	\alpha_h,\beta_h)=(1/\lambda,-1/\lambda) \ \text{ or } \  (	\alpha_h,\beta_h)=(-1/\lambda,1/\lambda).
\end{equation*}\color{black}
Using the notation of \S\ref{sec: filtrations}, denote $\{v_g^\alpha, v_g^\beta \}$ and  $\{v_h^\alpha, v_h^\beta\}$ a pair of  $L$-bases for $V_g$ and $V_h$ consisting of eigenvectors with eigenvalue $\alpha_g, \beta_g, \alpha_h, \beta_h$ respectively. Denote $V_g^\alpha, V_g^\beta$, etc. the corresponding eigenspaces. Set $V_{gh}^{\alpha \alpha} := V_g^\alpha \otimes V_h^\alpha$ and $V^{\alpha\alpha}:=V_f\otimes V_{gh}^{\alpha \alpha}$, and likewise for the remaining pairs of eigenvalues.
There is a decomposition of $L_p[G_{\Q_p}]$-modules
\begin{equation}\label{eq: decomposition in eigenspaces}
	V:= V_{fgh} = V^{\alpha\alpha}\oplus V^{\alpha\beta}\oplus V^{\beta\alpha}\oplus V^{\beta\beta}.
\end{equation}

%Denote $V_{gh}^{\triangle\heartsuit}:=V_g^\triangle\otimes V_h^\heartsuit$ for $\triangle,\heartsuit\in\{ \alpha,\beta  \}$ and fix once and for all a completion $L_p$ of $L$ with respect to a prime above $p$.

\begin{lemma}\label{lem: tipo lem 4.1 of DR3}
	Let $\triangle,\heartsuit\in\{\alpha,\beta\}$. Then
		\begin{enumerate}[$i)$]
			\item the Bloch--Kato logarithm gives an isomorphism 	\begin{equation*}
			\log_{\triangle\heartsuit}:\coh_f^1(\Q_p,V^{\triangle\heartsuit})=\coh^1_f(\Q_p,V_f^+\otimes V_{gh}^{\triangle\heartsuit})\overset{\cong}{\longrightarrow} L_p;
			\end{equation*}
			\item the Bloch--Kato dual exponential gives an isomorphism 	\begin{equation*}
			\exp^*_{\triangle\heartsuit}:\coh_s^1(\Q_p,V^{\triangle\heartsuit})=\coh^1_s(\Q_p,V_f^-\otimes V_{gh}^{\triangle\heartsuit})\overset{\cong}{\longrightarrow} L_p;.
			\end{equation*}
		\end{enumerate}

\end{lemma}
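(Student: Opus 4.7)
The plan is to identify each $V^{\triangle\heartsuit}$ as a one-dimensional $L_p[G_{\Q_p}]$-module of the shape $L_p(\eta)(j)$ with $\eta$ unramified of finite order and $j\in\{0,1\}$, and then deduce both claims from the standard Bloch--Kato formalism for such representations. Since $p\nmid N_gN_h$ by \eqref{eq: assumption on levels}, the Artin representations $V_g$ and $V_h$ are unramified at $p$, so $V_{gh}^{\triangle\heartsuit}:=V_g^{\triangle}\otimes V_h^{\heartsuit}$ is one-dimensional and unramified, with $\Frob_p$-eigenvalue $\triangle_g\cdot\heartsuit_h\in\{\pm a\}$ by \eqref{ah-bh} combined with $\alpha_g=-\beta_g$ and $\alpha_h=-\beta_h$. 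Using $V_f^+\cong L_p(\chi_E)(1)$ from \eqref{eq: explicit Tf+, Tf-} and the description $V_f^-\cong L_p(\psi_f)$ with $\psi_f(\Frob_p)=a$ recalled in \S\ref{sec: filtrations}, we obtain
$$V_f^+\otimes V_{gh}^{\triangle\heartsuit}\cong L_p(\eta)(1),\qquad V_f^-\otimes V_{gh}^{\triangle\heartsuit}\cong L_p(\eta'),$$
with $\eta,\eta'$ unramified characters satisfying $\eta(\Frob_p)=\eta'(\Frob_p)=a\cdot\triangle_g\heartsuit_h\in\{\pm 1\}$.

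For (i), the module $W=L_p(\eta)(1)$ is crystalline with single Hodge--Tate weight $-1$, hence $\fil^0\Dderham(W)=0$ and $\Dderham(W)/\fil^0$ is a one-dimensional $L_p$-vector space. The crystalline Frobenius on $\Dcris(W)$ acts as a nonzero scalar of the form $\pm p^{-1}$, which is distinct from $1$ since $p$ is odd. The Bloch--Kato fundamental exact sequence then yields an isomorphism $\exp\colon\Dderham(W)/\fil^0\overset{\cong}{\lra}\coh^1_f(\Q_p,W)$; its inverse, composed with a choice of $L_p$-basis of $\Dderham(W)/\fil^0$, is the desired $\log_{\triangle\heartsuit}$.

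Part (ii) is then obtained from (i) via local Tate duality. The self-duality $V_f^\vee(1)\cong V_f$ (valid because $f$ has trivial nebentype) restricts along the local filtration of $V_f$ to give $(V_f^-)^\vee(1)\cong V_f^+$, whence
$$(V_f^-\otimes V_{gh}^{\triangle\heartsuit})^\vee(1)\cong V_f^+\otimes(V_{gh}^{\triangle\heartsuit})^\vee,$$
which is of the type treated in (i). Since $\exp^*$ is the Tate dual of the Bloch--Kato logarithm, applying (i) to $W^\vee(1)$ and dualising produces the isomorphism $\exp^*_{\triangle\heartsuit}\colon\coh^1_s(\Q_p,V^{\triangle\heartsuit})\overset{\cong}{\lra}\fil^0\Dderham(V^{\triangle\heartsuit})\cong L_p$. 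The main subtlety, and essentially the only obstacle, is the ``exceptional'' pair $(\triangle,\heartsuit)\in\{(\alpha,\beta),(\beta,\alpha)\}$ for which $\eta'=1$ and $V_f^-\otimes V_{gh}^{\triangle\heartsuit}$ is the trivial representation: in this case $\coh^1_f$ becomes one-dimensional and $\coh^1$ has $L_p$-dimension two, yet $\coh^1_s$ is still one-dimensional and $\exp^*$ remains surjective onto $\fil^0\Dderham$ by the standard Bloch--Kato theory, so the claim persists. This trivial-zero phenomenon is precisely the mechanism underlying the ``improved'' diagonal class emphasised in the introduction.
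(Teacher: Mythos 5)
Your treatment of the one-dimensional graded pieces is essentially right and agrees with the second half of the paper's argument: both identify $V_f^+\otimes V_{gh}^{\triangle\heartsuit}$ and $V_f^-\otimes V_{gh}^{\triangle\heartsuit}$ with $L_p(\zeta\chi_{\cycl})$ and $L_p(\zeta)$ for an unramified character $\zeta$ with $\zeta(\Frob_p)=\triangle\cdot\heartsuit\cdot a\in\{\pm1\}$, and then apply the standard Bloch--Kato computations for such twists. The genuine gap is at the first step. The representation $V^{\triangle\heartsuit}=V_f\otimes V_{gh}^{\triangle\heartsuit}$ is \emph{two}-dimensional (only its graded pieces are lines), so it cannot be ``identified as a one-dimensional module of the shape $L_p(\eta)(j)$''. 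The equalities
\begin{equation*}
\coh^1_f(\Q_p,V^{\triangle\heartsuit})=\coh^1_f(\Q_p,V_f^+\otimes V_{gh}^{\triangle\heartsuit}),\qquad \coh^1_s(\Q_p,V^{\triangle\heartsuit})=\coh^1_s(\Q_p,V_f^-\otimes V_{gh}^{\triangle\heartsuit})
\end{equation*}
are assertions forming part of the lemma, and they are where the real content lies: since $V_f$ is semistable non-crystalline at $p$, the Bloch--Kato finite subspace of the extension $0\to V_f^+\otimes W\to V_f\otimes W\to V_f^-\otimes W\to 0$ is not determined by the graded pieces, and your dimension counts for the lines do not address it. The paper settles this by invoking \cite[Lemma 2.4.1]{DR19Stark}, which shows that $\coh^1_f(\Q_p,V^{\triangle\heartsuit})$ equals $\coh^1_f(\Q_p,V_f^+\otimes V_{gh}^{\triangle\heartsuit})$ when $\triangle\cdot\heartsuit\cdot a=+1$ and equals the \emph{full} $\coh^1(\Q_p,V_f^+\otimes V_{gh}^{\triangle\heartsuit})$ when $\triangle\cdot\heartsuit\cdot a=-1$; in the latter case one checks separately that for $\zeta$ nontrivial the full $\coh^1$ of $L_p(\zeta\chi_{\cycl})$ coincides with its finite part. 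Your closing discussion of an ``exceptional pair'' concerns the trivial zero of the quotient line $V_f^-\otimes V_{gh}^{\triangle\heartsuit}$, which is not the relevant comparison.

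A smaller issue in part (ii): the Tate dual of the Bloch--Kato exponential naturally gives an injection of $\coh^1/\coh^1_g$ into $\fil^0\Dderham$, whereas $\coh^1_s$ is defined as $\coh^1/\coh^1_f$. You therefore also need $\coh^1_g=\coh^1_f$ for the unramified line $L_p(\eta')$ (true here, because $\Dcris$ of its Tate dual has no Frobenius-fixed vectors), and again the corresponding statement for the two-dimensional $V^{\triangle\heartsuit}$ must be reduced to the quotient line before this applies.
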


\begin{proof}
	Let $W:=V_{gh}^{\triangle\heartsuit}$. By \cite[Lemma 2.4.1]{DR19Stark}
		\begin{equation}\label{eq: cases for H1f}
			\coh^1_f(\Q_p,V^{\triangle\heartsuit})=\begin{cases}
			\coh^1_f(\Q_p,V_f^+\otimes V_{gh}^{\triangle\heartsuit})	& \text{ if } \triangle\cdot\heartsuit\cdot a=+1\\
			\coh^1(\Q_p,V_f^+\otimes V_{gh}^{\triangle\heartsuit}) & \text{ if } \triangle\cdot\heartsuit\cdot a=-1.
			\end{cases}
		\end{equation}
		Moreover, in the case in which $\triangle\cdot\heartsuit\cdot a=-1$, we have 
		\begin{equation*}
			\coh^1(\Q_p,V_f^+\otimes V_{gh}^{\triangle\heartsuit})=\coh^1_f(\Q_p,V_f^+\otimes V_{gh}^{\triangle\heartsuit})
		\end{equation*}
		(see \cite[Example 3.1.4]{DR19padic}). 
		Since $V_f^+\cong\Q_p(1)(\chi_E)$, $ i)$ follows from the fact that 
		\begin{equation*}
			V_f^+\otimes V_{gh}^{\triangle\heartsuit}=L_p(\zeta\chi_{\cycl})
		\end{equation*}
		where $\zeta:G_{\Q_p}\longrightarrow\Q_p^\times$ is the unramified character given by $\zeta(\Frob_p)=\triangle\cdot\heartsuit\cdot a$ (for more details, see e.g. \cite[Example 3.1.4]{DR19padic}).  The statement on the singular quotients is proven similarly using \eqref{eq: cases for H1f} and the fact that 
		\begin{equation*}
		V_f^-\otimes V_{gh}^{\triangle\heartsuit}=L_p(\zeta).
		\end{equation*}
\end{proof}

Set $L_p^4= L_p\oplus L_p\oplus L_p\oplus L_p$. We obtain from the previous lemma isomorphisms
 	\begin{align*}
 	\log:\coh^1_f(\Q_p,V)\longrightarrow L_p^4, \qquad
 	\exp^*:\coh^1_s(\Q_p,V)\longrightarrow L_p^4.
 	\end{align*}
 	where
 	$\log=\log_{\alpha\alpha}\oplus\log_{\alpha\beta}\oplus\log_{\beta\alpha}\oplus\log_{\beta\beta}$ and 	$\exp^*=\exp^*_{\alpha\alpha}\oplus\exp^*_{\alpha\beta}\oplus\exp^*_{\beta\alpha}\oplus\exp^*_{\beta\beta}$.
 	
 Combining Lemma \ref{lem: relaxed selmer iso with H1s} with Lemma \ref{lem: tipo lem 4.1 of DR3} we obtain the following result on the $L_p$-structure of the relaxed Selmer group attached to $V$.
 	
 \begin{corollary}\label{cor: Sel(p) has dimension 4} 
 	There are isomorphisms
 	\begin{equation*}
 		\Sel_{(p)}(V)\overset{\partial_p}{\longrightarrow}\coh^1_s(\Q_p,V)\overset{\exp^*}{\longrightarrow}L_p^4.
 	\end{equation*} 	
 \end{corollary}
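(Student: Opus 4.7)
The plan is essentially to assemble the two preceding results. First I would invoke Lemma \ref{lem: relaxed selmer iso with H1s} to obtain the first isomorphism $\partial_p \colon \Sel_{(p)}(V) \overset{\cong}{\lra} \coh^1_s(\Q_p,V)$, which already reduces the problem to identifying the $L_p$-structure of the local singular quotient at $p$.

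Next, I would use the decomposition \eqref{eq: decomposition in eigenspaces} of $V$ as an $L_p[G_{\Q_p}]$-module into the four one-dimensional eigenspace pieces $V^{\triangle\heartsuit}$ for $\triangle,\heartsuit\in\{\alpha,\beta\}$. Since this is a decomposition of the restriction to $G_{\Q_p}$, taking local cohomology and then the singular quotient commutes with the direct sum, so
\begin{equation*}
\coh^1_s(\Q_p,V) \;=\; \bigoplus_{\triangle,\heartsuit\in\{\alpha,\beta\}} \coh^1_s(\Q_p,V^{\triangle\heartsuit}).
\end{equation*}
Then I would apply part $ii)$ of Lemma \ref{lem: tipo lem 4.1 of DR3} to each of the four summands, yielding $\coh^1_s(\Q_p,V^{\triangle\heartsuit}) \overset{\exp^*_{\triangle\heartsuit}}{\cong} L_p$. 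Putting the four maps together produces the desired isomorphism $\exp^* \colon \coh^1_s(\Q_p,V) \overset{\cong}{\lra} L_p^4$.

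There is no real obstacle here; the only small subtlety worth verifying is that the eigenspace decomposition \eqref{eq: decomposition in eigenspaces}, which is only defined after restriction to $G_{\Q_p}$ (and depends on the choice of ordering $(\alpha_g,\beta_g)$ and $(\alpha_h,\beta_h)$), is genuinely $G_{\Q_p}$-stable, so that the induced splitting of $\coh^1_s(\Q_p,V)$ makes sense and is compatible with the componentwise application of the dual exponential. Once this is noted, the statement follows by composing the two displayed isomorphisms.
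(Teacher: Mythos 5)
Your proposal is exactly the paper's argument: the paper states that the corollary follows by combining Lemma \ref{lem: relaxed selmer iso with H1s} (giving $\partial_p$) with Lemma \ref{lem: tipo lem 4.1 of DR3} applied to each summand in the eigenspace decomposition \eqref{eq: decomposition in eigenspaces}. The observation that \eqref{eq: decomposition in eigenspaces} is a decomposition of $L_p[G_{\Q_p}]$-modules, so that local cohomology and singular quotients split accordingly, is the correct and sufficient check.
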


We now turn to pin down a particular basis of $\Sel_{(p)}(V)$ which is canonical up to multiplication by elements in $L^\times$. In order to do this, we need a more precise description of the finite and singular parts of $\coh^1(\Q_p,V^{\heartsuit\triangle})$ in terms of $\coh^1(K_p,V_f)$. Note that the latter space is equipped with a natural action of $\Gal(K_p/\Q_p)$.  

In what follows, for any $\Gal(K_p/\Q_p)$-module $M$ we denote $M^{\pm}$ the subspace of $M$ on which $\Frob_p$ acts as multiplication by $\pm1$. Note that $\alpha_g\alpha_h, \alpha_g\beta_h, \beta_g \alpha_h, \beta_g\beta_h \in \{ \pm 1\}$ and thus it makes sense to consider $M^{\alpha_g\alpha_h}$, etc.

%, where $K_p$ is the completion of $K$ at $p$. Recall that $p$ is inert in $K$, so $K_p$ is a quadratic extension of $\Q_p$.

\begin{lemma}\label{lem: H1(Q,V) and H(K,Vf)}
	Let $\heartsuit,\triangle\in\{\alpha,\beta\}$.
	There are canonical isomorphisms of $L_p$-vector spaces
	\begin{equation*}
		\coh^1_f(\Q_p,V_f\otimes V_{gh}^{\heartsuit\triangle}) \cong \coh^1_f(K_p,V_f)^{\triangle\heartsuit}\otimes V_{gh}^{\heartsuit\triangle} 
	\end{equation*}
	and
	\begin{equation*}
	\coh^1_s(\Q_p,V_f\otimes V_{gh}^{\heartsuit\triangle}) \cong \coh^1_s(K_p,V_f)^{\triangle\heartsuit} \otimes V_{gh}^{\heartsuit\triangle}.
	\end{equation*}
\end{lemma}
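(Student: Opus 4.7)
The plan is to relate $G_{\Q_p}$-cohomology to $G_{K_p}$-cohomology via Hochschild--Serre, exploiting that the one-dimensional twist $V_{gh}^{\heartsuit\triangle}$ becomes trivial upon restriction to $G_{K_p}$.

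First I would observe that, because $p$ is inert in $K$ and $p\nmid N_gN_h$, the rank one $L_p[G_{\Q_p}]$-module $V_{gh}^{\heartsuit\triangle}=V_g^\heartsuit\otimes V_h^\triangle$ is unramified with $\Frob_p$ acting as multiplication by the scalar $\heartsuit\triangle\in\{\pm1\}$ (cf.\,\eqref{ah-bh} and the discussion preceding Lemma \ref{lem: H1(Q,V) and H(K,Vf)}). Since $K_p/\Q_p$ is the unramified quadratic extension, $\Frob_{K_p}=\Frob_p^2$ acts on $V_{gh}^{\heartsuit\triangle}$ as $(\heartsuit\triangle)^2=1$, so $V_{gh}^{\heartsuit\triangle}|_{G_{K_p}}$ is the trivial $G_{K_p}$-module.

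Second, I would invoke Hochschild--Serre for $K_p/\Q_p$. Since $\#\Gal(K_p/\Q_p)=2$ is invertible in $L_p$, the higher group cohomology of $\Gal(K_p/\Q_p)$ on any $L_p$-vector space vanishes, and the spectral sequence collapses to
$$H^1(\Q_p,W)\cong H^1(K_p,W)^{\Gal(K_p/\Q_p)}$$
for any $L_p[G_{\Q_p}]$-module $W$. Applying this with $W=V_f\otimes V_{gh}^{\heartsuit\triangle}$ and using the triviality of the restriction $V_{gh}^{\heartsuit\triangle}|_{G_{K_p}}$, we identify
$$H^1(K_p,W)\;=\;H^1(K_p,V_f)\otimes_{L_p}V_{gh}^{\heartsuit\triangle}$$
equivariantly for the diagonal $\Gal(K_p/\Q_p)$-action (natural on the first factor, multiplication by $\heartsuit\triangle$ on the second). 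An element $x\otimes v$ is $\Frob_p$-fixed iff $\Frob_p(x)=(\heartsuit\triangle)^{-1}x=(\triangle\heartsuit)\,x$, which yields the claimed isomorphism
$$H^1(\Q_p,V_f\otimes V_{gh}^{\heartsuit\triangle})\;\cong\;H^1(K_p,V_f)^{\triangle\heartsuit}\otimes_{L_p}V_{gh}^{\heartsuit\triangle}.$$

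Finally, to upgrade this to an isomorphism of finite and singular parts, I would run the same argument for $W\otimes\BB_\cris$ in place of $W$. Because $V_{gh}^{\heartsuit\triangle}$ is unramified, hence crystalline, the $G_{K_p}$-action on $V_{gh}^{\heartsuit\triangle}\otimes\BB_\cris$ is still ``trivial on the twist'', and the defining map $H^1(-,W)\to H^1(-,W\otimes\BB_\cris)$ is functorial in restriction. This matches $H^1_f(\Q_p,W)$ with $H^1_f(K_p,V_f)^{\triangle\heartsuit}\otimes V_{gh}^{\heartsuit\triangle}$. The statement for the singular quotients follows by taking $\Gal(K_p/\Q_p)$-invariants of the short exact sequence $0\to H^1_f\to H^1\to H^1_s\to 0$, which remains exact since $\Gal(K_p/\Q_p)$-invariants are exact on $L_p$-vector spaces. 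The main (mild) obstacle is checking this compatibility with the Bloch--Kato finite condition; once the Hochschild--Serre reduction is in place, the rest is formal.
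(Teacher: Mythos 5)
Your argument is correct, and it reaches the same identifications as the paper, but it handles the Bloch--Kato conditions differently. The paper does not apply Hochschild--Serre directly to $V_f\otimes V_{gh}^{\heartsuit\triangle}$: it first invokes Lemma \ref{lem: tipo lem 4.1 of DR3} (which rests on \cite[Lemma 2.4.1]{DR19Stark}) to replace $\coh^1_f(\Q_p,V_f\otimes V_{gh}^{\heartsuit\triangle})$ by $\coh^1_f(\Q_p,V_f^+\otimes V_{gh}^{\heartsuit\triangle})$, then applies inflation--restriction to the explicit module $\Q_p(1)\otimes L_p(\chi)$ and concludes via the Kummer-theoretic identification \eqref{prop: H1f Kp, E}. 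You instead keep the full $V_f$ and argue that the crystalline condition itself commutes with restriction along the unramified quadratic extension; this is the one step you should make explicit, namely that the kernel of $\coh^1(\Q_p,W\otimes\BB_\cris)\to\coh^1(K_p,W\otimes\BB_\cris)$ is a $2$-torsion subgroup of a $\Q_p$-vector space and hence vanishes, so that $\coh^1_f(\Q_p,W)=\coh^1_f(K_p,W)^{\Gal(K_p/\Q_p)}$. Granting that, your route is more self-contained (it avoids the external input behind Lemma \ref{lem: tipo lem 4.1 of DR3} and the passage through $V_f^+$), and your treatment of the singular quotients by exactness of $\Gal(K_p/\Q_p)$-invariants is cleaner than the paper's ``similar computations''; what the paper's route buys in exchange is the explicit description of $\coh^1_f$ in terms of $E(K_p)\otimes\Q_p$ and $\cO_{K_p}^\times$, which it needs anyway for the later comparison with local points. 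Also note that since $\heartsuit\triangle=\pm1$, the superscripts $\triangle\heartsuit$ and $\heartsuit\triangle$ denote the same eigenvalue, so your computation of the invariants does produce exactly the stated eigenspace.
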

\begin{proof}
		Let $\chi$ be the quadratic character of $G_{\Q_p}$ with $\chi(\Frob_p)=-1$. We describe the  isomorphisms in the case in which $E$ has split multiplicative reduction at $p$ and $\triangle\cdot\heartsuit=-1$. In this setting,
			%\begin{align*}
			%\coh^1_f(\Q_p,V^{\triangle\heartsuit}) & = 	\coh^1_f(\Q_p,V_f^+\otimes V_{gh}^{\triangle\heartsuit}) = 	\coh^1(\Q_p,V_f^+\otimes V_{gh}^{\triangle\heartsuit}) & \text{ by Lemma \ref{lem: tipo lem 4.1 of DR3}}\\
			%& = \coh^1(\Q_p,\Q_p(1)\otimes L_p(\chi)) & \text{by Lemma \ref{eq: filtration Vphi} (ii)}  \\
			%& \cong \coh^1(K_p,\Q_p(1)\otimes L_p(\chi))^{\Gal(K_p/\Q_p)} & \text{ by the inflation-restriction exact sequence} \\
			%& \cong (\coh^1(K_p,\Q_p(1))\otimes L_p(\chi))^{\Gal(K_p/\Q_p)} & \text{ because $\chi_{|G_{K_p}}=1$} \\
			%& \cong (K_p^\times\otimes L_p(\chi))^{G_{\Q_p}} &  \text{ by Kummer theory}\\
			%& \cong (\cO_{K_p}^\times\otimes L_p(\chi))^{G_{\Q_p}} &  \text{ as $\Frob_p$ acts trivially on $p^\Z$} \\
			%& \cong (\coh_f^1(K_p,V_f)\otimes V_{gh}^{\triangle\heartsuit})^{G_{\Q_p}} &  \text{ by Proposition \ref{prop: H1f Kp, E}, }
			%\end{align*}
\begin{align*}
\coh^1_f(\Q_p,V^{\triangle\heartsuit}) & = 	\coh^1_f(\Q_p,V_f^+\otimes V_{gh}^{\triangle\heartsuit}) & \text{ by Lemma \ref{lem: tipo lem 4.1 of DR3}}\\
			& = \coh^1_f(\Q_p,\Q_p(1)\otimes L_p(\chi)) &  \text{by Lemma \ref{eq: filtration Vphi} (ii)}  \\
			& = \coh^1_f(K_p,\Q_p(1)\otimes L_p(\chi))^{G_{\Q_p}} & \text{ by the inflation-restriction exact sequence} \\
			& = (\coh^1_f(K_p,\Q_p(1))\otimes L_p(\chi))^{G_{\Q_p}} & \text{ because $\chi_{|G_{K_p}}=1$} \\
			& = (\coh_f^1(K_p,V_f)\otimes L_p(\chi))^{G_{\Q_p}} &    \text{ by \eqref{prop: H1f Kp, E}.}
\end{align*}
The claim follows, because $V_{gh}^{\triangle\heartsuit}=L_p(\chi)$ as $G_{\Q_p}$-modules and the subspace of $G_{\Q_p}$-invariants of $\coh_f^1(K_p,V_f)\otimes V_{gh}^{\triangle\heartsuit}$ is $\coh^1_f(K_p,V_f)^{\triangle\heartsuit}\otimes V_{gh}^{\heartsuit\triangle}$.
Using similar computations as above, one also verifies that
\begin{equation*}
	\coh^1_s(\Q_p,V^{\triangle\heartsuit}) \cong(\coh^1(K_p,V_f)\otimes V_{gh}^{\triangle\heartsuit})^{G_{\Q_p}}/(\coh^1_f(K_p,V_f)\otimes V_{gh}^{\triangle\heartsuit})^{G_{\Q_p}} \cong(\coh^1_s(K_p,V_f)\otimes V_{gh}^{\triangle\heartsuit})^{G_{\Q_p}}.
\end{equation*}
The remaining cases are proven similarly. 
\end{proof}

\begin{remark}\label{rem: logaritmos}  
The logarithm maps of Lemma \ref{lem: tipo lem 4.1 of DR3} are twisted versions of \eqref{eq: log bk E}. 
%by Lemma \ref{lem: H1(Q,V) and H(K,Vf)} we  have
%\begin{equation*}
%\coh^1_f(\Q_p,V^{\alpha\beta})\overset{\cong}{\longrightarrow} \coh^1_f(K_p,V_f)^{{\beta\alpha}}\otimes V^{\alpha\beta}\
%\end{equation*}
%Moreover, via Tate's uniformization and Kummer map, we have isomorphisms
%\begin{equation}
%\coh^1_f(K_p,V_f)^-\cong E(K_p)^-\otimes\Q_p = \cO_{K_p}^\times(\chi_E)^{-}\otimes\Q_p 
%\end{equation}
More precisely, $\log_{\alpha\beta}$ may be accordingly recast  as

\begin{align}\label{logab}
\begin{array}{ccccc}
H^1_f(\Q_p,V_f\otimes  V_{gh}^{\alpha\beta}) = & E(K_p)^{\beta\alpha}\otimes V_{gh}^{\alpha\beta} & \longrightarrow &  K_p(\chi_E)^{\beta\alpha}\otimes K_p^{\alpha\beta}\otimes L_p\cong L_p\\
& x\otimes v_g^\alpha v_h^\beta & \longmapsto & \log_{E}(x)\cdot \langle v_g^\alpha v_h^\beta , \eta_{g_\alpha}\omega_{h_\alpha}\rangle.
\end{array}
\end{align} 
Using Proposition \ref{prop: exp of Vf} we can define the following generators $k^+$ and $k^-$ of $K_p(\chi_E)^+$ and $K_p(\chi_E)^-$ respectively:  \begin{equation*}
	k^+:=\begin{cases}
	1 & \text{ if } a=+1;\\ z & \text{ if }a=-1
	\end{cases} \ \text{ and } \ 	k^-:=\begin{cases}
	z & \text{ if }a=+1;\\ 1 & \text{ if } a=-1. 
	\end{cases}
	\end{equation*}
Using \eqref{eq: pairing between omega and eta} and \eqref{eq: definition of Omega and Theta}, the latter pairing is 
\begin{equation*}
\langle v_g^\alpha v_h^\beta , \eta_{g_\alpha}\omega_{h_\alpha}\rangle  = \frac{1}{\Omega_{g_\alpha}\Theta_{h_\alpha}}\langle \omega_{g_\alpha}\eta_{h_\alpha},\eta_{g_\alpha}\omega_{h_\alpha}\rangle = \frac{1}{\Omega_{g_\alpha}\Theta_{h_\alpha}}\otimes1\in K_p^{\alpha\beta}\otimes L_p,
\end{equation*}
and the right-most isomorphism of \eqref{logab} is given by 
\begin{align*}
\begin{array}{ccccc}
K_p(\chi_E)^{\beta\alpha}\otimes K_p^{\alpha\beta}& \overset{\cong}{\longrightarrow} & K_p(\chi_E)^+ &\overset{\cong}{\longrightarrow}& \Q_p\\
x\otimes y & \longmapsto &xy & & \\
& & k^{+} & \longmapsto & 1.
\end{array}
\end{align*}

Analogously, the dual exponential maps of Lemma \ref{lem: tipo lem 4.1 of DR3} are twisted versions of \eqref{eq: exp per Vf}, and $\exp^*_{\alpha\beta}$ may be recast  under the identification provided by Lemma \ref{lem: H1(Q,V) and H(K,Vf)} as
\begin{align*}
\begin{array}{ccccc}
\exp^*_{\alpha\beta}:  \coh^1_s(K_p,V_f)^{\beta\alpha} \otimes V_{gh}^{\alpha\beta} & \longrightarrow &  L_p\\
x\otimes v_g^\alpha v_h^\beta & \longmapsto &  \exp^*(x)\cdot\langle v_g^\alpha v_h^\beta , \eta_{g_\alpha}\omega_{h_\alpha}\rangle.
\end{array}
\end{align*}

\end{remark}
We will still denote by 
\begin{equation}\label{eq: exp pm onto Qp}
\exp_\pm^*:\coh_s^1(K_p,V_f)^\pm\longrightarrow\Q_p
\end{equation} the composition of \eqref{eq: exp per Vf} with the isomorphism $K_p(\chi_E)^\pm\cong\Q_p$ sending $k^\pm$ to $1$.

\begin{corollary}\label{cor: basis of Sel(p)}
	 Define 
	%\footnote{V a F: Esta definicion de $X_{-}$ no tiene sentido si no la arreglamos un poco. El morfismo $\Exp^*_{-}$ dices que va a $K_p^{-}$, pero el $1$ no pertenece a $K_p^{-}$. Tenemos que fijar un generador de $K_p^{-}$ para tener fijado un isomofismo $K_p^{-}=\Q_p$ o como lo piensas tu? En el apendice me parece recordar que fijabas un elemento asi, no?}
	 $X_\pm\in \coh^1_s(K_p,V_f)^{\pm}$ such that $\exp^*_\pm(X_\pm)=1$. 
The $p$-relaxed Selmer group $\Sel_{(p)}(V)$ admits a basis
\begin{equation*}
	\{ {\xi}^{\alpha\alpha}, {\xi}^{\alpha\beta}, {\xi}^{\beta\alpha}, {\xi}^{\beta\beta} \}
\end{equation*}	
 characterized as
\begin{align*}
	\partial_p{\xi}^{\alpha\alpha}=X_{{\alpha\alpha}}\otimes v_g^\alpha\otimes v_h^\alpha, & \qquad \partial_p{\xi}^{\alpha\beta}=X_{{\alpha\beta}}\otimes v_g^\alpha\otimes v_h^\beta, \\ \partial_p{\xi}^{\beta\alpha}=X_{{\beta\alpha}}\otimes v_g^\beta\otimes v_h^\alpha, & \qquad	\partial_p{\xi}^{\beta\beta}=X_{{\beta\beta}} \otimes v_g^\beta\otimes v_h^\beta.
\end{align*}
\end{corollary}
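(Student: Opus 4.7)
The proof is essentially an assembly of the three preceding results: Corollary \ref{cor: Sel(p) has dimension 4}, Lemma \ref{lem: H1(Q,V) and H(K,Vf)}, and Proposition \ref{prop:1}. The plan is to show that each of the four ``pieces'' of $\coh^1_s(\Q_p,V)$ singled out by the decomposition \eqref{eq: decomposition in eigenspaces} is one-dimensional with a canonical generator determined by $X_\pm$, and then to lift the resulting basis of $\coh^1_s(\Q_p,V)$ to a basis of $\Sel_{(p)}(V)$ via the isomorphism $\partial_p$.

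First, by Corollary \ref{cor: Sel(p) has dimension 4}, the map $\partial_p: \Sel_{(p)}(V) \overset{\cong}{\longrightarrow} \coh^1_s(\Q_p,V)$ is an isomorphism of four-dimensional $L_p$-vector spaces, so to prescribe a basis of $\Sel_{(p)}(V)$ it suffices to prescribe one of $\coh^1_s(\Q_p,V)$. Decomposition \eqref{eq: decomposition in eigenspaces} of $V$ as a $G_{\Q_p}$-module yields
\begin{equation*}
\coh^1_s(\Q_p,V) \, = \, \bigoplus_{\heartsuit,\triangle \in \{\alpha,\beta\}} \coh^1_s(\Q_p,V^{\heartsuit\triangle}),
\end{equation*}
and Lemma \ref{lem: H1(Q,V) and H(K,Vf)} identifies each summand with $\coh^1_s(K_p,V_f)^{\triangle\heartsuit} \otimes V_{gh}^{\heartsuit\triangle}$. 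Since $v_g^\heartsuit \otimes v_h^\triangle$ is a nonzero element of the one-dimensional space $V_{gh}^{\heartsuit\triangle}$ and, by Proposition \ref{prop:1}, each eigenspace $\coh^1_s(K_p,V_f)^\pm$ is one-dimensional (generated by $\bar{x}_\cycl$ or $\bar{x}_\anti$ depending on the sign of $a$), the tensor $X_{\triangle\heartsuit} \otimes v_g^\heartsuit \otimes v_h^\triangle$ is a nonzero element of the corresponding summand, hence a basis thereof.

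Second, by the non-degeneracy of the $\exp^*$-pairing recorded in Lemma \ref{lem: tipo lem 4.1 of DR3} (see also Remark \ref{rem: logaritmos}), the element $X_\pm$ is unambiguously defined by the normalisation $\exp^*_\pm(X_\pm)=1$. Concatenating these local generators across the four indices $(\heartsuit,\triangle)$ produces a basis of $\coh^1_s(\Q_p,V)$, and the classes $\xi^{\heartsuit\triangle} := \partial_p^{-1}(X_{\triangle\heartsuit} \otimes v_g^\heartsuit \otimes v_h^\triangle) \in \Sel_{(p)}(V)$ are well-defined and form the desired basis. Uniqueness and well-definedness follow from the fact that both $\partial_p$ and the isomorphism of Lemma \ref{lem: H1(Q,V) and H(K,Vf)} are canonical $L_p$-linear isomorphisms.

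There is no real obstacle: all the heavy lifting has already been done in the preceding lemmas and corollaries. The only point that requires a bit of care is tracking the indices $\triangle\heartsuit$ versus $\heartsuit\triangle$ (which is dictated by Lemma \ref{lem: H1(Q,V) and H(K,Vf)}: the exponent on $\coh^1_s(K_p,V_f)$ is forced by the condition that the Frobenius action on $V_{gh}^{\heartsuit\triangle}$ be cancelled in order to land in the $G_{\Q_p}$-invariants), and keeping the sign convention \eqref{ah-bh} $\alpha_g\alpha_h=-a$ in mind when translating $\coh^1_s(K_p,V_f)^{\triangle\heartsuit}$ into the cyclotomic or anticyclotomic generator given by Proposition \ref{prop:1}. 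No further input is needed.
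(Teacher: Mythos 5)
Your proof is correct and follows essentially the same route as the paper's: combine Corollary \ref{cor: Sel(p) has dimension 4}, the eigenspace decomposition \eqref{eq: decomposition in eigenspaces}, and Lemma \ref{lem: H1(Q,V) and H(K,Vf)} to split $\coh^1_s(\Q_p,V)$ into four lines identified with $\coh^1_s(K_p,V_f)^{\triangle\heartsuit}\otimes V_{gh}^{\heartsuit\triangle}$, then transport the normalised generators $X_\pm$ back through $\partial_p^{-1}$. Your index bookkeeping (noting that $\triangle\heartsuit$ and $\heartsuit\triangle$ give the same sign, so $X_{\triangle\heartsuit}=X_{\heartsuit\triangle}$) and your appeal to $\alpha_g\alpha_h=-a$ for the translation via Proposition \ref{prop:1} match what the paper does implicitly.
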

\begin{proof}
By Corollary \ref{cor: Sel(p) has dimension 4} and using the decomposition \eqref{eq: decomposition in eigenspaces}, there are isomorphisms
\begin{equation}\label{eq: x}
	\Sel_{(p)}(V) \overset{\partial_p}{\longrightarrow} \coh^1_s(\Q_p,V) \cong \coh^1_s(\Q_p,V^{\alpha\alpha})\oplus\coh^1_s(\Q_p,V^{\alpha\beta})\oplus\coh^1_s(\Q_p,V^{\beta\alpha})\oplus\coh^1_s(\Q_p,V^{\beta\beta}).
\end{equation}
By Lemma \ref{lem: H1(Q,V) and H(K,Vf)},  each of the four components in \eqref{eq: x} is isomorphic to 
\begin{align*}
	%(\coh^1_s(K_p,V_f)\otimes V_{gh}^{\alpha\alpha})^G\oplus (\coh^1_s(K_p,V_f)\otimes V_{gh}^{\alpha\beta})^G\oplus(\coh^1_s(K_p,V_f)\otimes V_{gh}^{\beta\alpha})^G\oplus(\coh^1_s(K_p,V_f)\otimes V_{gh}^{\beta\beta})^G\\
 \coh^1_s(K_p,V_f)^{{\beta\beta}}\otimes V_{gh}^{\alpha\alpha}, \,  \coh^1_s(K_p,V_f)^{{\beta\alpha}}\otimes V_{gh}^{\alpha\beta}, \,  \coh^1_s(K_p,V_f)^{{\alpha\beta}}\otimes V_{gh}^{\beta\alpha}, \, \coh^1_s(K_p,V_f)^{{\alpha\alpha}}\otimes V_{gh}^{\beta\beta},
\end{align*}
respectively. \ These are in turn isomorphic to $V_{gh}^{\alpha\alpha}\oplus V_{gh}^{\alpha\beta}\oplus V_{gh}^{\beta\alpha}\oplus V_{gh}^{\beta\beta}$ via \ 
  $\exp^*_{{\alpha\alpha}}\oplus\exp^*_{{\alpha\beta}}\oplus\exp^*_{{\beta\alpha}}\oplus\exp^*_{{\beta\beta}}$. 
\end{proof}

\section{A special value formula for the triple product $p$-adic $L$-function in rank $0$}\label{sec: main thm 1}

	The aim of this section is to describe the $p$-adic $L$-value $I_p(f,g_\alpha,h_\alpha)$ in terms of the basis $\{ \xi^{\alpha\alpha}, \xi^{\alpha\beta}, \xi^{\beta\alpha}, \xi^{\beta\beta} \}$ of $\Sel_{(p)}(V)$ appearing in Corollary \ref{cor: basis of Sel(p)}. This section lies within the framework of the \textit{exceptional setting} of \cite{BSV19reciprocity} and we recall here the notation and the main results from loc.\,cit.\,that we shall use.

	\subsection{The triple product $p$-adic $L$-function}

	If $\varphi=\sum a_n(\varphi) q^n$ is a modular form  of weight $w$, level $M$ and nebentype character $\chi_{\varphi}$ and $p\nmid M$, the Hecke polynomial at $p$ of $\varphi$ is 
	$$x^2-a_p(\varphi)x+\chi_\varphi(p)p^{w-1}=(x-\alpha_\varphi)(x-\beta_\varphi),$$
	where we label the eigenvalues so that $\ord_p(\alpha_\varphi)\leq\ord_p(\beta_\varphi)$. Recall that if $\varphi$ is ordinary at $p$, then $\alpha_\varphi$ is a $p$-adic unit.
	If $p\mid M$ we have $\alpha_\varphi = a_p(\varphi)$. 
	
	Since $g$ has weight $w=1$,  both $\alpha_g$ and $\beta_g$ are $p$-adic units. Recall that we are assuming that $\alpha_g\neq\beta_g$, thus $g$ has two different ordinary $p$-stabilisations. We denote $g_\alpha$ and $g_\beta$ the stabilisation satisfying $U_pg_\alpha=\alpha_g g_\alpha$ and $U_pg_\beta=\beta_g g_\beta$ respectively. The same holds for $h$, and we denote similarly its $p$-stabilisations.
	Since $E$ has multiplicative reduction at $p$, the level of the newform is divisible by $p$ and it is $p$-stabilised, meaning that $f_\alpha=f$.

	Let $\bff=\sum_{n\geq1}a_n(\bff)\in\Lambda_{\bff}[[q]], \ \bfg\in\Lambda_\bfg[[q]], \ \bfh\in\Lambda_\bfh[[q]]$ be the Hida families passing through $f, g_\alpha, h_\alpha$ respectively, where $\Lambda_\bff,\Lambda_\bfg$ and $\Lambda_\bfh$ are finite flat extensions of the Iwasawa algebra $\Lambda$. These Hida families have tame level $N_f/p, N_g, N_h$ and tame character $1, \chi, \bar{\chi}$ respectively.

	Let $N:=$lcm$(N_f/p,N_g,N_h)$. A \textit{test vector} for $(\bff,\bfg,\bfh)$ is a triple of Hida families $(\ubff,\ubfg,\ubfh)$ of tame level $N$ such that $\ubff$ is of the form $\sum\lambda_d {\bf f}(q^d)\in \Lambda_{\bf f}[[q]]$ with $\lambda_d\in \Lambda_{\bf f}$, where $d$ runs over the divisors of $N/N_f$, and similarly for $\ubfg$ and $\ubfh$.

	Recall from \S\ref{sec: filtrations} the set $\cW_{\mathbf{fgh}}^{\circ}$ of cristalline points. Denote  $\cW_{\mathbf{fgh}}^{f}:=\{ (k,\ell,m)\in\cW_{\mathbf{fgh}}^{\circ}\mid k\geq\ell+m \}$ and  define $\cW_{\mathbf{fgh}}^{g}$ and $\cW_{\mathbf{fgh}}^{h}$ analogously. Triplets of points in $\cW_{\mathbf{fgh}}^{f}\cup\cW_{\mathbf{fgh}}^{g}\cup\cW_{\mathbf{fgh}}^{h}$ are called \textit{unbalanced}, and points in the complement $\cW_{\mathbf{fgh}}^{\bal}:=\cW_{\mathbf{fgh}}^{\circ}\smallsetminus(\cW_{\mathbf{fgh}}^{f}\cup\cW_{\mathbf{fgh}}^{g}\cup\cW_{\mathbf{fgh}}^{h})$ are called \textit{balanced}.
	
	Let 
	\begin{equation*}
	\L_p^f(\ubff,\ubfg,\ubfh), \L_p^g(\ubff,\ubfg,\ubfh), \L_p^h(\ubff,\ubfg,\ubfh):\cW_\bff\times\cW_\bfg\times\cW_\bfh\longrightarrow \C_p
	\end{equation*}
	be the triple product $p$-adic $L$-functions attached to $(\ubff,\ubfg,\ubfh)$ constructed in \cite{DR1}. %In \cite{Hsieh}, Hsieh found an explicit choice of test vectors for which the following improved interpolation formula holds.

	\subsection{Exceptional cases and improved Euler systems}\label{sec: exceptional cases}
	%We explain now more precisely the \textit{exceptional} setting and the \textit{improved} objects we mentioned above.
	
	As introduced in \eqref{sec: filtrations}, let $$\V = V_\bff\otimes V_\bfg\otimes V_\bfh\otimes_{\cO_{\mathbf{fgh}}}\Xi$$ and let
	\begin{equation*}
	\boldsymbol{\kappa}:=\boldsymbol{\kappa}(\bff,\bfg,\bfh)\in\coh^1(\Q, \V)
	\end{equation*}
	be the diagonal class constructed in  \cite{BSV19reciprocity} and \cite{DR19padic}.  
	By  \cite[Corollary 4.7.1]{BSV19reciprocity} and \cite[Proposition 3.5.7]{DR19padic}, the local class
	\begin{equation*}
	\boldsymbol{\kappa}_p:=\res_p(\boldsymbol{\kappa})\in\coh^1(\Q_p, \V)
	\end{equation*}
	belongs to $\coh^1(\Q_p,\fil^2(\V))$. Moreover, for $\varphi\in\{ f,g,h \}$, the $p$-adic $L$-function $\L_p^\varphi(\ubff,\ubfg,\ubfh)$ can be recast as the image of $\boldsymbol{\kappa}_p$ under the idoneous Perrin-Riou's $\Lambda$-adic logarithm: more precisely,   \cite[Theorem A]{BSV19reciprocity}, \cite[Theorem 2.29]{DR19padic} assert that
	\begin{equation}\label{eq: thmA bsv}
	\L_p^\varphi(\ubff,\ubfg,\ubfh)=\L_\varphi(\res_p(\boldsymbol{\kappa})),
	\end{equation}
	where $\L_\varphi:\coh^1(\Q_p,\fil^2(\V))\longrightarrow\cO_{\mathbf{fgh}}$ is the homomorphism described in \cite[Proposition 4.6.2]{BSV19reciprocity}, \cite[Proposition 3.5.6]{DR19padic}.
	
	Let $\cH$ denote the surface cut out by the equation $k=2+\ell-m$ in $\cW_\bff\times\cW_\bfg\times\cW_\bfh$ and let $\mathcal{F}_\cH$ be the fraction field of the ring of Iwasawa functions on $\cH$.
	%Let $\rho_{\cH}:\cO_{\mathbf{fgh}}\longrightarrow\cO_\cH$ denote the natural restriction map taking a function $F\in \cO_{\mathbf{fgh}}$ to its restriction $F_{|\cH}$.

	Define the two-variable meromorphic Iwasawa function
	\begin{equation*}
	\boldsymbol{\cE}_g:=	\cE_g(\bff,\bfg,\bfh):=1-\dfrac{a_p(\bfg_\ell)}{\chi(p)a_p(\bff_{\ell-m+2})a_p(\bfh_m)}\in \mathcal{F}_\cH.
	\end{equation*}
	
%	Define $\V_{|\cH_\bfg}:=\V\otimes_{\nu,\cO_{\mathbf{fgh}}}\cO_{\mathbf{gh}}$.
	
	Recall from the introduction that we have $p\mid\mid N_f$, $p\nmid N_g N_h$, and 	$\alpha_g\alpha_h=-a.$
	More explicitly, we have \begin{equation}\label{alfa e betas}
		\beta_g = -\alpha_g,  \qquad (\alpha_h,\beta_h) = \begin{cases}
		(-1/\alpha_g,1/\alpha_g) & \text{ if } a=+1;\\ (1/\alpha_g,-1/\alpha_g) & \text{ if } a=-1.
		\end{cases}
	\end{equation}
	Hence
	\begin{equation*}\label{eq: exceptional condition Eg} 
	\boldsymbol{\cE}_g(2,1,1)=1-\dfrac{\alpha_g\beta_h}{a}=0.  
	\end{equation*}
	This fact forces the vanishing of the class $\boldsymbol{\kappa}(2,1,1)$, as explained in \cite{BSV19reciprocity} (cf.\,also Prop. \ref{prop: definition of improved class} below).

%	\color{red}
%	Define also  the plane $\cK$ given by  $k=\ell+m$ in $\cW_\bff\times\cW_\bfg\times\cW_\bfh$ and set
%	\begin{equation*}
%	\boldsymbol{\cE}_f^*:=\boldsymbol{\cE}_f^*(\bff,\bfg,\bfh):=1-\dfrac{a_p(\bfg_\ell)a_p(\bfh_m)}{a_p(\bff_{\ell+m})}\in\cO_{\cK}.
%	\end{equation*}
%	Since $\dfrac{\alpha_g\alpha_h}{a}=1/a$, we have
%	\begin{equation}%\label{eq: exceptional condition (5)}
%	\boldsymbol{\cE}_f^*(2,1,1)=0 \qquad \text{ if and only if } \qquad a={\alpha_g\alpha_h}.
%	\end{equation}
%	
%	
%	Since $a= \pm 1$ and $\beta_h=-\alpha_h$,  it follows that in our setting either \eqref{eq: exceptional condition Eg} or  \eqref{eq: exceptional condition (5)} holds. 
%
%	\begin{remark}
%		The terminology introduced is justified by the fact that, in the setting of this paper, we are actually in presence of exceptional zeroes. More precisely, if ${a\alpha_g\alpha_h=1}$, the vanishing of $\boldsymbol{\cE}_f^*$ at $(2,1,1)$ implies that $\L_p^f(\bff,\bfg,\bfh)(2,1,1)=0$ because of the vanishing of the factor $\cE{(\bff_2,\bfg_1,\bfh_1)}$ in the interpolation property of Theorem \ref{thm: interpolation formula for Lpf}. On the other hand, if ${a\alpha_g\beta_h}=1$, then $\boldsymbol{\cE}_g(2,1,1)=0$, and this forces the vanishing of the class $\boldsymbol{\kappa}(2,1,1)$, as explained in \cite{BSV19reciprocity} (cf.\,also Prop. \ref{prop: definition of improved class} below).
%	\end{remark}
%
%The intersection $\cH\cap\cK$ of the two planes introduced above is a line containing the  points $(\ell+1,\ell,1)$ as a dense subset.
%	\color{black}	

 Let
 $\rho_{\cH}:\mathcal{F}_{\mathbf{fgh}}\longrightarrow\mathcal{F}_{\cH}$  be the map taking a function $F(k,\ell,m)$ to its restriction $F(2+\ell-m,\ell,1)$ to the plane $\cH$.  Denote
 %\footnote{En lugar de $\V_{|\cH\cap\cK}:=\V\otimes_{\rho_{\cH\cap\cK},\cO_{\mathbf{fgh}}}\Lambda_\bfg[1/p]$ no seria mas correcto $\V_{|\cH\cap\cK}:=\V\otimes_{\cO_{\mathbf{fgh}},\rho_{\cH\cap\cK}}\cO_{\cH\cap\cK}$?} 
  $\V_{|\cH}:=\V\otimes_{\mathcal{F}_{\mathbf{fgh}},\rho_{\cH}}\mathcal{F}_{\cH}$ and let $$\boldsymbol{\kappa}_{|\cH} := \rho_{\cH, \star}(\boldsymbol{\kappa})\in\coh^1(\Q,\V_{|\cH})$$ denote the restriction of $\boldsymbol{\kappa}$ to $\cH$.
 
%As shown in \cite{BSV19reciprocity} and recalled below, the class $\boldsymbol{\kappa}_{|\cH}$
%\color{red} and the $p$-adic $L$-function  $\L_p^f(\bff^\star,\bfg^\star,\bfh^\star)_{|\cH\cap\cK}$
%\color{black} factors in terms of an \textit{improved} object. 
		
	\begin{proposition}\label{prop: definition of improved class}
		There exists a global cohomology class
		\begin{equation*}
		\boldsymbol{\kappa}_g^*\in\coh^1(\Q,\V_{|\cH})
		\end{equation*}
		satisfying, for each $\ell,m \in \Z_{\geq 1}$:
		\begin{equation}\label{eq: kappa and kappa improved}
		\boldsymbol{\kappa}(2+\ell-m,\ell,m)=\boldsymbol{\cE}_g(2+\ell-m,\ell,m)\cdot\boldsymbol{\kappa}_g^*(2+\ell-m,\ell,m).
		\end{equation}
	\end{proposition}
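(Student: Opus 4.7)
The plan is to follow the strategy of \cite[\S8.3]{BSV19reciprocity}, where an analogous improved class is constructed in the exceptional setting $\alpha_g\alpha_h=-a$. The basic idea is to build $\boldsymbol{\kappa}_g^*$ directly by means of a modified diagonal cycle construction which avoids the ordinary $p$-stabilisation in the $\bfg$-variable; the discrepancy between this modified class and the restriction $\boldsymbol{\kappa}_{|\cH}$ will turn out to be exactly the Euler factor $\boldsymbol{\cE}_g$.

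First I would recall that the diagonal class $\boldsymbol{\kappa}\in\coh^1(\Q,\V)$ is obtained by push-forward of a universal diagonal cycle on a product of three Hida-theoretic Kuga--Sato varieties, followed by a componentwise ordinary projection in $(\bff,\bfg,\bfh)$. Over the plane $\cH=\{k=2+\ell-m\}$, the ordinary projection in the $\bfg$-variable contributes an extra Hecke-theoretic factor which, after a computation using the action of $U_p$ on the relevant space of $p$-adic modular forms, matches the expression
\begin{equation*}
1-\dfrac{a_p(\bfg_\ell)}{\chi(p)\,a_p(\bff_{\ell-m+2})\,a_p(\bfh_m)}=\boldsymbol{\cE}_g(2+\ell-m,\ell,m).
\end{equation*}

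Next I would define $\boldsymbol{\kappa}_g^*$ as the global class in $\coh^1(\Q,\V_{|\cH})$ attached to the \emph{same} diagonal cycle construction but with the $\bfg$-component replaced by a $\Lambda$-adic family of non-ordinarily-$p$-stabilised eigenforms (equivalently, inserting the Hecke operator $T_p$ in place of the ordinary projector along the $\bfg$-direction), restricted to $\cH$. The classical relation between a newform $g_\ell$ and its ordinary $p$-stabilisation $g_{\ell,\alpha}$, combined with the explicit shape of the ordinary projector, would then yield the identity \eqref{eq: kappa and kappa improved} at each classical point $(2+\ell-m,\ell,m)$ with $\ell,m\geq 1$. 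Since both sides of that identity are $\Lambda$-adic objects on $\cH$ and the classical points form a Zariski-dense subset, the identity propagates to all of $\cH$.

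The main obstacle is verifying that this modified construction actually descends to a \emph{global} cohomology class in $\coh^1(\Q,\V_{|\cH})$, rather than just to a formal interpolation of its values at classical points. This requires a careful analysis of the $\Lambda$-adic \'etale cohomology of the Kuga--Sato tower involved, together with a proof that the degeneracy maps used to replace the ordinary $p$-stabilisation of $\bfg$ by its non-stabilised counterpart are $\Lambda$-adically interpolable over $\cH$; in effect, one needs the vanishing of the Euler factor along a codimension-one subvariety of $\cW_{\mathbf{fgh}}$ to be absorbed globally. All of this machinery is developed in \cite[\S8.3]{BSV19reciprocity}, so at the level of this paper it suffices to verify that the hypotheses of loc.\,cit.\,apply in our setting; concretely, \eqref{ah-bh} together with \eqref{alfa e betas} produces the exceptional vanishing $\boldsymbol{\cE}_g(2,1,1)=0$, placing us squarely within the framework of their construction.
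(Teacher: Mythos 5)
Your proposal is correct and takes essentially the same route as the paper: the paper's proof of this proposition is just the citation to \cite[\S8.3]{BSV19reciprocity}, and your argument likewise defers to that reference for the actual construction of $\boldsymbol{\kappa}_g^*$ and the verification that it descends to a global $\Lambda$-adic class. The extra exposition you add about modified diagonal cycles and $p$-stabilisations is a reasonable gloss on what BSV do, but it plays no logical role beyond the citation; the one substantive check you make — that \eqref{ah-bh} and \eqref{alfa e betas} give $\boldsymbol{\cE}_g(2,1,1)=0$ and so place us in the exceptional setting of loc.\ cit. — matches the discussion immediately preceding the proposition in the paper.
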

	\begin{proof}
		This is shown in \cite[\S8.3]{BSV19reciprocity}
	\end{proof}

Let $\cC \subset \cH$ denote the curve given on which the set of points $(\ell+1,\ell,1)$ for $\ell \in \Z_{\geq 1}$ is dense. 
%and let $\cO_\cC$ denote the fraction field of the ring of Iwasawa functions on $\cC$. 

	\begin{proposition}\label{prop: Lpf* and kappa*g}%\footnote{F a V: esta no la citamos, pero si citamos la ecuacion de su statement}
	There exists an analytic function $\L_p^{f,*}$ on $\cC$ satisfying, for each $\ell \in \Z_{\geq 1}$:
		\begin{equation*}
		\L_p^{f,*}(\ell)=\dfrac{\langle w_N(f_{\ell+1})^{(p)}, h\bfg_\ell\rangle}{\langle w_N(f_{\ell+1})^{(p)},w_N(f_{\ell+1})^{(p)}\rangle}
		\end{equation*}
		and 
		\begin{equation*}\label{Lpf*} 
		\L_p^f(\ubff,\ubfg,\ubfh)(\ell+1,\ell,1)=\left(1-\dfrac{a_p(\bfg_\ell)\alpha_h}{a_p(\bff_{\ell+1})}\right)\boldsymbol{\cE}_g(\ell+1,\ell,1)\L_p^{f,*}(\ell) \mod L^\times.
		\end{equation*}
		Moreover, 
		\begin{equation}\label{eq: (170) of BSV} 
		\L_p^{f,*}(1)=\dfrac{1}{2(1-1/p)} \exp^*_{\beta\beta}(\pi_{\beta\beta}\partial_p\boldsymbol{\kappa}_g^*(2,1,1))=\dfrac{1}{2(1-1/p)} \exp^*_{\beta\beta}(\partial_p\boldsymbol{\kappa}_g^*(2,1,1)).
		\end{equation}
	\end{proposition}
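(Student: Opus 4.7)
The plan is to combine the reciprocity law \eqref{eq: thmA bsv} with the factorization of Proposition \ref{prop: definition of improved class}, and then to invoke the explicit evaluation of the Perrin--Riou $\Lambda$-adic logarithm $\L_f$ at the exceptional point $(2,1,1)$ carried out in \cite[\S 8]{BSV19reciprocity}. First I would restrict \eqref{eq: thmA bsv} to the plane $\cH$ and use $\boldsymbol{\kappa}_{|\cH}=\boldsymbol{\cE}_g\cdot \boldsymbol{\kappa}_g^*$ to obtain the identity
$$\L_p^f(\ubff,\ubfg,\ubfh)_{|\cH}=\boldsymbol{\cE}_g\cdot \L_f(\res_p(\boldsymbol{\kappa}_g^*)) \in \mathcal{F}_{\cH}.$$
On the other hand, the classical interpolation of $\L_p^f(\ubff,\ubfg,\ubfh)$ in the unbalanced region $k\geq\ell+m$ specializes at the boundary points $(\ell+1,\ell,1)$ on $\cC$ to an explicit product of Euler factors times the Petersson ratio $\langle w_N(f_{\ell+1})^{(p)},h\bfg_\ell\rangle/\langle w_N(f_{\ell+1})^{(p)},w_N(f_{\ell+1})^{(p)}\rangle$. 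A direct book-keeping of the Euler factors shows that the two that depend on $\ell$ are precisely $(1-a_p(\bfg_\ell)\alpha_h/a_p(\bff_{\ell+1}))$ and $\boldsymbol{\cE}_g(\ell+1,\ell,1)$, modulo scalars in $L^\times$. Taking the Petersson ratio as the definition of $\L_p^{f,*}(\ell)$ proves the first two displays of the proposition and shows analyticity along $\cC$, since $\L_p^f(\ubff,\ubfg,\ubfh)$ is Iwasawa on the ambient weight space and $\boldsymbol{\cE}_g$ is meromorphic on $\cH$.

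Next, cancelling the common factor $\boldsymbol{\cE}_g(\ell+1,\ell,1)$ in the two presentations of $\L_p^f(\ubff,\ubfg,\ubfh)_{|\cC}$ yields
$$\bigl(1-a_p(\bfg_\ell)\alpha_h/a_p(\bff_{\ell+1})\bigr)\cdot \L_p^{f,*}(\ell)\;=\;\L_f\bigl(\res_p(\boldsymbol{\kappa}_g^*)(\ell+1,\ell,1)\bigr)\pmod{L^\times}.$$
At $\ell=1$ the Euler factor evaluates to $1-\alpha_g\alpha_h/a = 1-(-a)/a = 2$ by \eqref{ah-bh}, hence $\L_p^{f,*}(1)=\tfrac{1}{2}\L_f(\res_p(\boldsymbol{\kappa}_g^*)(2,1,1))$. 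To close the argument I would then quote \cite[\S 8.3, Eq.\,(170)]{BSV19reciprocity}, which identifies $\L_f$ evaluated at the boundary point $(2,1,1)$ on any class in $\coh^1(\Q_p,\fil^2(\V_{|\cH}))$ with $(1-1/p)^{-1}\exp^*_{\beta\beta}\circ\pi_{\beta\beta}$ applied to its singular quotient. Substituting this description into the previous identity produces \eqref{eq: (170) of BSV}. The second equality of \eqref{eq: (170) of BSV}, stripping off $\pi_{\beta\beta}$ inside $\exp^*_{\beta\beta}$, is automatic from Lemma \ref{lem: tipo lem 4.1 of DR3}: the map $\exp^*_{\beta\beta}$ is only non-zero on the $\V^{\beta\beta}$-component of the decomposition \eqref{eq: decomposition in eigenspaces}, so the other three projections contribute nothing.

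The hard part will be the third step: unwinding BSV's construction of $\L_f$ and verifying precisely the exceptional-zero correction $(1-1/p)^{-1}$ that appears when the Perrin--Riou machinery is evaluated at the boundary $k=\ell+m$ of the unbalanced region. This reduces to a computation in $p$-adic Hodge theory comparing the Bloch--Kato dual exponential with the Coleman--Perrin--Riou $\Lambda$-adic logarithm at a classical point where the standard interpolation formula for $\L_p^f$ acquires a trivial zero. Fortunately this is precisely what is done in detail in \cite[\S 8]{BSV19reciprocity}; the role of the present proof is therefore essentially to transport their formula to our specific triple $(f,g_\alpha,h_\alpha)$ and to combine it with the bookkeeping of Euler factors from Step 1.
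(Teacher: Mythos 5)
Your proposal is correct and follows essentially the same route as the paper, which simply cites \cite[Lemma~8.6 and equation~(170)]{BSV19reciprocity} and observes that the undetermined factor $\lambda_{w_0}$ there equals $1/2(1-1/p)$; your reconstruction correctly decomposes this constant into the Euler factor $1-\alpha_g\alpha_h/a=2$ at $\ell=1$ and the exceptional-zero correction $(1-1/p)^{-1}$, deferring the technical heart (BSV's Theorem~A, Proposition~8.2(3), and Lemma~8.6) to \cite[\S 8]{BSV19reciprocity} just as the paper does. Note only that the division by $\boldsymbol{\cE}_g(\ell+1,\ell,1)$, which vanishes at $\ell=1$, must be understood as an identity of analytic functions on $\cC$ after removing the common zero and passing to $\ell=1$ by continuity, a point you allude to but should state explicitly.
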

	\begin{proof}
	This is proved in \cite[Lemma 8.6 + equation (170)]{BSV19reciprocity}. In particular, combining Theorem A, part 3 of Proposition 8.2 and Lemma 8.6 of loc.\ cit.\ it follows that the factor $\lambda_{w_0}$ in equation (170) is $1/2(1-1/p)$. 
	\end{proof}
	\begin{remark}%\footnote{F a V: este remark no lo citamos, no se si quieres poner esta frase sin poner "remark"}
		By \eqref{alfa e betas} the factor $1-\dfrac{a_p(\bfg_\ell)\alpha_h}{a_p(\bff_{\ell+1})}$  does not vanish in a neighborhood of $(2,1,1)$ in $\cC$.
	\end{remark}

	\subsection{$I_p(f,g_\alpha,h_\alpha)$ in terms of the basis}\label{sec: Lpg in therms of the basis}
	In this section we finally obtain, in Theorem \ref{thm: main1}, the formula for $I_p(f,g_\alpha,h_\alpha)$.
	Recall from the introduction the Galois representation
	$$
	V=V_f\otimes V_g \otimes V_h.
	$$
	Note that  the character $\Xi$ introduced in \S \ref{sec: filtrations} specializes at $(2,1,1)$ to the trivial character, and thus the specialization of the improved $\Lambda$-adic cohomology class of Prop.\,\ref{prop: definition of improved class} yields a global cohomology class%\footnote{Aqui o en el enunciado de la rop \ref{prop: kappa* and the basis} debemos decir que $\boldsymbol{\kappa}^*_g(2,1,1)$ pertenece a $\Sel_{(p)}$. Esto es estandard, esta en mis articulos con Darmon y en BSV seguro.}
	\begin{equation*}
	\boldsymbol{\kappa}^*_g(2,1,1)\in\Sel_{(p)}(V).
	\end{equation*}
	
	%From Proposition \ref{prop: Lpf* and kappa*g}, we get the description of $\boldsymbol{\kappa}_g^*$ in terms of the basis $\{ \xi^{\alpha\alpha},\xi^{\alpha\beta},\xi^{\beta\alpha},\xi^{\beta\beta} \}$ of $\Sel_{(p)}(V)$ (cf. Proposition \ref{prop: kappa* and the basis}). On the other hand, the relation between $\boldsymbol{\kappa}_g^*$ and $I_p(f,g_\alpha,h_\alpha)$ is given in Proposition \ref{prop: Lpg and kappa*} and it follows from by combining Proposition  \ref{prop: definition of improved class}, the equality \eqref{eq: thmA bsv} and some technical results involving a four-variables logarithm.  

	Recall the periods  
	\begin{equation}\label{perio}
	\Omega_{g_\alpha}\in K_p^{1/\alpha_g}, \quad \Theta_{g_\alpha}\in K_p^{1/\beta_g},  \quad \L_{g_\alpha}:=\dfrac{\Omega_{g_\alpha}}{\Theta_{g_\alpha}} \in K_p^{\beta_g/\alpha_g}
	\end{equation}
	 introduced in \S\ref{sec: filtrations}.

	\begin{proposition}\label{prop: kappa* and the basis} We have
		$$\boldsymbol{\kappa}_g^*(2,1,1)=\Theta_{g_\alpha}\Theta_{h_\alpha}\dfrac{2(1-1/p)\sqrt{c}\sqrt{L(E\otimes\rho,1)}}{ \pi^2\langle f, f\rangle} \, \cdot \, \xi^{\beta\beta},$$
		where $c\in L^\times$ is the product of the local terms appearing in \cite[Proposition 2.1 (iii)]{DLR}.
	\end{proposition}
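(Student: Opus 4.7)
The plan is to verify that $\boldsymbol{\kappa}_g^*(2,1,1)$ is a scalar multiple of the basis element $\xi^{\beta\beta}$ in $\Sel_{(p)}(V)$ and then pin down the scalar using \eqref{eq: (170) of BSV} together with the Harris--Kudla/Ichino special value formula.

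I would first argue that $\partial_p\boldsymbol{\kappa}_g^*(2,1,1)$ lies in the $\beta\beta$-eigenspace $\coh^1_s(\Q_p,V^{\beta\beta})$ of the decomposition \eqref{eq: decomposition in eigenspaces}. By construction in \cite[\S 8.3]{BSV19reciprocity}, the improved class $\boldsymbol{\kappa}_g^*$ is the correction of $\boldsymbol{\kappa}$ whose existence is dictated by the vanishing of $\boldsymbol{\cE}_g$ at $(2,1,1)$, and its local image at $p$ factors through the $\V^g = V_\bff^+\otimes V_\bfg^-\otimes V_\bfh^+$ piece \eqref{VfVgVh} of the filtration of $\V$. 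Under the specialization at $(2,1,1)$ and the normalization $\alpha_g\alpha_h=-a$, the weight-one identifications $V_g^+ = V_g^\beta$ and $V_h^+ = V_h^\beta$ force the singular image to land in $\coh^1_s(\Q_p,\,V_f\otimes V_g^\beta\otimes V_h^\beta) = \coh^1_s(\Q_p,V^{\beta\beta})$. This is precisely what makes the second equality of \eqref{eq: (170) of BSV} substantive: $\pi_{\beta\beta}$ acts as the identity on $\partial_p\boldsymbol{\kappa}_g^*(2,1,1)$. Invoking Corollary \ref{cor: basis of Sel(p)} and the injectivity of $\partial_p$ (Lemma \ref{lem: relaxed selmer iso with H1s}), we may thus write $\boldsymbol{\kappa}_g^*(2,1,1)=\lambda\cdot\xi^{\beta\beta}$ for a unique scalar $\lambda\in L_p$.

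Next, I would pin down $\lambda$ by evaluating $\exp^*_{\beta\beta}\circ\partial_p$ on both sides. By \eqref{eq: (170) of BSV} the left-hand side equals $2(1-1/p)\L_p^{f,*}(1)$. Using Proposition \ref{prop: Lpf* and kappa*g} at $\ell=1$, together with the classical Harris--Kudla/Ichino formula for the square of the Petersson product $\langle w_N(f)^{(p)},hg\rangle$ (as recorded in \cite[Proposition 2.1]{DLR}), one obtains, up to scalars in $L^\times$,
$$
\L_p^{f,*}(1) \,\equiv\, \frac{\sqrt{c\cdot L(E\otimes\rho,1)}}{\pi^2\,\langle f,f\rangle},
$$
where $c$ is the product of local automorphic factors (and the Petersson norm of $w_N(f)^{(p)}$ agrees with $\langle f,f\rangle$ up to an Euler factor at $p$ lying in $L^\times$). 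On the other hand, from the description of the dual exponential in Remark \ref{rem: logaritmos} adapted to the $\beta\beta$-component, combined with the definition of $X_{\beta\beta}$ in Corollary \ref{cor: basis of Sel(p)} and the relations $\eta_{g_\alpha}=\Theta_{g_\alpha}\,v_g^\beta$, $\eta_{h_\alpha}=\Theta_{h_\alpha}\,v_h^\beta$ coming from \eqref{eq: definition of Omega and Theta} and \eqref{eq: pairing between omega and eta}, one finds
$$
\exp^*_{\beta\beta}(\partial_p\xi^{\beta\beta}) \,=\, \exp^*(X_{\beta\beta})\cdot\langle v_g^\beta v_h^\beta,\eta_{g_\alpha}\eta_{h_\alpha}\rangle \,\equiv\, \frac{1}{\Theta_{g_\alpha}\,\Theta_{h_\alpha}}\qquad (\textrm{mod }L^\times).
$$
Dividing the two evaluations yields $\lambda = \Theta_{g_\alpha}\Theta_{h_\alpha}\cdot 2(1-1/p)\sqrt{c}\sqrt{L(E\otimes\rho,1)}/(\pi^2\langle f,f\rangle)$, as required.

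The delicate point is the first step: verifying that $\partial_p\boldsymbol{\kappa}_g^*(2,1,1)$ sits entirely in the $\beta\beta$-Frobenius eigenspace. This is really a question about the construction of the improved class in the exceptional setting of \cite[\S 8.3]{BSV19reciprocity} and about the behavior of the filtration $\fil^2(\V)$ along the surface $\cH$. Rather than redoing that analysis, I would extract the needed statement directly from equation (170) of loc.\,cit., where both sides already presuppose the identification of $\partial_p\boldsymbol{\kappa}_g^*(2,1,1)$ as an element of the $V^{\beta\beta}$-summand; the remainder of the argument is then a bookkeeping exercise comparing two explicit evaluations of $\exp^*_{\beta\beta}$.
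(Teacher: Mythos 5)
Your overall strategy agrees with the paper's: compute $\exp^*_{\beta\beta}$ on $\partial_p\boldsymbol{\kappa}_g^*(2,1,1)$ and on $\partial_p\xi^{\beta\beta}$, then take the ratio. The numerator computation via equation (170) and Ichino's formula, and the denominator ratio $1/(\Theta_{g_\alpha}\Theta_{h_\alpha})$, both match the paper. However, there is a genuine gap in your first step, and a secondary slip in the denominator pairing.

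The key claim you must establish is that $\partial_p\boldsymbol{\kappa}_g^*(2,1,1)$ lies entirely in $\coh^1_s(\Q_p,V^{\beta\beta})$ -- equivalently, that its images under $\pi_{\alpha\alpha},\pi_{\alpha\beta},\pi_{\beta\alpha}$ vanish. You argue that the local image ``factors through the $\V^g = V_\bff^+\otimes V_\bfg^-\otimes V_\bfh^+$ piece'' and that the weight-one identifications $V_g^+ = V_g^\beta$, $V_h^+ = V_h^\beta$ force the singular image into $\coh^1_s(\Q_p,V^{\beta\beta})$. This is not right on two counts. First, at $(2,1,1)$ the $\V^g$-piece specializes to $V_f^+\otimes V_g^\alpha\otimes V_h^\beta$ (recall $V_g^- = V_g^\alpha$ in weight one), not $V_f\otimes V_g^\beta\otimes V_h^\beta$; so even granting your premise the conclusion would be wrong. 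Second, and more fundamentally, $\V^g$ sits inside $V_f^+\otimes(\cdots)$, whose image in the singular quotient is zero -- the $\pi_g$-projection is what the Perrin--Riou regulator $\L_g$ sees, not what $\partial_p$ sees. The correct argument, which is what the paper uses, is that the BSV construction gives $\res_p(\boldsymbol{\kappa}_g^*(2,1,1))\in\coh^1(\Q_p,\fil^2(V))$; since $\fil^2(V)/\fil^3(V)$ decomposes as $[V_f^-\otimes V_{gh}^{\beta\beta}]\oplus[V_f^+\otimes V_{gh}^{\alpha\beta}]\oplus[V_f^+\otimes V_{gh}^{\beta\alpha}]$ and the singular-quotient map annihilates anything factoring through $V_f^+$, only the $\V^f$-component (not $\V^g$!) survives, and that component is $V_f^-\otimes V_{gh}^{\beta\beta}$. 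Your fallback of reading the claim off equation (170) does not suffice either: (170) only evaluates $\exp^*_{\beta\beta}$ on the $\pi_{\beta\beta}$-projection, and is consistent with the other three projections being nonzero.

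Separately, in your evaluation of $\exp^*_{\beta\beta}(\partial_p\xi^{\beta\beta})$ you pair $v_g^\beta v_h^\beta$ against $\eta_{g_\alpha}\eta_{h_\alpha}$. Since $\eta_{g_\alpha}=\Theta_{g_\alpha}v_g^\beta$ lives in $\Dderham(V_g^\beta)$, and the pairing of \eqref{pairing weight one} matches the $\alpha$-eigenspace with the $\beta$-eigenspace, pairing $v_g^\beta$ against $\eta_{g_\alpha}$ is degenerate (or zero). The correct pairing, consistent with the pattern of Remark \ref{rem: logaritmos}, is with $\omega_{g_\alpha}\omega_{h_\alpha}\in\Dderham(V_g^\alpha)\otimes\Dderham(V_h^\alpha)$, which gives $\langle v_g^\beta,\omega_{g_\alpha}\rangle=\Theta_{g_\alpha}^{-1}\langle\eta_{g_\alpha},\omega_{g_\alpha}\rangle\equiv\Theta_{g_\alpha}^{-1}\pmod{L^\times}$ by \eqref{eq: pairing between omega and eta}, and similarly for $h$; only then do you obtain $1/(\Theta_{g_\alpha}\Theta_{h_\alpha})$.
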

	\begin{proof}
		According to \cite[\S8.3]{BSV19reciprocity} one has $\res_p(\boldsymbol{\kappa}_g^*(2,1,1))\in\coh^1(\Q_p,\fil^2(V))$. Recall that 
		\begin{equation*}
		\coh^1(\Q_p,\fil^2(V)/\fil^3(V)) = \coh^1(\Q_p, V_f^-\otimes V_{gh}^{\beta\beta})\oplus \coh^1(\Q_p, V_f^+\otimes V_{gh}^{\alpha\beta})\oplus \coh^1(\Q_p, V_f^+\otimes V_{gh}^{\beta\alpha})
		\end{equation*}  
		and hence 
		 $\partial_p\boldsymbol{\kappa}_g^*(2,1,1)\in\coh_s^1(\Q_p,V)=\coh_s^1(\Q_p,V_f^-\otimes V_{gh})$ lies in $\coh^1_s(\Q_p,V^{\beta\beta})$.
		Then, by definition of the basis of $\Sel_{(p)}(V)$ described in Corollary \ref{cor: basis of Sel(p)},
		\begin{equation*}
		\boldsymbol{\kappa}_g^*(2,1,1) = \dfrac{\exp^*_{\beta\beta}(\partial_p\boldsymbol{\kappa}_g^*(2,1,1))}{\exp^*_{\beta\beta}(\partial_p\xi^{\beta\beta})}\cdot \xi^{\beta\beta}.
		\end{equation*}
		We need to compute the numerator and denominator of the previous formula.
		By \eqref{eq: (170) of BSV}, 
		\begin{align*}
		\exp^*_{\beta\beta}(\partial_p\boldsymbol{\kappa}_g^*(2,1,1)) & =2(1-1/p) \L_p^{f*}(1)\\
		& =\dfrac{2(1-1/p)\langle w_N(f), hg\rangle}{\langle w_N(f), w_N(f)\rangle}\\
		& =\dfrac{2(1-1/p)\sqrt{c}\sqrt{L(E\otimes\rho,1)}}{\pi^2\langle f, f\rangle},
		\end{align*}
		where the last equality is given by \cite{Ich08}. Besides,
		\begin{align*}
		\exp_{\beta\beta}^*(\partial_p\xi^{\beta\beta}) & = \langle \exp^*_{\beta\beta}\partial_p\xi^{\beta\beta}, \eta_f\omega_{g_\alpha}\omega_{h_\alpha}\rangle \\
		& =   {\exp^* (X_{{\beta\beta}})}\langle v_g^\beta\otimes v_h^\beta, \omega_{g_\alpha}\otimes\omega_{h_\alpha}\rangle\\
		& = \dfrac{1}{\Theta_{g_\alpha}\Theta_{h_\alpha}} \langle  \eta_{g_\alpha}\otimes\eta_{h_\alpha}, \omega_{g_\alpha}\otimes\omega_{h_\alpha}\rangle\\
		& = \dfrac{1}{\Theta_{g_\alpha}\Theta_{h_\alpha}}.
		\end{align*}
		The proposition follows.
	\end{proof}

	%Following \cite{BSV19reciprocity}, we introduce the following notation.
	Let
	$\Lambda_{\cycl} = \Z_p[[j+1]]$ denote the usual Iwasawa algebra regarded as the ring of bounded analytic functions on an open disc centered at $j=-1$, and let  
	$\boldsymbol{\chi}_{\cycl}:G_{\Q}\longrightarrow \Lambda_{\cycl}^\times$  denote the $\Lambda$-adic cyclotomic character characterized by the property that $\nu_j(\boldsymbol{\chi}_{\cycl})=\chi_{\cycl}^{-j}$ for all $j\in\Z$. 
	
	Recall the three-variable Iwasawa algebra $\Lambda_{\mathbf{fgh}}$ and set $$\bar{\Lambda}_{\mathbf{fgh}} = \Lambda_{\mathbf{fgh}} \hat\otimes \Lambda_{\cycl}, \quad \overline{\cW}_{\mathbf{fgh}}:=  \mathrm{Spf}(\bar{\Lambda}_{\mathbf{fgh}}) = \cW_{\mathbf{fgh}}\times \cW, \quad \overline{\cW}^{\circ}_{\mathbf{fgh}}:=\cW_{\mathbf{fgh}}^{\circ}\times\cW^{\circ}.$$ 
	
%	Define $\overline{\cW}^{\circ}_{\mathbf{fgh}}:=\cW_{\mathbf{fgh}}^{\circ}\times \mathcal{Z}$, where $\mathcal{Z}$ denotes the set of integers $j$ in $\cW$ such that $j\equiv -1 \mod p-1$. 

	Let $\Psi:G_{\Q_p}\longrightarrow\Lambda_{\mathbf{fgh}}^\times$ denote the unramified character taking $\Frob_p$ to $\dfrac{a_p(\bfg)}{a_p(\bff)a_p(\bfh)\chi(p)}$. 
	%, so that $\V^g=\cO_{\mathbf{fgh}}(\Psi)$. 
	%Denote $\Psi_w:=w\circ\Psi$ for each weight $w\in\cW_\bff\times\cW_\bfg\times\cW_\bfh$.
	Define 
	\begin{equation*}
	\mathbb{M}^g:=\Lambda_{\mathbf{fgh}}(\Psi), \qquad \overline{\mathbb{M}}^g:=\mathbb{M}^g\otimes\Lambda_{\cycl}(\boldsymbol{\chi}_{\cycl})
	%, \qquad\cM^g \ \text{ such that } \  \mathbb{M}^g=\cM^g[1/p],
	\end{equation*}
	and note that $\mathbb{M}^g$ is the unramified twist of the local Galois representation $\V^g$ introduced in \eqref{VfVgVh}. 	
		
	Let
	$\theta: \, \bar{\Lambda}_{\mathbf{fgh}}  \, \longrightarrow \, \Lambda_{\mathbf{fgh}}$ denote the homomorphism taking a function $F(k,\ell,m,j)$ to its restriction $F(k,\ell,m,(\ell-k-m)/2)$.  As shown in the proof of \cite[Prop.\,4.6.2]{BSV19reciprocity},
there is an isomorphism
	\begin{equation}\label{eq: theta e eps}
	\theta: \overline{\mathbb{M}}^g\otimes_\theta\cO_{\mathbf{fgh}}\cong\V^g.
	\end{equation}
	Define also the $\Lambda$-adic Dieudonn\'e modules
	\begin{equation*}
	\mathbb{D}^g:=(\mathbb{M}^g\hat{\otimes}\hat{\Z}_p^{\mathrm{ur}})^{G_{\Q_p}}, \quad \overline{\mathbb{D}}^g:=\mathbb{D}^g\otimes\Lambda_{\cycl},
	\end{equation*}
 where $\hat{\Z}_p^{\mathrm{ur}}$ is the ring of integers of the $p$-adic completion $\hat{\Q}_p^{\mathrm{ur}}$ of the maximal unramified extension of $\Q_p$. 	  
	
As it directly follows from the above definitions, the specialization of $\overline{\mathbb{M}}^g$ at a point $\underline{x}=(k,\ell,m,j)\in\overline{\cW}^{\circ}_{\mathbf{fgh}}$ is 
	\begin{equation*}\label{eq: ciao}
	\overline{\mathbb{M}}^g_{\underline{x}} := \overline{\mathbb{M}}^g\otimes_{\underline{x}}L_p = L_p(\Psi_{(k,\ell,m)})(-j)=V_{\bff_k}^+\otimes V_{\bfg_\ell}^-\otimes V_{\bfh_m}^+(2-k-m-j).
	\end{equation*}
%	\color{red}
%	 inducing
%	\begin{equation*}
%	\rho_{\underline{x}}:\coh^1(\Q_p,\overline{\mathbb{M}}^g)\otimes_{\underline{x}}L_p\longrightarrow\coh^1(\Q_p,L_p(\Psi_{(k,\ell,m)})(-j)), \qquad 	\rho_{\underline{x}}:\overline{\mathbb{D}}^g\otimes_{\underline{x}}L_p\longrightarrow\Dcris(L_p(\Psi_{(k,\ell,m)})(-j)).
%	\end{equation*}
%	We will denote $	\boldsymbol{\L}_g(\underline{x}):=\rho_{\underline{x}}\circ	\boldsymbol{\L}_g$ and $\mathbf{Z}(\underline{x}):=\rho_{\underline{x}}(\mathbf{Z})$ for $\mathbf{Z}\in\coh^1(\Q_p,\overline{\mathbb{M}}^g)$.
%
%%\begin{remark}
%	Notice that, for $\underline{x}=(k,\ell,m,j)\in\overline{\cW}_{\mathbf{fgh}}$, since $\Psi_{(k,\ell,m)}$ is always unramified,  
\color{black}
Bloch--Kato's logarithm and dual exponential maps give rise to an isomorphism
	\begin{align}\label{rem: biglog}
	\log_{\underline{x}}: \coh^1(\Q_p,L_p(\Psi_{(k,\ell,m)})(-j))\longrightarrow\Dderham(L_p(\Psi_{(k,\ell,m)})(-j)) \qquad \text{ if $j<0$}\\
	\exp^*_{\underline{x}}: \coh^1(\Q_p,L_p(\Psi_{(k,\ell,m)})(-j))\longrightarrow\Dderham(L_p(\Psi_{(k,\ell,m)})(-j)) \qquad \text{ if $j\geq0$}.
	\end{align}
	In particular, at $\underline{x}=(2,1,1,-1)$ this isomorphism becomes the map
	\begin{equation*}
	\log_{\alpha\beta}:\coh^1_f(\Q_p,V^{\alpha\beta})\longrightarrow L_p,
	\end{equation*}
	after taking the pairing with the differential  $\eta_{g_\alpha}\omega_{h_\alpha}$ (cf.\,Remark \ref{rem: logaritmos}).
%\end{remark}	
	
	\begin{proposition}\label{prop: interpolation for big log}
		There exists a single homomorphism of $\bar{\Lambda}_{\mathbf{fgh}}$-modules
	\begin{equation*}
	\bar{\L}_g:\coh^1(\Q_p,\overline{\mathbb{M}}^g)\longrightarrow \overline{\mathbb{D}}^g,
	\end{equation*}
	such that for all $\mathbf{Z}\in\coh^1(\Q_p,\overline{\mathbb{M}}^g)$ and $\underline{x}=(k,\ell,m,j)=(w,j)\in\overline{\cW}^{\circ}_{\mathbf{fgh}}$ with \begin{equation*}
		1-\Psi_{w}(\Frob_p) p^{-j-1}\neq0,
		\end{equation*} we have
		\begin{equation*}\label{eq: interpolation for big log}
		\bar{\L}_g(\mathbf{Z})(\underline{x})=\left(1-\dfrac{p^j}{\Psi_{w}(\Frob_p)}\right)(1-\Psi_w(\Frob_p)p^{-j-1})^{-1}\cdot\begin{cases} 
		\dfrac{(-1)^{j+1}}{(-j-1)!}\log_{\underline{x}}(\mathbf{Z}(\underline{x})) & j<0;\\
		j! \ \exp^*_{\underline{x}}(\mathbf{Z}(\underline{x})) & j\geq 0.
		\end{cases}
		\end{equation*}
		
	\end{proposition}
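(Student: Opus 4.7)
My plan is to construct $\bar{\L}_g$ as an incarnation of the Perrin-Riou big logarithm/regulator map attached to the unramified family $\mathbb{M}^g$, twisted along the cyclotomic direction. The key structural point is that $\mathbb{M}^g = \Lambda_{\mathbf{fgh}}(\Psi)$ is an unramified deformation (its $G_{\Q_p}$-action is via the unramified character $\Psi$), and $\overline{\mathbb{M}}^g$ is its cyclotomic deformation. This is precisely the setting in which Perrin-Riou's theory of $\Lambda$-adic logarithm maps applies; moreover, the construction is already carried out in the companion reference \cite{BSV19reciprocity}, on whose results the whole section rests.

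Concretely, I would proceed in the following steps. First, I would identify $\coh^1(\Q_p, \overline{\mathbb{M}}^g)$ with the Iwasawa cohomology $\coh^1_{\mathrm{Iw}}(\Q_p(\mu_{p^\infty})/\Q_p, \mathbb{M}^g)$ by Shapiro's lemma, using the standard identification $\Lambda_{\cycl}(\boldsymbol{\chi}_{\cycl}) \simeq \Lambda_{\cycl}$ twisted by the tautological character; this reduces the problem to producing a map from Iwasawa cohomology of $\mathbb{M}^g$ to $\mathbb{D}^g \hat{\otimes} \Lambda_{\cycl}$. Second, I would invoke Perrin-Riou's $\Lambda$-adic logarithm for the crystalline (indeed unramified) representation $\mathbb{M}^g$ in the form presented in \cite[\S4.6]{BSV19reciprocity} (which in turn builds on the Loeffler--Zerbes generalization of Perrin-Riou's construction to families of unramified representations). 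This produces the homomorphism $\bar{\L}_g : \coh^1(\Q_p, \overline{\mathbb{M}}^g) \to \overline{\mathbb{D}}^g$ in a canonical way.

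Third, I would verify the interpolation property at cristalline points $\underline{x} = (w, j) \in \overline{\cW}^{\circ}_{\mathbf{fgh}}$. This amounts to specializing $\bar{\L}_g$ and comparing with Perrin-Riou's classical interpolation formulas: the image lands in $\Dderham(\overline{\mathbb{M}}^g_{\underline{x}})$, and the comparison with $\log_{\underline{x}}$ (when $j < 0$) or $\exp^*_{\underline{x}}$ (when $j \geq 0$) produces the standard Euler factor $(1 - p^j/\Psi_w(\Frob_p))(1 - \Psi_w(\Frob_p) p^{-j-1})^{-1}$ together with the factorial normalization $(-1)^{j+1}/(-j-1)!$ or $j!$; these are precisely the factors appearing in Perrin-Riou's reciprocity law for an unramified eigenvalue $\Psi_w(\Frob_p)$ at weight $-j$.

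The main obstacle, as in any Perrin-Riou-type statement, is not the existence of the map (it is produced by a now-standard construction) but checking that the specialization formula is exactly the stated one, with all Euler and factorial factors in the correct place and the nonvanishing hypothesis $1 - \Psi_w(\Frob_p) p^{-j-1} \neq 0$ correctly identified as the condition ensuring that the specialization is well-defined. Once the identification of $\overline{\mathbb{M}}^g$ as the cyclotomic deformation of an unramified family is in hand, this reduces to citing \cite[Proposition 4.6.2]{BSV19reciprocity}, whose interpolation formula is of the same shape and whose proof I would mirror, adapting the indexing conventions to the present $(k,\ell,m,j)$ parametrization via the map $\theta$ of \eqref{eq: theta e eps}.
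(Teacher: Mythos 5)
Your proposal matches the paper's proof, which is simply a citation to \cite[Proposition 4.6.1]{BSV19reciprocity} (you cite 4.6.2, a minor slip — 4.6.1 is the existence-and-interpolation statement for the big logarithm, while 4.6.2 records its relation to $\L_g$), noting as you do that the construction rests on the Loeffler--Zerbes extension of Perrin-Riou's theory to families of unramified representations \cite{LZ14}. The additional scaffolding you supply — the Shapiro-lemma identification with Iwasawa cohomology and the role of $\overline{\mathbb{M}}^g$ as the cyclotomic deformation of the unramified family $\mathbb{M}^g$ — is a faithful unwinding of what the cited result actually does, so the route is the same.
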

	\begin{proof}
	The proposition as stated here is \cite[Proposition 4.6.1]{BSV19reciprocity} for $\bfg$, and it is a consequence of \cite{LZ14}.
	\end{proof}

	By the proof of \cite[Proposition 4.6.2]{BSV19reciprocity},  the relation between $\bar{\L}_g$ and the logarithm $\L_g$ that appears in \eqref{eq: thmA bsv} is given, for all $\mathbf{Z}\in\coh^1(\Q_p,\overline{\mathbb{M}}^g)$ by
	\begin{equation}\label{eq: big log and bigbig log}
	\L_g(\theta_*(\mathbf{Z}))(k,\ell,m)=\langle\bar{\L}_g(\mathbf{Z})(k,\ell,m,(\ell-k-m)/2),\omega_{\bff_k}\eta_{\bfg_\ell}\omega_{\bfh_m}\rangle.
	\end{equation}
Here, if $\underline{x}=(k,\ell,m,(\ell-k-m)/2)$, then \begin{equation*}
	\omega_{\bff_k}\eta_{\bfg_\ell}\omega_{\bfh_m}\in\overline{\mathbb{D}}^g\otimes_{\underline{x}}L_p\cong\Dderham(V_{\bff_k}^+\otimes V_{\bfg_\ell}^-\otimes V_{\bfh_m}^+((\ell-k-m+4)/{2}))
\end{equation*}
are the differentials defined as in \cite[(228)]{BSV19reciprocity} and are natural generalizations of the ones introduced in \S\ref{sec: filtrations}.

When $j$ is a fixed integer, note that the Euler-like factors appearing above vary analytically. As it will suffice or our purposes, set $j=-1$ and recall the plane $\cH$ parametrized by points of weights $(2+\ell-m,\ell,m)$ in $\cW_{\mathbf{fgh}}$. By a slight abuse of notation  we  also regard $\cH$  as the plane in $\bar{\cW}_{\mathbf{fgh}}$ parametrizing points of weights $(2+\ell-m,\ell,m,-1)$.

Define the homomorphism
	\begin{equation*}
		\bar{\L}^*_g:\coh^1(\Q_p, \overline{\mathbb{M}}^g_{|\cH})\longrightarrow \overline{\mathbb{D}}^g_{|\cH}
		\end{equation*}
of $\cO_{\cH}$-modules given by
\begin{equation}\label{lem: log*1}
\bar{\L}^*_g = {\boldsymbol{\cE}_g} \times 	\bar{\L}_{g|\cH}.
\end{equation}

In light of the above proposition,
%\footnote{Provide more details explaining why this is well-defined at $\ell=1$.} 
for any $\boldsymbol{Z}\in\coh^1(\Q_p, \overline{\mathbb{M}}^g_{|\cH}) $ and $\ell\geq1$ one then has 
\begin{equation}\label{eq: LLg* formula}
		\bar{\L}^*_g(\boldsymbol{Z})(\ell)= (1-\dfrac{a_p(\bfg_\ell)\beta_h}{a_p(\bff_{\ell+1})p}) \cdot \langle \log_{(\ell+1,\ell,1,-1)}(\mathbf{Z}(\ell+1,\ell,1,-1)),\omega_{\bff_{\ell+1}}\eta_{\bfg_\ell}\omega_{h_\alpha}\rangle.
		\end{equation}

%\begin{lemma}\label{lem: log*}
%		\fixme{(F): attenzione, mancano dettagli}
%		There exists a morphism of $\cO_{\cH\cap\cK}$-modules
%		\begin{equation*}
%		\bar{\L}^*_g:\coh^1(\Q_p, \overline{\mathbb{M}}^g_{|\cH\cap\cK})\longrightarrow \overline{\mathbb{D}}^g_{\cH\cap\cK}
%		\end{equation*}
%		such that, for all $\boldsymbol{Z}\in\coh^1(\Q_p, \overline{\mathbb{M}}^g_{|\cH\cap\cK}) $ and $\ell\geq1$,
%		\begin{equation*}
%		\bar{\L}^*_g(\boldsymbol{Z})(\ell)= (1-\dfrac{a_p(\bfg_\ell)\beta_h}{a_p(\bff_{\ell+1})p}) \cdot \log_{(\ell+1,\ell,1,-1)}(\mathbf{Z}(\ell+1,\ell,1,-1)).
%		\end{equation*}
		
%	\end{lemma}
%	\begin{proof}
		
	%	Set
	%	\begin{equation*}
	%	\bar{\L}_g^*:=	{\boldsymbol{\cE}_g}_{|\cH\cap\cK} \times 	\bar{\L}_{g|\cH\cap\cK}.
	%	\end{equation*}
	%	For all $\ell>1$, by \eqref{eq: interpolation for big log} we have
	%	\begin{align*}
	%	\bar{\L}_g^*(\mathbf{Z})(\ell) & = (1-\dfrac{a_p(\bfg_\ell)\beta_h}{a_p(\bff_{\ell+1})p}) \cdot  \log_{(\ell+1,\ell,1,-1)}(\mathbf{Z}(\ell+1,\ell,1,-1)).
	%	\end{align*}
	%	\color{red}
	%	missing: at $\ell=1$, 
	%	\begin{equation*}
	%	\bar{\L}_g^*(\mathbf{Z})(1) =\lim_{\ell\rightarrow1} (1-\dfrac{a_p(\bfg_\ell)\beta_h}{a_p(\bff_{\ell+1})p}) \cdot  \log_g(\mathbf{Z}(\ell+1,\ell,1))\overset{?}{=}\log_g(\mathbf{Z}(2,1,1)). 
	%	\end{equation*}
		
	%\end{proof}

	%Next lemma allows us to extend the previous formula at $\ell=1$.

	\begin{proposition}\label{prop: Lpg and kappa*} We have
		$$
		I_p(f,g_\alpha,h_\alpha)={\left(1-\dfrac{1}{p}\right)}\log_{\alpha\beta}(\pi_{\alpha\beta}\boldsymbol{\kappa}_g^*(2,1,1)) \mod L^\times.
		$$
		
	\end{proposition}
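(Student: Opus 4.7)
The plan is to combine the BSV reciprocity law \eqref{eq: thmA bsv} with the pairing formula \eqref{eq: big log and bigbig log} in order to express $I_p(f,g_\alpha,h_\alpha)$ as a specialization of the four-variable logarithm $\bar{\L}_g$ applied to $\res_p(\boldsymbol{\kappa})$, and then resolve the resulting exceptional zero at $(2,1,1)$ by invoking Proposition \ref{prop: definition of improved class} together with the improved logarithm $\bar{\L}_g^{*}$ of \eqref{lem: log*1}.

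Let $\mathbf{Z}\in\coh^1(\Q_p,\overline{\mathbb{M}}^g)$ be a class such that $\theta_{*}(\mathbf{Z})=\pi_g(\res_p(\boldsymbol{\kappa}))$ via the identification \eqref{eq: theta e eps}. Specializing \eqref{eq: big log and bigbig log} at $(k,\ell,m)=(2,1,1)$, for which $(\ell-k-m)/2=-1$, and using \eqref{eq: thmA bsv}, I first obtain
\begin{equation*}
I_p(f,g_\alpha,h_\alpha)\;=\;\langle\,\bar{\L}_g(\mathbf{Z})(2,1,1,-1),\;\omega_f\,\eta_{g_\alpha}\,\omega_{h_\alpha}\,\rangle\qquad(\mathrm{mod}\;L^\times).
\end{equation*}
The interpolation formula of Proposition \ref{prop: interpolation for big log} cannot be applied directly here: the assumption $\alpha_g\alpha_h=-a$ combined with the weight-one relation $\alpha_h\beta_h=\chi^{-1}(p)$ forces $\Psi_{(2,1,1)}(\Frob_p)=1$, so the Euler factor $(1-\Psi_w(\Frob_p)p^{-j-1})^{-1}$ has a pole, while simultaneously $\boldsymbol{\cE}_g(2,1,1)=0$ forces the class $\mathbf{Z}(2,1,1,-1)$ to vanish by Proposition \ref{prop: definition of improved class}.

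To remove this $0\cdot\infty$ indeterminacy I will pass to the plane $\cH$ and use the factorizations $\mathbf{Z}_{|\cH}=\boldsymbol{\cE}_g\cdot\mathbf{Z}^{*}$ (coming from $\boldsymbol{\kappa}_{|\cH}=\boldsymbol{\cE}_g\cdot\boldsymbol{\kappa}_g^{*}$) and $\bar{\L}_g^{*}=\boldsymbol{\cE}_g\cdot\bar{\L}_{g|\cH}$. Combining $\Lambda$-adic linearity of $\bar{\L}_{g|\cH}$ with these two relations yields the key identity
\begin{equation*}
\bar{\L}_g(\mathbf{Z})(2,1,1,-1)\;=\;\bar{\L}_{g|\cH}(\boldsymbol{\cE}_g\,\mathbf{Z}^{*})(2,1,1,-1)\;=\;\bar{\L}_g^{*}(\mathbf{Z}^{*})(2,1,1,-1),
\end{equation*}
in which the two copies of $\boldsymbol{\cE}_g$ cancel. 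Applying now \eqref{eq: LLg* formula} at $\ell=1$, the Euler factor becomes $1-\alpha_g\beta_h/(ap)$; from \eqref{ah-bh} and $\beta_h=-\alpha_h$ one has $\alpha_g\beta_h=-\alpha_g\alpha_h=a$, hence this factor equals $1-1/p$.

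For the remaining logarithmic term I will identify the pairing appearing in \eqref{eq: LLg* formula} with $\log_{\alpha\beta}(\pi_{\alpha\beta}\boldsymbol{\kappa}_g^{*}(2,1,1))$. At $(2,1,1,-1)$ the module $\overline{\mathbb{M}}^g$ specializes to $V_f^+\otimes V_g^\alpha\otimes V_h^\beta$, and under $\iota_{*}$ of \eqref{prop: H1f Kp, E} the associated $\coh^1_f$ coincides with $\coh^1_f(\Q_p,V^{\alpha\beta})$ via Lemma \ref{lem: H1(Q,V) and H(K,Vf)}; the pairing with $\omega_f\,\eta_{g_\alpha}\,\omega_{h_\alpha}$ then reproduces, by Remark \ref{rem: logaritmos}, the map $\log_{\alpha\beta}\circ\pi_{\alpha\beta}$ applied to $\boldsymbol{\kappa}_g^{*}(2,1,1)$. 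Assembling everything yields the claimed formula.

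The principal obstacle I expect is the bookkeeping required to identify $\mathbf{Z}^{*}(2,1,1,-1)$ with $\pi_{\alpha\beta}\boldsymbol{\kappa}_g^{*}(2,1,1)$: one must match the characterization of $\V^g$ at the weight-one specializations of $\bfg$ and $\bfh$ (where $V_\bfg^-$ becomes $V_g^\alpha$ and $V_\bfh^+$ becomes $V_h^\beta$) with the eigenspace decomposition used to define $\pi_{\alpha\beta}$, and verify that the Ohta differentials built into $\bar{\L}_g^{*}$ agree up to $L^\times$ with those used in Remark \ref{rem: logaritmos} to normalize $\log_{\alpha\beta}$.
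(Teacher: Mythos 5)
Your proof is correct and follows essentially the same route as the paper's: restrict to the plane $\cH$, trade $\boldsymbol{\kappa}_{|\cH}$ for $\boldsymbol{\cE}_g\cdot\boldsymbol{\kappa}_g^*$ via Proposition \ref{prop: definition of improved class}, absorb the resulting $\boldsymbol{\cE}_g$ into $\bar{\L}_{g|\cH}$ to produce the improved logarithm $\bar{\L}^*_g$, evaluate the Euler-like factor in \eqref{eq: LLg* formula} at $\ell=1$ using $\alpha_g\beta_h=a$ to get $1-1/p$, and identify the remaining pairing with $\log_{\alpha\beta}\circ\pi_{\alpha\beta}$ via Remark \ref{rem: logaritmos}. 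The only cosmetic difference is that you factor the local class ($\mathbf{Z}_{|\cH}=\boldsymbol{\cE}_g\cdot\mathbf{Z}^*$) before applying the logarithm, whereas the paper applies $\L_{g|\cH}$ to $\res_p(\boldsymbol{\kappa}_{|\cH})$ and factors out $\boldsymbol{\cE}_g$ afterwards — the same manipulation, one step earlier.
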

\begin{proof}	
%	 \color{red}Recall the line $ \cH\cap\cK$ containing the points $(\ell+1,\ell,1)$ for $\ell \geq 1$ as a dense subset. To avoid notational clutter, we continue to denote $r$ the line in $\overline{\cW}_{\mathbf{fgh}}$ containing the points $(\ell+1,\ell,1,-1)$ as a dense subset.
%		\color{black}
		To avoid notational clutter, we continue to denote $\cH$ the plane in $\overline{\cW}_{\mathbf{fgh}}$ containing the points $(2+\ell-m,\ell,m,-1)$ as a dense subset.
		Let   $\widetilde{\boldsymbol{\kappa}}^*_{g,p}\in\coh^1(\Q_p,\overline{\mathbb{M}}^g_{|\cH})$ be any lift of $\res_p(\boldsymbol{\kappa}_g^*)$, whose existence is assured by the fact that the map
	\begin{equation*}
		\theta_*:\coh^1(\Q_p,\overline{\mathbb{M}}^g)\otimes_\theta\cO_{\mathbf{fgh}}\overset{}{\longrightarrow}\coh^1(\Q_p,\V^g)
	\end{equation*}
	induced by \eqref{eq: theta e eps} is an isomorphism (cf.\ the proof of \cite[Proposition  4.6.2]{BSV19reciprocity}).

	Combining Proposition \ref{prop: interpolation for big log}  with \eqref{eq: thmA bsv}, \eqref{eq: kappa and kappa improved}, \eqref{eq: big log and bigbig log}  we deduce
	\begin{align*}
	\L_p^g(\ubff,\ubfg,\ubfh)_{|\cH} & = \L_{g|\cH}(\res_p(\boldsymbol{\kappa}_{|\cH}))  = \boldsymbol{\cE}_{g|\cH}\cdot\L_{g|\cH}(\res_p(\boldsymbol{\kappa}^*_{g}))\\
	& = \boldsymbol{\cE}_{g|\cH}\cdot\langle\bar{\L}_{g|\cH}(\widetilde{\boldsymbol{\kappa}}^*_{g,p}), \omega_{\bff_{\ell+1}}\eta_{\bfg_\ell}\omega_{h_\alpha}\rangle.
	\end{align*}

	%Since $1-\Psi_{(\ell+1,\ell,1)}(\Frob_p)=\boldsymbol{\cE}_g(\ell+1,\ell,1)=1-\dfrac{a_p(\bfg_\ell)}{\chi(p)a_p(\bff_{\ell+1})\alpha_h}$, it follows that for all $\ell>1$:
	%\begin{align}\label{eq: equazione}
	%\L_p^g(\bff,\bfg,\bfh)(\ell+1,\ell,1) 
	%& =\left(1-\dfrac{1}{\Psi_{(\ell+1,\ell,1)}(\Frob_p)}\right)\log(\res_p \, {\boldsymbol{\kappa}}_g^*(\ell+1,\ell,1)).
	%\end{align}
  By \eqref{lem: log*1}, \eqref{eq: LLg* formula}  and  \eqref{rem: biglog}, the latter quantity is equal to 
  $$
\langle   \bar{\L}^*_g(\widetilde{\boldsymbol{\kappa}}^*_{g,p}), \omega_{\bff_{\ell+1}}\eta_{\bfg_\ell}\omega_{h_\alpha}\rangle = (1-\dfrac{a_p(\bfg_\ell)\beta_h}{a_p(\bff_{\ell+1})p}) \cdot \log_{\alpha\beta}(\pi_{\alpha\beta}\boldsymbol{\kappa}_g^*(\ell+1,\ell,1)).
  $$ 
Since $I_p(f,g_\alpha,h_\alpha) = \L_p^g(\ubff,\ubfg,\ubfh)(2,1,1)$ by definition, the proposition follows by using \eqref{alfa e betas}.
	\end{proof}
	
Define the local point $$P_{{\alpha\beta}}\in E(K_p)^{{\alpha\beta}}_{L_p}$$ as the one satisfying  
	\begin{equation}\label{defPab}
	\pi_{\alpha\beta}\res_p(\xi^{\beta\beta})=\delta_p(P_{{\alpha\beta}})\otimes v_g^\alpha\otimes v_h^\beta\in\left(\coh^1_f(K_p,V_f)\otimes V_{gh}^{\alpha\beta}\right)^{G_{\Q_p}},
	\end{equation}
	where $\delta_p:E(K_p)\longrightarrow\coh^1_f(K_p,V_f)$ is the Kummer map and $E(K_p)^{{\alpha\beta}}_{L_p}:=E(K_p)^{{\alpha\beta}}\otimes{L_p}$.
	%Note that $\left(\coh^1_f(K_p,V_f)\otimes V_{gh}^{\alpha\beta}\right)^{G_{\Q_p}}\cong\coh^1_f(\Q_p,V^{\alpha\beta})$ by Lemma \ref{lem: H1(Q,V) and H(K,Vf)}. 

\begin{theorem}\label{thm: main1}  
We have
	\begin{equation*}  
	I_p(f,g_\alpha, h_\alpha) =\dfrac{2(1-1/p)^2\sqrt{c}}{\pi^2\langle f, f\rangle}\times {\sqrt{L(E\otimes\rho,1)}} \times\dfrac{\log_p(P_{{\alpha\beta}})}{\L_{g_\alpha}} \ \mod L^\times.
	\end{equation*}
\end{theorem}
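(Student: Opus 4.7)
The strategy is to chain together the three key ingredients established in the preceding subsections: the identity of Proposition \ref{prop: Lpg and kappa*} which expresses $I_p(f,g_\alpha,h_\alpha)$ in terms of $\log_{\alpha\beta}$ applied to $\pi_{\alpha\beta}\boldsymbol{\kappa}_g^*(2,1,1)$; Proposition \ref{prop: kappa* and the basis} which decomposes $\boldsymbol{\kappa}_g^*(2,1,1)$ along the basis of $\Sel_{(p)}(V)$ with explicit coefficient; and the definition \eqref{defPab} of the local point $P_{\alpha\beta}$ together with the explicit description of $\log_{\alpha\beta}$ given in Remark \ref{rem: logaritmos}. All that remains is the bookkeeping of the periods $\Omega_{g_\alpha}$, $\Theta_{g_\alpha}$, $\Theta_{h_\alpha}$ to recognize $\L_{g_\alpha}$.

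The first step is to substitute Proposition \ref{prop: kappa* and the basis} into Proposition \ref{prop: Lpg and kappa*} and apply the local projection $\pi_{\alpha\beta}\circ\res_p$ to the identity $\boldsymbol{\kappa}_g^*(2,1,1) = \lambda \cdot \xi^{\beta\beta}$, where
\[
\lambda = \Theta_{g_\alpha}\Theta_{h_\alpha}\cdot\frac{2(1-1/p)\sqrt{c}\sqrt{L(E\otimes\rho,1)}}{\pi^2\langle f, f\rangle}.
\]
By \eqref{defPab}, $\pi_{\alpha\beta}\res_p(\xi^{\beta\beta}) = \delta_p(P_{\alpha\beta})\otimes v_g^\alpha\otimes v_h^\beta$, so
\[
\pi_{\alpha\beta}\res_p\boldsymbol{\kappa}_g^*(2,1,1) = \lambda\cdot\delta_p(P_{\alpha\beta})\otimes v_g^\alpha\otimes v_h^\beta.
\]

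The second step is to apply $\log_{\alpha\beta}$ and invoke Remark \ref{rem: logaritmos}, which tells us that under the identification of Lemma \ref{lem: H1(Q,V) and H(K,Vf)}, the map $\log_{\alpha\beta}$ acts as $x\otimes v_g^\alpha v_h^\beta \mapsto \log_E(x)\cdot \langle v_g^\alpha v_h^\beta, \eta_{g_\alpha}\omega_{h_\alpha}\rangle$ and that the pairing evaluates to $1/(\Omega_{g_\alpha}\Theta_{h_\alpha})$ in $K_p^{\alpha\beta}\otimes L_p$. Since $\log_E$ coincides with $\log_p$ on $E(K_p)$ up to the already-fixed choice of differential, we obtain
\[
\log_{\alpha\beta}(\pi_{\alpha\beta}\boldsymbol{\kappa}_g^*(2,1,1)) = \frac{\Theta_{g_\alpha}\Theta_{h_\alpha}}{\Omega_{g_\alpha}\Theta_{h_\alpha}}\cdot \frac{2(1-1/p)\sqrt{c}\sqrt{L(E\otimes\rho,1)}}{\pi^2\langle f, f\rangle}\cdot \log_p(P_{\alpha\beta}).
\]

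The third and final step is to simplify the period ratio: by \eqref{Lga} one has $\Theta_{g_\alpha}/\Omega_{g_\alpha} = 1/\L_{g_\alpha}$, so the extra factor $\Theta_{h_\alpha}$ cancels and we get precisely $1/\L_{g_\alpha}$. Multiplying by the prefactor $(1-1/p)$ provided by Proposition \ref{prop: Lpg and kappa*} yields
\[
I_p(f,g_\alpha,h_\alpha) = \frac{2(1-1/p)^2\sqrt{c}\sqrt{L(E\otimes\rho,1)}}{\pi^2\langle f, f\rangle}\cdot\frac{\log_p(P_{\alpha\beta})}{\L_{g_\alpha}}\pmod{L^\times},
\]
as claimed. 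No step poses a serious obstacle; the only point requiring attention is tracking that every choice made (test vectors, basis vectors $v_g^\alpha, v_h^\beta$, etc.) is well defined modulo $L^\times$, so that the final equality is meaningful in $\bar{\Q}_p^\times/L^\times$.
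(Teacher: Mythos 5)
Your proof is correct and follows essentially the same route as the paper's own proof: you substitute Proposition \ref{prop: kappa* and the basis} into Proposition \ref{prop: Lpg and kappa*}, use \eqref{defPab} to identify $\pi_{\alpha\beta}\res_p(\xi^{\beta\beta})$ with $\delta_p(P_{\alpha\beta})\otimes v_g^\alpha\otimes v_h^\beta$, apply the explicit description of $\log_{\alpha\beta}$ from Remark \ref{rem: logaritmos}, and simplify $\Theta_{g_\alpha}/\Omega_{g_\alpha}$ to $1/\L_{g_\alpha}$ via \eqref{Lga}. The bookkeeping of periods and the final reduction are exactly as in the paper.
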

\begin{proof}
	
	By Proposition \ref{prop: kappa* and the basis}, 
	$\boldsymbol{\kappa}_g^*(2,1,1)=\dfrac{\Theta_{g_\alpha}\Theta_{h_\alpha}2(1-1/p)\sqrt{c}\sqrt{L(E\otimes\rho,1)}}{\pi^2\langle f, f\rangle}\xi^{\beta\beta}$. It thus follows from Proposition \ref{prop: Lpg and kappa*} that
	\begin{align*}
	I_p(f,g_\alpha,h_\alpha) &= {\left(1-\dfrac{1}{p}\right)}  \log_{\alpha\beta}(\pi_{\alpha\beta}\boldsymbol{\kappa}_g^*(2,1,1))\\
	& = \dfrac{\Theta_{g_\alpha}\Theta_{h_\alpha}\sqrt{c}2(1-1/p)^2\sqrt{L(E\otimes\rho,1)}}{\pi^2\langle f, f\rangle} \log_{\alpha\beta}(\pi_{\alpha\beta}\xi^{\beta\beta})\\
	& = \dfrac{\Theta_{g_\alpha}\Theta_{h_\alpha}\sqrt{c}2(1-1/p)^2\sqrt{L(E\otimes\rho,1)}}{\pi^2\langle f, f\rangle} \log_p(P_{{\alpha\beta}})\langle v_g^\alpha\otimes v_h^\beta, \eta_{g_\alpha}\omega_{h_\alpha}\rangle\\
	& = \dfrac{\Theta_{g_\alpha}\Theta_{h_\alpha}\sqrt{c}2(1-1/p)^2\sqrt{L(E\otimes\rho,1)}}{\Omega_{g_\alpha}\Theta_{h_\alpha}\pi^2\langle f, f\rangle} \log_p(P_{{\alpha\beta}}).\\
	\end{align*}
	The theorem follows in light of \eqref{perio}.	
\end{proof}

\section{A special value formula for $I_p(f,g_\alpha,h_\alpha)$ in terms of Kolyvagin classes}\label{sec: main thm 2}

The aim of this section is to relate Theorem \ref{thm: main1} to Bertolini-Darmon's Kolyvagin classes constructed in \cite[\S6]{BD97}. 

\subsection{Decomposition of the representation $V$ and consequences}\label{sec: Decomposition of the representation $V$ and consequences}

The underlying reason why  the $p$-adic $L$-value $\L_p^g(\ubff,\ubfg,\ubfh)(2,1,1)$ may be related to Kolyvagin classes relies on the decomposition of the Galois representation $V_g\otimes V_h$ as the direct sum of a pair of $2$-dimensional representations induced from characters of an imaginary quadratic field. 
%In this section we describe more precisely this decomposition and we study its consequences in the study of the triple product $p$-adic $L$-function.

More precisely, recall that $g$ and $h$ are theta series of two finite order Hecke characters $\psi_g,\psi_h:G_K\longrightarrow L^\times$. As explained in the introduction, there is a decomposition \begin{equation}\label{eq: decomposition psis}
	V_g\otimes V_h\cong V_{\psi_1}\oplus V_{\psi_2}
\end{equation} where $\psi_1:=\psi_g\psi_h, \ \psi_2:=\psi_g\psi_h'$ and $V_{\psi_i}:=\Ind_{G_K}^{G_\Q}(\psi_i)$.

For $i=1,2$, $\psi_i$ is a ring class character, i.e.\ it factors through the absolute Galois group of a number field $H_i$ which is a ring class field of $K$ of some conductor $c_i$. Recall we have assumed that $p$ does not divide the conductor of $\psi_g$ and $\psi_h$, hence that the characters $\psi_1$ and $\psi_2$ are unramified at $p$, and this implies that $p\nmid c_1c_2$.  The Artin representation  $$\rho=\rho_{g}\otimes\rho_h:G_\Q\rightarrow\GL(V_g\otimes_{L_p}V_h)$$ thus factors through $\Gal(H/K)$, where $H$ is the ring class field of $K$ of conductor $c:=\mathrm{lcm}(c_1,c_2)$.
Moreover, since the prime $p$ is inert in $K$ and $p\nmid c$, the principal ideal $p\cO_K$ 
splits completely in $H$.
Fix once and for all a prime $\fp$ of the field $H$ dividing $p$ and denote $H_\fp$ the completion of $H$ at $\fp$. Then
$H_\fp=K_p$ and, in particular, the restriction of $\psi_i$ to the decomposition group $G_{K_p}=G_{H_\fp}$ is trivial for $i=1,2$.

Recall that $\Frob_p$ acts on $V_g$ with eigenvalues $\alpha_g=\lambda, \ \beta_g=-\lambda$ and on $V_h$ with eigenvalues \color{black} 
\begin{equation*}
	(\alpha_h,\beta_h)=\begin{cases}
	(-1/\lambda,1/\lambda) & \text{ if } a=+1;\\
	(1/\lambda,-1/\lambda) & \text{ if } a=-1,
	\end{cases}
\end{equation*}
so we always have $\alpha_g\alpha_h=-a$.
As in the previous sections, choose an $L$-basis of eigenvectors $v_g^\alpha, v_g^\beta$ of $V_g$ and likewise a basis $v_h^\alpha, v_h^\beta$ of $V_h$. 

Note also that $\Frob_p$ acts on $V_{\psi_1}$ and $V_{\psi_2}$ with eigenvalues $\pm1$. 

\begin{lemma}\label{lem: decomposition of Vgh and eigenvectors}
There is a $G_\Q$-equivariant isomorphism $$\Psi:V_g\otimes V_h\cong V_{\psi_1}\oplus V_{\psi_2}.$$ Moreover, $V_{\psi_i}$ admits an $L$-basis $\{ v_i^+,v_i^- \}$ of eigenvectors with relative eigenvalues $\{ +1,-1 \}$ satisfying:
	\begin{itemize}
		\item if $a=+1$, then %$(\alpha_h,\beta_h)=(-1/\lambda,1/\lambda)$:
		\begin{align*}
		\Psi(v_g^\alpha\otimes v_h^\alpha) = v_1^--v_2^-, \ \ \ \Psi(v_g^\beta\otimes v_h^\beta) = v_1^-+v_2^-, \ \ \ \Psi(v_g^\alpha\otimes v_h^\beta) = v_1^++v_2^+, \ \ \ \Psi(v_g^\beta\otimes v_h^\alpha) = v_1^+-v_2^+.
		\end{align*}
			\item  If $a=-1$, then %$(\alpha_h,\beta_h)=(1/\lambda,-1/\lambda)$:
		\begin{align*}
		\Psi(v_g^\alpha\otimes v_h^\alpha) = v_1^++v_2^+, \ \ \ \Psi(v_g^\beta\otimes v_h^\beta) = v_1^+-v_2^+, \ \ \ \Psi(v_g^\alpha\otimes v_h^\beta) =  v_1^--v_2^-, \ \ \ \Psi(v_g^\beta\otimes v_h^\alpha) = v_1^-+v_2^-.
		\end{align*}
	\end{itemize}

\end{lemma}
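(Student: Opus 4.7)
My plan is to derive the isomorphism $\Psi$ from Mackey's formula for tensor products of induced representations and then read off the eigenvector formulas by a direct matrix computation after a suitable normalization of bases. Since $V_g = \mathrm{Ind}_{G_K}^{G_\Q}(L_{\psi_g})$ and $V_h = \mathrm{Ind}_{G_K}^{G_\Q}(L_{\psi_h})$, the projection formula combined with $\mathrm{Res}_{G_K}(V_h) = L_{\psi_h} \oplus L_{\psi_h^\sigma}$, where $\sigma$ is the non-trivial element of $\mathrm{Gal}(K/\Q)$, yields
$$V_g \otimes V_h \; \cong \; \mathrm{Ind}_{G_K}^{G_\Q}(\psi_g\psi_h) \oplus \mathrm{Ind}_{G_K}^{G_\Q}(\psi_g\psi_h^\sigma) \; = \; V_{\psi_1} \oplus V_{\psi_2}.$$

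Next, I would fix bases compatible with $F := \mathrm{Frob}_p$. Let $e_g$ (resp.\ $e_h$) generate the $\psi_g$-isotypic (resp.\ $\psi_h$-isotypic) line in $V_g$ (resp.\ $V_h$) for the $G_K$-action, and set $e_g' := F e_g$, $e_h' := F e_h$. Because $F^2\in G_K$, one has $F^2 e_g = \lambda^2 e_g$ (so $\alpha_g=\lambda$, $\beta_g=-\lambda$) and $F^2 e_h = \alpha_h^2 e_h$. A one-line computation yields $F$-eigenvectors inside $V_g$ and $V_h$, for instance $v_g^\alpha = \lambda e_g + e_g'$, $v_g^\beta = \lambda e_g - e_g'$, together with analogous formulas for $v_h^\alpha, v_h^\beta$ depending on the sign of $a$. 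On the target side, $\psi_i$ factors through $\mathrm{Gal}(H/K)$ with $H$ the ring class field of conductor $c$, and since $p\nmid c$ and $p$ is inert in $K$, the ideal $p\cO_K$ is principal in the order $\cO_c$ and so splits completely in $H$; hence $\psi_i(F^2)=1$. Thus $F$ acts on $V_{\psi_i}$ with $F^2=1$, and the basis vectors $\tilde e_i$, $\tilde e_i' := F\tilde e_i$ give eigenvectors $\tilde e_i \pm \tilde e_i'$ of eigenvalues $\pm 1$, which up to rescaling will be the $v_i^\pm$.

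The final step is to transport $\Psi$ to these bases. Pin down $\Psi$ by $\Psi(e_g\otimes e_h)=\tilde e_1$ and $\Psi(e_g\otimes e_h')=\tilde e_2$; then $G_\Q$-equivariance under $F$ forces $\Psi(e_g'\otimes e_h')=\tilde e_1'$ and, because $F(e_g\otimes e_h')=\alpha_h^2\,e_g'\otimes e_h$, it also forces $\Psi(e_g'\otimes e_h)=\alpha_h^{-2}\tilde e_2'$. I would then expand each of the four tensors $v_g^? \otimes v_h^?$ in the basis $\{e_g^{(\prime)}\otimes e_h^{(\prime)}\}$, apply $\Psi$, and finally rescale $v_i^+$ and $v_i^-$ by suitable powers of $\lambda$ to absorb the factors coming from $\alpha_h^{\pm 2}$. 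The two cases $a=\pm 1$ are treated identically after the substitution $\alpha_h\in\{-1/\lambda,1/\lambda\}$, and the resulting identities match those in the lemma.

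The main obstacle is not conceptual but purely combinatorial bookkeeping: one must ensure that the rescalings of $v_g^?$, $v_h^?$ and $v_i^\pm$, together with the asymmetric factor $\alpha_h^{-2}$ in $\Psi(e_g'\otimes e_h)$, conspire to produce the clean $\pm 1$ coefficients stated, and that the dependence on the sign of $a$ matches the case distinction of the lemma.
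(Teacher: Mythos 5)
Your plan is essentially the same as the paper's proof: explicit bases adapted to the $G_K \subset G_\Q$ coset structure, Frobenius eigenvectors, and a direct computation of their images under $\Psi$. The paper streamlines the final bookkeeping by first renormalizing each pair $\{u_\varphi, v_\varphi\}$ so that $\rho_\varphi(\Frob_p)$ becomes the symmetric matrix $\left(\begin{smallmatrix}0&\zeta\\\zeta&0\end{smallmatrix}\right)$, after which the $\Frob_p$-eigenvectors are simply $u_\varphi \pm v_\varphi$ and the stated $\pm 1$ coefficients drop out at once, without the extra rescaling by powers of $\lambda$ that you anticipate having to absorb at the end.
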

\begin{proof}
	Let $\psi$ be a finite order Hecke character of $K$, let $\varphi:=\theta(\psi)$ and denote $\Frob_p\in G_\Q\smallsetminus G_K$ a Frobenius element at $p$. As explained in	\cite[\S 2.2]{DLR17}, we can choose a basis $\{u_\varphi,v_\varphi\}$ of $V_\varphi$ such that, if $$\rho_\varphi: G_\Q\longrightarrow\GL(V_\varphi)$$ is the $\ell$-adic representation attached to $\varphi$, then $v_\varphi=\rho_\varphi(\Frob_p)u_\varphi$ and with respect to this basis,
	\begin{equation*}
	\rho_\varphi(\sigma)=\begin{pmatrix}
	\psi(\sigma)&0\\0&\psi'(\sigma)
	\end{pmatrix} \text{ for } \sigma\in G_K; \qquad \rho_\varphi(\tau)=\begin{pmatrix}
	0&\eta(\tau)\\ \eta'(\tau)& 0
	\end{pmatrix} \text{ for } \tau\in G_\Q\smallsetminus G_K.
	\end{equation*}
	Here ${\psi'}$ denotes the character defined by $\psi'(\sigma):=\psi(\Frob_p\sigma\Frob_p^{-1})$, and $\eta$ is a function on $G_\Q\smallsetminus G_K$. 
		Recall that the characteristic polynomial for the action of $\Frob_p$ on $V_\varphi$ is $x^2-a_p(\varphi)x+\chi_\varphi(p)=x^2-\epsilon_\psi(p),$
	where $\chi_\varphi=\chi_K\epsilon_\psi$ and $\epsilon_\psi$ is the central character of $\psi$.
	We can always choose $\{u_\varphi,  u_\varphi \}$ such that $\rho_\varphi(\Frob_p)=\begin{pmatrix}
	0&\zeta\\ \zeta&0
	\end{pmatrix},$ with $\zeta^2=\epsilon_\psi(p)$.
	Then a basis of eigenvectors for the action of $\Frob_p$ on $V_{\varphi}$ is 
	\begin{equation*}
	\{v_\varphi^{{\zeta}}=u_\varphi+v_\varphi, \qquad v_\varphi^{{-\zeta}}=u_\varphi-v_\varphi\}
	\end{equation*}
	with eigenvalue {$\zeta$, $-\zeta$} respectively.
	Using this notation we obtain the basis 
	\begin{align*}
		(u_g,v_g), \ \  (u_h,v_h), \ \  (u_1,v_1),  \ \ (u_2,v_2) 
	\end{align*}
	of $V_g$,  $V_h$, $V_{\psi_1}, V_{\psi_2}$ where we denoted $u_i:=u_{\theta(\psi_i)}$ and $v_i:=v_{\theta(\psi_i)}$. The corresponding matrix $\rho_\varphi(\Frob_p)$ is 
	\begin{align*}\label{matrix}
		\begin{pmatrix}
		0 & \lambda \\ \lambda & 0
		\end{pmatrix}, \ \  \begin{pmatrix}
		0 & 1/\lambda \\ 1/\lambda & 0
		\end{pmatrix}, \ \ \begin{pmatrix}
		0 & 1 \\ 1 & 0
		\end{pmatrix}, \ \ \begin{pmatrix}
		0 & 1 \\ 1 & 0
		\end{pmatrix}.
	\end{align*}
Consider the basis
\begin{equation*}
	(u_g\otimes u_h, u_g\otimes v_h, v_g\otimes u_h, v_g\otimes v_h)
\end{equation*}	
of 	
$V_g\otimes V_h$, and the basis
\begin{equation*}
	(u_1,v_1,u_2,v_2)
\end{equation*}  of $V_{\psi_1}\oplus V_{\psi_2}$. 
Comparing the matrices above, we conclude that the isomorphism $\Psi$ is given by the rule
	\begin{equation*}
	u_g\otimes u_h\mapsto u_1, \ \ \ u_g\otimes v_h\mapsto u_2, \ \ \ v_g\otimes u_h\mapsto v_2, \ \ \ v_g\otimes v_h\mapsto v_1.
	\end{equation*}
	Moreover, since the eigenvalues of $\Frob_p$ acting on $V_g$ and $V_h$ are $\lambda, -\lambda, 1/\lambda, 1/\lambda,$ a basis of eigenvectors for $V_g\otimes V_h$ is 
	\begin{align*}
	v_g^{{\lambda}}\otimes v^{{1/\lambda}}_h=(u_g+v_g)\otimes(u_h+ v_h), \qquad  v_g^{{\lambda}}\otimes v^{{-1/\lambda}}_h=(u_g+v_g)\otimes(u_h-v_h), \\ v_g^{{-\lambda}}\otimes v^{{1/\lambda}}_h=(u_g-v_g)\otimes(u_h+v_h), \qquad v_g^{{-\lambda}}\otimes v^{{-1/\lambda}}_h=(u_g-v_g)\otimes(u_h-v_h).
	\end{align*}
	with eigenvalues \color{black}$+1,-1,-1,+1$ \color{black}  respectively.
	On the other hand, a basis of eigenvectors for $V_{\psi_1}\oplus V_{\psi_2}$ is 
	\begin{equation*}
	(v_1^{{+}}:=u_1+v_1, \ \ \ v_1^{{-}}:=u_1-v_1, \ \ \ v_2^{{+}}:=u_2+v_2, \ \ \  v_2^{{-}}:=u_2-v_2)
 	\end{equation*}
%	with eigenvalues $+1,-1,+1,-1$.
	In conclusion,
	\begin{equation*}
	v_g^{{\lambda}}\otimes v^{{1/\lambda}}_h\mapsto v_1^{{+}}+v_2^{{+}}, \ \ v_g^{{\lambda}}\otimes v^{{-1/\lambda}}_h\mapsto v_1^{{-}}-v_2^{{-}}, \ \ v_g^{{-\lambda}}\otimes v^{{1/\lambda}}_h\mapsto v_1^{{-}}+v_2^{{-}}, \ \ v_g^{{-\lambda}}\otimes v^{{-1/\lambda}}_h\mapsto v_1^{{+}}-v_2^{{+}}.
	\end{equation*}
%\color{blue}
%In order to prove the uniqueness, note that, if $\varphi\in\{g,h,\theta(\psi_1),\theta(\psi_2)  \}$, then the basis $\{ u_\varphi,v_\varphi \}$ is unique up to transformations of the form
%\begin{equation*}
%	(u_\varphi,v_\varphi)\mapsto (au_\varphi+bv_\varphi, bu_\varphi+av_\varphi).
%\end{equation*}
%Indeed, let $\begin{pmatrix}
%0 & \lambda \\ \lambda & 0
%\end{pmatrix}$ be the matrix corresponding to $\rho_\varphi(\Frob_p)$ with respect to $\{ u_\varphi,v_\varphi \}$,  and let $$\tilde{u}_\varphi:= au_\varphi + bv_\varphi; \qquad \tilde{v}_\varphi:= cu_\varphi + dv_\varphi. $$
%Then 
%\begin{equation*}
%	\rho_\varphi(\Frob_p)(\tilde{u}_\varphi) = \lambda(av_\varphi + bu_\varphi); \qquad \rho_\varphi(\Frob_p)(\tilde{v}_\varphi) = \lambda(cv_\varphi + du_\varphi).
%\end{equation*}	
%Thus $\rho_\varphi(\Frob_p)$ corresponds $\begin{pmatrix}
%0 & \lambda \\ \lambda & 0
%\end{pmatrix}$ also with respect to the basis $\{\tilde{u}_\varphi, \tilde{v}_\varphi\}$ if and only if $a=d$ and $b=c$. Similar computations yeld to the same result for the other matrices of \eqref{matrix}.
%
%The uniqueness follows from the fact that the transformation \begin{equation*}
%(u_i,v_i)\mapsto (au_i+bv_i, bu_i+av_i)
%\end{equation*}
%induces 
%\begin{equation*}
%	(v_i^+,v_i^-)\mapsto ((a+b)u_i+(a+b)v_i, )
%\end{equation*}

\end{proof}

Recall that 
\begin{equation*}
	V_{\psi_i}=\Ind_{G_K}^{G_\Q}(\psi_i)=\{  v:G_\Q\longrightarrow L(\psi)\mid v(\sigma\tau)=\psi_i(\sigma)v(\tau) \ \forall\sigma\in G_K, \tau\in G_\Q  \} = L \oplus L\cdot \Frob_p.
\end{equation*} The proof of the previous lemma shows that we may choose the vectors 
 $ v_i^\pm$ such that 
\begin{equation}\label{vi(1)=1}
	v_i^+(1)=v_i^-(1)=1.
\end{equation} 
Any other choice of basis would be of the form $(a_i v_i^+,b_i v_i^-)$ for some scalars $a_i,b_i\in L^\times$, and would yield the same formula for $I_p(f,g_\alpha,h_\alpha)$ up to a scalar in $L^\times$. This is fine because that is precisely the ambiguity we are working with in our framework, as explained in the introduction.

As in  previous sections, denote $$V:=V_f\otimes V_g\otimes V_h, \quad V_1:=V_f\otimes V_{\psi_1}, \quad V_2:=V_f\otimes V_{\psi_2}$$
The map $\Psi$ of Lemma \ref{lem: decomposition of Vgh and eigenvectors} induces the isomorphism 
\begin{equation}\label{eq: decomposition of V=V1+V2}
\Psi: V\overset{\cong}{\longrightarrow} V_1\oplus V_2.
\end{equation}

There is then a decomposition of the Selmer group
\begin{equation}\label{eq: decomposition of Selmer}
\Psi_*:\Sel_p(V)\overset{\cong}{\longrightarrow}\Sel_p(V_1)\oplus\Sel_p(V_2),
\end{equation}  and the analogous decomposition holds for the relaxed and the strict Selmer groups associated to $V$. By Artin's formalism, decomposition  \eqref{eq: decomposition of V=V1+V2} yields the factorization 
\begin{equation*}\label{eq: factorisation of complex Lfns}
L(E,\rho,s)=L(E,\psi_1,s)L(E,\psi_2,s)
\end{equation*}
 of classical $L$-series. As explained in \S\ref{sec: selmer groups},  Assumption \ref{ass: rank 0} on the analytic rank of $E\otimes\rho$ implies that
\begin{equation*}
\Sel_p(V_1)=\Sel_p(V_2)=0.
\end{equation*}
Denote $V_{\psi_i}^\pm$ the eigenspace of $V_{\psi_i}$ on which $\Frob_p$ acts as $\pm1$, and let 
\begin{equation*}
V_i^\pm:=V_f\otimes V_{\psi_i}^\pm.
\end{equation*}
%Denote $\coh_f(K_p,V_f)_{L_p}^\pm:=\coh_f(K_p,V_f)^\pm\otimes{L_p}$, and $\coh_s(K_p,V_f)_{L_p}^\pm:=\coh_s(K_p,V_f)^\pm\otimes{L_p}$.

%Analogously as in \S\ref{sec: selmer groups}, we have the following result.

\begin{lemma}\label{lem: H1f and H1s of Vi } 
	
	There are isomorphisms
	\begin{equation}\label{eq: H1fpm and H1spm}%\label{eq: H1fpm}
		\coh_f^1(\Q_p,V_i^{\pm}) \ \cong \ \coh^1_f(K_p,V_f)^\pm\otimes V_{\psi_i}^\pm \ \text{ and } \ \coh_s^1(\Q_p,V_i^{\pm}) \ \cong \ \coh^1_s(K_p,V_f)^\pm\otimes V_{\psi_i}^\pm.
	\end{equation}
	The Bloch--Kato logarithm and dual exponential yield isomorphisms
	\begin{equation}\label{eq: bk log e exp 5.2}
			\coh_f^1(\Q_p,V_i^{\pm}) \overset{\log_{\pm}}{\longrightarrow} L_p, \qquad \coh_s^1(\Q_p,V_i^{\pm})  \overset{\exp^*_{\pm}}{\longrightarrow} L_p.
	\end{equation}
	Moreover, for $i\in\{1,2\}$, there are isomorphisms  \begin{equation}\label{eq: Sel(p)(Vi) iso con H1s}
	 \Sel_{(p)}(V_i)\overset{\partial_p}{\longrightarrow} \coh_s^1(\Q_p,V_i)\overset{\exp^*}{\longrightarrow}L_p^2.
	\end{equation}

	%\begin{equation*}	
	%\coh^1_f(\Q_p,V_i^\pm)\overset{\cong}{\longrightarrow}(\coh^1_f(K_p,V_f)\otimes V_{\psi_i}^\pm)^{G_{\Q_p}}, \qquad \coh^1_s(\Q_p,V_i^\pm)\overset{\cong}{\longrightarrow}(\coh^1_s(K_p,V_f)\otimes V_{\psi_i}^\pm)^{G_{\Q_p}}.
	%\end{equation*}	

\end{lemma}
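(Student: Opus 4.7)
The plan is to imitate closely the arguments already carried out in \S\ref{sec: selmer groups} for the full representation $V$, since the Frobenius decomposition of $V_{\psi_i}|_{G_{K_p}}$ here plays exactly the role that the $\alpha,\beta$-decomposition of $V_g \otimes V_h$ played there. Because $p\cO_K$ splits completely in $H$ and we have $H_\fp = K_p$, the character $\psi_i$ is trivial on $G_{K_p}$, so $V_{\psi_i}|_{G_{K_p}}$ is trivial and $V_{\psi_i}$ breaks as a $G_{\Q_p}$-module into the direct sum of two one-dimensional pieces $V_{\psi_i}^\pm$, each unramified at $p$ and on which $\Frob_p$ acts as $\pm 1$.

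For the identifications \eqref{eq: H1fpm and H1spm} I would run the inflation--restriction argument of Lemma \ref{lem: H1(Q,V) and H(K,Vf)} verbatim: since $[K_p:\Q_p]=2$ is prime to $p$ and $V_{\psi_i}^\pm|_{G_{K_p}}$ is trivial, one obtains
\begin{equation*}
\coh^1(\Q_p,\, V_f \otimes V_{\psi_i}^\pm) \;=\; \bigl(\coh^1(K_p, V_f) \otimes V_{\psi_i}^\pm\bigr)^{\Frob_p} \;=\; \coh^1(K_p, V_f)^\pm \otimes V_{\psi_i}^\pm,
\end{equation*}
with matching signs on the right so that $\Frob_p$ acts trivially on the tensor. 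The very same computation goes through for $\coh^1_f$ and $\coh^1_s$, because the Bloch--Kato local conditions at $p$ commute with restriction to the unramified quadratic extension $K_p/\Q_p$.

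The two Bloch--Kato maps in \eqref{eq: bk log e exp 5.2} are then obtained by tensoring the one-dimensional isomorphisms $\log_E : E(K_p) \otimes \Q_p \to K_p(\chi_E)$ and $\exp^*_\pm : \coh^1_s(K_p, V_f)^\pm \to \Q_p$ (supplied by \eqref{prop: H1f Kp, E}, \eqref{eq: log bk E}, Proposition \ref{prop: exp of Vf} and \eqref{eq: exp pm onto Qp}) with the one-dimensional $L$-line $V_{\psi_i}^\pm$, whose basis $v_i^\pm$ is normalized up to $L^\times$ by \eqref{vi(1)=1}. Finally, \eqref{eq: Sel(p)(Vi) iso con H1s} follows exactly as in Lemma \ref{lem: relaxed selmer iso with H1s}: Poitou--Tate duality applied to the pair $(V_i, V_i^\vee(1))$, combined with the vanishing $\Sel_p(V_i)=0$ (noted just below Assumption \ref{ass: rank 0} via the factorization of complex $L$-series and \cite[Theorem B]{BD97}), shows that $\partial_p$ is an isomorphism onto $\coh^1_s(\Q_p, V_i)$; composing with $\exp^*_+ \oplus \exp^*_-$ on the Frobenius decomposition of the target then yields the map to $L_p^2$. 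The only delicate point I anticipate is the bookkeeping of Frobenius signs in the inflation--restriction step (making sure the $\Frob_p$-invariant piece of $\coh^1(K_p, V_f) \otimes V_{\psi_i}^\pm$ is the matched-sign tensor and not the mixed one); otherwise the proof is a line-by-line transcription of the $V$-case.
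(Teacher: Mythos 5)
Your proposal is correct and matches the paper's intended proof, which simply refers back to the proofs of Lemma \ref{lem: H1(Q,V) and H(K,Vf)}, Lemma \ref{lem: tipo lem 4.1 of DR3} and Corollary \ref{cor: Sel(p) has dimension 4} together with \eqref{ah-bh}; since $V_{\psi_i}^\pm$ restricted to $G_{K_p}$ is trivial and carries $\Frob_p$-eigenvalue exactly $\pm 1$, these arguments transcribe line by line to $V_i = V_f \otimes V_{\psi_i}$. One small difference of route worth noting: the paper's Lemma \ref{lem: H1(Q,V) and H(K,Vf)} first passes to $\coh^1_f(\Q_p, V_f^+\otimes\cdot)$ via \cite[Lemma 2.4.1]{DR19Stark} before applying inflation--restriction to $K_p$, whereas you invoke directly that Bloch--Kato conditions commute with restriction along the unramified quadratic extension (a restriction--corestriction argument using $[K_p:\Q_p]=2$ invertible). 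Both are valid and reach the same $\coh^1_f(K_p,V_f)^\pm \otimes V_{\psi_i}^\pm$; yours is slightly more direct, the paper's has the advantage of making the exceptional case $\triangle\cdot\heartsuit\cdot a=-1$ visible, which is what \eqref{ah-bh} is used to track.
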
	
\begin{proof}
 The isomorphisms are obtained as in the proofs of  Lemma \ref{lem: H1(Q,V) and H(K,Vf)},  Lemma \ref{lem: tipo lem 4.1 of DR3} and Corollary \ref{cor: Sel(p) has dimension 4}, using \eqref{ah-bh}.
\end{proof}

Similarly as in Remark \ref{rem: logaritmos}, the Bloch--Kato maps \eqref{eq: bk log e exp 5.2} are related to the logarithm and the dual exponential of the representation $V_f$ as follows. Since $V_{\psi_i}$ is the Galois representation attached to the theta series $\theta(\psi_i)$, \eqref{pairing weight one} yields a pairing 

\begin{equation}\label{un altro pairing}
	\langle \ , \ \rangle:\Dderham(V_{\psi_i}^+)\times \Dderham(V_{\psi_i}^-)\longrightarrow\Dderham(L_p(\chi_{i}))
\end{equation} 
where $\chi_{i}$ is the Nebentype character of $\theta(\psi_i)$. 
Since $V_{\psi_i}$ is unramified, we have  $$\Dderham(V_{\psi_i}^\pm)=(V_{\psi_i}^\pm\otimes K_p)^{G_{\Q_p}}=V_{\psi_i}^\pm\otimes K_p^\pm.$$ 
One can introduce differentials $\omega_i^\pm\in\Dderham(V_{\psi_i}^\pm)$ as in \eqref{differentials}, and elements $\Omega_i^\pm\in K_p^\pm$ characterised by the following relation
\begin{equation*}
	v_i^\pm\otimes\Omega_i^\pm = \omega_i^\pm.
\end{equation*}
%where $\hat{\Q}_p^{\mathrm{ur}}$ denotes the $p$-adic completionof the maximal unramified extension of $\Q_p$. 
Thus \eqref{un altro pairing} in turn induces a pairing
\begin{equation*}
	\langle \ , \ \rangle: V_{\psi_i}^+ \times \Dderham(V_{\psi_i}^-)\longrightarrow L_p
\end{equation*} 
by setting $$\langle v_i^+,\omega\rangle := \dfrac{1}{\Omega_i^+}\langle \omega_i^+ ,\omega\rangle$$ for all $\omega\in\Dderham(V_{\psi_i}^-)$. %\footnote{Entindo que necesitamos esto, no? Es lo que aplicamos justo abajo cuando pones $\langle y, \omega_i^\pm\rangle$, no? Puedes completarlo? En el case $\alpha \beta$ tambien aparece aunque no lo mencionamos tan explicitamente. Aqui estaria bien hacerlo para que no haya dudas del significado.}
Hence we have
\begin{equation}\label{eq: def of omegapm}
	\exp^*_\pm(x\otimes v) = \exp_{V_f}^*(x)\cdot\langle v, \omega_i^\pm\rangle
\end{equation} 
 for any $x\in \coh^1_s(K_p,V_f)^\pm$, $v\in V_{\psi_i}^\pm$.\color{black}%\footnote{Si es verdad que $y\in  V_{\psi_i}^\pm$, mejor denotarlo $v$ en lugar de $y$. }

%\begin{remark}\label{remPsi}
%	The map \begin{equation*}
%	\Psi_*:\coh^1(\Q,V)\overset{\cong}{\longrightarrow}\coh^1(\Q,V_1\oplus V_2)
%	\end{equation*}
%	induced in cohomology by the isomorphism \eqref{eq: decomposition of V=V1+V2} fits in the following commutative diagram 
%	\begin{equation*}
%	\xymatrix{\coh^1(\Q,V)\ar[d]\ar[r]^{\Psi_*} & \coh^1(\Q,V_1\oplus V_2)\ar[d]\\
%		(\coh^1(H,V_f)\otimes V_{gh})^{\Gal(H/\Q)}\ar[r]^{1\otimes \Psi} & (\coh^1(H,V_f)\otimes (V_1\oplus V_2))^{\Gal(H/\Q)}
%	}
%	\end{equation*}
%	in which all the arrows are isomorphisms.
%\end{remark}

%Recall the dual exponential \eqref{eq: exp pm onto Qp} relative to the representation of the elliptic curve 
%\begin{equation*}
%\exp^*_\pm:\coh^1_s(K_p,V_f)^{\pm}\overset{\cong}{\longrightarrow}\Q_p.
%\end{equation*}

Recall from Lemma \ref{lem: decomposition of Vgh and eigenvectors} the basis $\{ v_i^+,v_i^-\}$ for $V_{\psi_i}$, and from Corollary \ref{cor: basis of Sel(p)} the local classes $X_\pm\in\coh_s(K_p,V_f)_{L_p}^\pm$ such that $\exp_\pm^*(X_\pm)=1$.

\begin{corollary}\label{cor: basis of sel(p)(Vi)}
	 For $i\in\{ 1,2 \}$, $\Sel_{(p)}(V_i)$ admits a basis	$
	\{ \xi_i^+, \xi_i^- \}$ characterized as
	\begin{equation*}
	\partial_p \xi_i^{\pm }=X_{\pm}\otimes v_i^\pm.
	\end{equation*}	
under the identifications given in \eqref{eq: H1fpm and H1spm}.
\end{corollary}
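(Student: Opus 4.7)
The plan is to deduce this corollary directly from Lemma \ref{lem: H1f and H1s of Vi } by decomposing everything along the $\pm1$-eigenspaces of $\Frob_p$. First, I would invoke the chain of isomorphisms \eqref{eq: Sel(p)(Vi) iso con H1s}, namely
\[
\Sel_{(p)}(V_i) \ \overset{\partial_p}{\longrightarrow} \ \coh_s^1(\Q_p,V_i) \ \overset{\exp^*}{\longrightarrow} \ L_p^2,
\]
to reduce the construction of the desired basis to producing a basis of $\coh^1_s(\Q_p, V_i)$ with the prescribed shape under the identifications of Lemma \ref{lem: H1f and H1s of Vi }.

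Next, I would observe that since $V_{\psi_i} = V_{\psi_i}^+ \oplus V_{\psi_i}^-$ as $G_{\Q_p}$-modules (because $\Frob_p$ acts semisimply with eigenvalues $\pm 1$), there is a corresponding decomposition
\[
\coh_s^1(\Q_p, V_i) \ = \ \coh_s^1(\Q_p, V_i^{+}) \ \oplus \ \coh_s^1(\Q_p, V_i^{-}).
\]
Combining this with the first isomorphism of \eqref{eq: H1fpm and H1spm} yields
\[
\coh_s^1(\Q_p, V_i) \ \cong \ \bigl(\coh^1_s(K_p, V_f)^+ \otimes V_{\psi_i}^+\bigr) \ \oplus \ \bigl(\coh^1_s(K_p, V_f)^- \otimes V_{\psi_i}^-\bigr).
\]
Each summand is one-dimensional over $L_p$: indeed, $\coh^1_s(K_p, V_f)^{\pm}$ is generated by $X_{\pm}$ (this is the definition in Corollary \ref{cor: basis of Sel(p)}, dualising the fact that $\exp^*_\pm(X_\pm) = 1$), and $V_{\psi_i}^\pm$ is generated by $v_i^\pm$. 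Consequently $\{X_+ \otimes v_i^+,\ X_- \otimes v_i^-\}$ is an $L_p$-basis of $\coh^1_s(\Q_p, V_i)$.

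Finally, since $\partial_p: \Sel_{(p)}(V_i) \to \coh^1_s(\Q_p, V_i)$ is an isomorphism by Lemma \ref{lem: H1f and H1s of Vi }, we may define $\xi_i^\pm \in \Sel_{(p)}(V_i)$ to be the unique preimages of $X_\pm \otimes v_i^\pm$ under $\partial_p$. By construction these elements satisfy $\partial_p \xi_i^\pm = X_\pm \otimes v_i^\pm$ and form the desired basis. There is really no obstacle here: the corollary is a bookkeeping statement that packages Lemma \ref{lem: H1f and H1s of Vi } and the Frobenius-eigenspace decomposition of $V_{\psi_i}$ into the notation that will be used in the subsequent arguments.
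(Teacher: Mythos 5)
Your proof is correct and follows exactly the same route as the paper, which simply states that the corollary follows from Lemma~\ref{lem: H1f and H1s of Vi } by repeating the argument of Corollary~\ref{cor: basis of Sel(p)}; you have unpacked that reference into the eigenspace decomposition of $V_{\psi_i}$ and the identifications of~\eqref{eq: H1fpm and H1spm}, which is precisely what is intended. (One trivial slip: the isomorphism of~\eqref{eq: H1fpm and H1spm} you use for the singular quotient is the \emph{second} displayed one, not the first.)
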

\begin{proof}
	The statement follows after applying Lemma \ref{lem: H1f and H1s of Vi } and using the same argument as in the proof of Corollary \ref{cor: basis of Sel(p)}.
\end{proof}

\begin{proposition}\label{prop: basis selmer V=V1+V2}
	Let 
	$$\Psi_*:\Sel_{(p)}(V)\overset{\cong}{\longrightarrow}\Sel_{(p)}(V_1)\oplus\Sel_{(p)}(V_2)$$ 
	denote the isomorphism induced by \eqref{eq: decomposition of Selmer}.

	\begin{enumerate}[$\bullet$]
		\item If $a=+1$ then %$\alpha_g\cdot\alpha_h=-1$ then
		\begin{align*}
		\Psi_*{\xi}^{\alpha\alpha}=\xi_1^--\xi_2^-, \qquad \Psi_*{\xi}^{\alpha\beta}=\xi_1^++\xi_2^+,\qquad
		\Psi_*{\xi}^{\beta\alpha}=\xi_1^+-\xi_2^+, \qquad \Psi_*{\xi}^{\beta\beta}=\xi_1^-+\xi_2^-.
		\end{align*}
		\item If $a=-1$ then %$\alpha_g\cdot\alpha_h=+1$ then
			\begin{align*}
		\Psi_*{\xi}^{\alpha\alpha}=\xi_1^++\xi_2^+, \qquad \Psi_*{\xi}^{\alpha\beta}=\xi_1^--\xi_2^-,\qquad
		\Psi_*{\xi}^{\beta\alpha}=\xi_1^-+\xi_2^-, \qquad \Psi_*{\xi}^{\beta\beta}=\xi_1^+-\xi_2^+.
		\end{align*}
			\end{enumerate}
 \color{black}
\end{proposition}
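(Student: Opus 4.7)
The strategy is to compute $\partial_p \Psi_* \xi^{\triangle\heartsuit}$ for each of the four basis vectors of $\Sel_{(p)}(V)$ and compare with $\partial_p$ of the proposed target. Since $\partial_p:\Sel_{(p)}(V_i)\to \coh^1_s(\Q_p,V_i)$ is an isomorphism by \eqref{eq: Sel(p)(Vi) iso con H1s}, and likewise for $V$ by Lemma \ref{lem: relaxed selmer iso with H1s}, the identities at the level of Selmer groups will follow from the corresponding identities at the level of singular cohomology.

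First I would invoke naturality of $\partial_p$ with respect to the $G_\Q$-equivariant isomorphism $\Psi$ of \eqref{eq: decomposition of V=V1+V2} to obtain $\partial_p \circ \Psi_* = \Psi_* \circ \partial_p$. On the left-hand side, Corollary \ref{cor: basis of Sel(p)} gives
\begin{equation*}
\partial_p\xi^{\triangle\heartsuit}=X_{\triangle\heartsuit}\otimes v_g^\triangle\otimes v_h^\heartsuit,
\end{equation*}
where $X_{\triangle\heartsuit}\in \coh^1_s(K_p,V_f)$ lives in the Frobenius eigenspace matching $(\alpha_g\triangle \cdot \alpha_h\heartsuit)^{-1}$, since by Lemma \ref{lem: H1(Q,V) and H(K,Vf)} the $G_{\Q_p}$-invariants of $\coh^1_s(K_p,V_f)\otimes V_{gh}^{\triangle\heartsuit}$ force the Frobenius eigenvalue on the $V_f$-factor to cancel that on $v_g^\triangle\otimes v_h^\heartsuit$. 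In the present setting the latter eigenvalue equals $+1$ or $-1$ depending on the parity of $\triangle\heartsuit$ and the value of $a$.

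Next I would apply $\Psi$ to the $V_{gh}$-tensor using Lemma \ref{lem: decomposition of Vgh and eigenvectors}. For instance, for $a=+1$ we have $\Psi(v_g^\alpha\otimes v_h^\alpha)=v_1^--v_2^-$ while the matching Frobenius eigenspace on $\coh^1_s(K_p,V_f)$ is the $(-1)$-part, so $X_{\alpha\alpha}=X_-$ and therefore
\begin{equation*}
\Psi_*\partial_p\xi^{\alpha\alpha}
= X_-\otimes(v_1^--v_2^-)
= X_-\otimes v_1^- - X_-\otimes v_2^-.
\end{equation*}
By Corollary \ref{cor: basis of sel(p)(Vi)} the right-hand side equals $\partial_p\xi_1^- - \partial_p\xi_2^- = \partial_p(\xi_1^--\xi_2^-)$, and injectivity of $\partial_p$ on $\Sel_{(p)}(V_1)\oplus\Sel_{(p)}(V_2)$ yields $\Psi_*\xi^{\alpha\alpha}=\xi_1^--\xi_2^-$. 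The remaining three identities for $a=+1$ and all four for $a=-1$ follow identically, replacing $\Psi(v_g^\triangle\otimes v_h^\heartsuit)$ by the expression furnished by Lemma \ref{lem: decomposition of Vgh and eigenvectors} and verifying case by case that the Frobenius eigenspace of $\coh^1_s(K_p,V_f)$ that contains $X_{\triangle\heartsuit}$ agrees with the eigenspace in which $v_i^\pm$ lives (so that both factors pair into the required $\xi_i^\pm$).

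The one bookkeeping obstacle is keeping the sign conventions consistent: one must verify that for each pair $(a,\triangle,\heartsuit)$ the Frobenius eigenvalue $\alpha_g\triangle\cdot\alpha_h\heartsuit$, determined from \eqref{alfa e betas}, is $+1$ precisely when $\Psi(v_g^\triangle\otimes v_h^\heartsuit)$ lies in $V_{\psi_1}^+\oplus V_{\psi_2}^+$, and $-1$ otherwise. This is immediate from Lemma \ref{lem: decomposition of Vgh and eigenvectors} and is the reason the table of signs in the statement switches between $a=+1$ and $a=-1$. Once this compatibility is checked, the proof reduces to eight direct applications of the naturality argument above.
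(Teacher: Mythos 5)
Your proposal is correct and follows essentially the same route as the paper's proof: both reduce to the singular quotient via the naturality/commutativity of $\partial_p$ with respect to $\Psi$, identify $X_{\triangle\heartsuit}$ with $X_{\pm}$ by matching the Frobenius eigenvalue $\alpha_g\alpha_h = -a$ on $V_{gh}^{\triangle\heartsuit}$ against the eigenspace of $\coh^1_s(K_p,V_f)$ forced by Lemma~\ref{lem: H1(Q,V) and H(K,Vf)}, and then feed in the explicit description of $\Psi(v_g^\triangle\otimes v_h^\heartsuit)$ from Lemma~\ref{lem: decomposition of Vgh and eigenvectors}. The paper packages the naturality step as a pair of commutative diagrams while you state it as a naturality identity, but this is a presentational difference rather than a mathematical one.
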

\begin{proof}
	Combining \eqref{eq: Sel(p)(Vi) iso con H1s} and Lemma \ref{lem: relaxed selmer iso with H1s}, we obtain a commutative diagram
	\begin{equation*}
	\xymatrix{  \Sel_{(p)}(V)\ar[r]^{\Psi_*\qquad}\ar[d]^{\partial_p} & \Sel_{(p)}(V_1)\oplus \Sel_{(p)}(V_2)\ar[d]^{\partial_p\oplus\partial_p}\\
		\coh^1_s(\Q_p,V)\ar[r]^{\Psi_*\qquad} & \coh^1_s(\Q_p,V_1)\oplus\coh^1_s(\Q_p,V_2)
	}
	\end{equation*}
	where each arrow is an isomorphism.
	Moreover, by Lemma \ref{lem: H1(Q,V) and H(K,Vf)} and Lemma \ref{lem: H1f and H1s of Vi },  we also have the commutative diagram
	\begin{equation*}
	\xymatrix{  
		\coh^1_s(\Q_p,V)\ar[r]^{\Psi_*\qquad}\ar[d] & \coh^1_s(\Q_p,V_1)\oplus\coh^1_s(\Q_p,V_2)\ar[d]\\
		(\coh^1_s(K_p,V_f)\otimes V_{gh})^{G_{\Q_p}} \ar[r]& 	(\coh^1_s(K_p,V_f)\otimes V_1)^{G_{\Q_p}}\oplus 	(\coh^1_s(K_p,V_f)\otimes V_2)^{G_{\Q_p}}
	}
	\end{equation*}
	where each arrow is an isomorphism and the bottom horizontal arrow is ${1\otimes\Psi} $. 
	By definition,
	\begin{equation*}
	\partial_p{\xi}^{\alpha\alpha}=X_{{\alpha\alpha}}\otimes v_g^\alpha\otimes v_h^\alpha\in(\coh^1_s(K_p,V_f)\otimes V_{gh}^{\alpha\alpha})^{G_{\Q_p}}\subseteq(\coh^1_s(K_p,V_f)\otimes V_{gh})^{G_{\Q_p}}.
	\end{equation*}
By \eqref{ah-bh} we have
		\begin{equation*}
			X_{\alpha\alpha}=\begin{cases}
			X_+ & \text{ if } \alpha_g\cdot\alpha_h=+1;\\
			X_- & \text{ if } \alpha_g\cdot\alpha_h=-1,
			\end{cases}
		\end{equation*}
hence we deduce
		\begin{equation*}
				(1\otimes\Psi)(\partial_p{\xi}^{\alpha\alpha})  = \begin{cases}
				X_{{\alpha\alpha}} \otimes (v_1^++v_2^+) = \partial_p\xi_1^+ + \partial_p\xi_2^+ & \text{ if } \alpha_g\cdot\alpha_h=+1\\	
				X_{{\alpha\alpha}}\otimes (v_1^--v_2^-)= \partial_p\xi_1^- - \partial_p\xi_2^- & \text{ if } \alpha_g\cdot\alpha_h=-1.
				\end{cases} 
		\end{equation*}
		Similar computations prove the remaining cases.
\end{proof}

\begin{lemma}\label{lem: tutti uguali H1f}
	%There are canonical identifications 
%	\begin{align*}
%		\coh^1_f(\Q_p,V_1^-)=\left(\coh^1_f(K_p,V_f)\otimes W_p^- \right)^{G_{\Q_p}}=\coh^1_f(\Q_p,V_2^-),\\
%		\coh^1_f(\Q_p,V_1^+)=\left(\coh^1_f(K_p,V_f)\otimes W_p^+ \right)^{G_{\Q_p}}=\coh^1_f(\Q_p,V_2^+).
%	\end{align*}
Let $\delta_p:E(K_p)\otimes\Q_p\longrightarrow\coh^1_f(K_p,V_f)$ denote Kummer's map.
\begin{enumerate}[$\bullet$]
	\item 
	If $a=+1$,%$\alpha_g\cdot\alpha_h=-1$, 
	there is a local point $P^+\in E(K_p)^+\otimes\Q_p$ such that 
	\begin{equation}\label{eq: resp tutti uguali -1}
	\qquad \pi_1^+\res_p(\xi_1^-) = \delta_pP^+\otimes v_1^+, \qquad  \pi_2^+\res_p(\xi_2^- ), = \delta_pP^+\otimes v_2^+ \qquad \pi_{\alpha\beta}\res_p(\xi^{\beta\beta})=\delta_pP^+\otimes v_g^\alpha\otimes v_h^\beta,
	\end{equation}
	\item If $a=-1$, %$\alpha_g\cdot\alpha_h=+1$, 
	 there is a local point $P^-\in E(K_p)^-\otimes\Q_p$ such that
	\begin{equation}\label{eq: resp tutti uguali +1}
	\qquad\pi_1^-\res_p(\xi_1^+) = \delta_pP^-\otimes v_1^-, \qquad  \pi_2^-\res_p(\xi_2^+)= \delta_pP^-\otimes v_2^-, \qquad \pi_{\alpha\beta}\res_p(\xi^{\beta\beta})=\delta_pP^-\otimes v_g^\alpha\otimes v_h^\beta; 
	\end{equation}
\end{enumerate}

\end{lemma}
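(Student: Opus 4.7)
The plan is to combine Proposition~\ref{prop: basis selmer V=V1+V2} with the explicit formulas of Lemma~\ref{lem: decomposition of Vgh and eigenvectors}. I treat the case $a=+1$; the case $a=-1$ is analogous, exchanging the roles of the $+$ and $-$ Frobenius eigenspaces. Starting from the identity $\Psi_*\xi^{\beta\beta} = \xi_1^- + \xi_2^-$, I apply $\res_p$ to both sides. Since $\res_p(\xi_2^-) \in \coh^1(\Q_p, V_2)$ has no $V_1$-component, projecting $\Psi_*\res_p(\xi^{\beta\beta})$ to $V_1$ and then to $V_1^+$ recovers $\pi_1^+\res_p(\xi_1^-)$, and symmetrically for $\pi_2^+\res_p(\xi_2^-)$.

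Next, I decompose $\res_p(\xi^{\beta\beta})$ according to the eigenspace splitting $V = \bigoplus_{\triangle,\heartsuit} V^{\triangle\heartsuit}$. The singular part is concentrated in $V^{\beta\beta}$ (a Frobenius $-1$-eigencomponent when $a=+1$), and since $\Psi$ preserves Frobenius eigenspaces, only the $V^{\alpha\beta}$- and $V^{\beta\alpha}$-finite parts contribute to the $V_i^+$-projections after $\Psi_*$. Writing
\[\pi_{\alpha\beta}\res_p(\xi^{\beta\beta}) = \delta_p(P_{\alpha\beta})\otimes v_g^\alpha\otimes v_h^\beta, \qquad \pi_{\beta\alpha}\res_p(\xi^{\beta\beta}) = \delta_p(P_{\beta\alpha})\otimes v_g^\beta\otimes v_h^\alpha\]
with $P_{\alpha\beta}, P_{\beta\alpha} \in E(K_p)^+ \otimes \Q_p$, and applying the formulas $\Psi(v_g^\alpha\otimes v_h^\beta) = v_1^++v_2^+$ and $\Psi(v_g^\beta\otimes v_h^\alpha) = v_1^+-v_2^+$ from Lemma~\ref{lem: decomposition of Vgh and eigenvectors}, I obtain
\[\pi_1^+\res_p(\xi_1^-) = \delta_p(P_{\alpha\beta}+P_{\beta\alpha})\otimes v_1^+, \qquad \pi_2^+\res_p(\xi_2^-) = \delta_p(P_{\alpha\beta}-P_{\beta\alpha})\otimes v_2^+.\]

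Setting $P^+ := P_{\alpha\beta}$ verifies the third identity of the lemma by construction, while the first two reduce to the key vanishing $P_{\beta\alpha} = 0$. This is the main obstacle. To establish it, I exploit Proposition~\ref{prop: kappa* and the basis}, which shows that $\xi^{\beta\beta}$ is proportional to the $g$-improved diagonal class $\boldsymbol{\kappa}_g^*(2,1,1)$, whose $p$-local restriction lies in $\coh^1(\Q_p, \fil^2(V))$. Under the eigenspace decomposition, the image of $\fil^2(V)$ in $V^{\beta\alpha}$ is the one-dimensional subspace $V^h := V_f^+ \otimes V_g^\beta \otimes V_h^\alpha$ governing the $h$-direction. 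In our exceptional setting both $\cE_g$ and $\cE_h$ vanish at $(2,1,1)$, and the companion $h$-improved construction of \cite[\S 8.3]{BSV19reciprocity}, combined with the simultaneous vanishing of the two Euler factors, should force the $V^h$-projection of $\boldsymbol{\kappa}_g^*(2,1,1)$ to vanish, yielding $P_{\beta\alpha} = 0$ as required.
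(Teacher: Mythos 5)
Your opening moves are the same as the paper's: start from the identity $\Psi_*\xi^{\beta\beta}=\xi_1^{-a}\pm\xi_2^{-a}$ of Proposition~\ref{prop: basis selmer V=V1+V2}, apply $\res_p$, and use the explicit change of basis of Lemma~\ref{lem: decomposition of Vgh and eigenvectors}. What you do that the paper's own proof does not make explicit is to keep track of \emph{both} the $V^{\alpha\beta}$ and the $V^{\beta\alpha}$ contributions. Since $\Psi$ carries $V^{\alpha\beta}\oplus V^{\beta\alpha}$ onto the whole of $V_1^a\oplus V_2^a$, the projection of $\Psi_*\res_p(\xi^{\beta\beta})$ to $\coh^1(\Q_p,V_1^a)\oplus\coh^1(\Q_p,V_2^a)$ equals $\Psi_*(\pi_{\alpha\beta}+\pi_{\beta\alpha})\res_p(\xi^{\beta\beta})$, and unwinding gives your formulas $\delta_p(P_{\alpha\beta}+P_{\beta\alpha})$ and $\delta_p(P_{\alpha\beta}-P_{\beta\alpha})$. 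You are therefore right that the Lemma is \emph{equivalent} to the vanishing $P_{\beta\alpha}=0$, i.e.\ $\pi_{\beta\alpha}\res_p(\xi^{\beta\beta})=0$. In the paper's proof (case $a=-1$), the step ``comparing \eqref{eq: prob1} and \eqref{eq: prob2}'' silently identifies the $(V_1^-\oplus V_2^-)$-projection of $\Psi_*\res_p(\xi^{\beta\beta})$ with $\Psi_*\pi_{\alpha\beta}\res_p(\xi^{\beta\beta})$ alone, and that identification is valid exactly when the $\pi_{\beta\alpha}$ contribution vanishes. So your diagnosis of the key obstacle is sharp, and it applies to the paper's own argument as written.

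What is not supplied is a proof of that vanishing. Your heuristic (both $\boldsymbol{\cE}_g$ and $\boldsymbol{\cE}_h$ vanish at $(2,1,1)$ when $\alpha_g\alpha_h=-a$, so the $h$-improvement ``should force'' $\pi_h\res_p(\boldsymbol{\kappa}_g^*(2,1,1))=0$) is plausible but is not an argument: the only constraint you actually invoke is $\res_p(\boldsymbol{\kappa}_g^*(2,1,1))\in\coh^1(\Q_p,\fil^2(V))$, which merely places $\pi_{\beta\alpha}\res_p(\xi^{\beta\beta})$ inside the one-dimensional space $\coh^1(\Q_p,V_f^+\otimes V_{gh}^{\beta\alpha})$, not at zero, and you give no relation between $\boldsymbol{\kappa}_g^*(2,1,1)$ and $\boldsymbol{\kappa}_h^*(2,1,1)$ that would do better. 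A way to close the gap without invoking the $h$-improvement is to notice that Corollary~\ref{prop: fine} is proved using singular parts only, hence is logically independent of this Lemma: it shows $\mathbf{K}^{\psi_1}$ and $\mathbf{K}^{\psi_2}$ equal one and the \emph{same} scalar times $\xi_1^{-a}$ and $\xi_2^{-a}$. Since both $\mathbf{K}^{\psi_i}$ come from the single class $\mathbf{K}$ (so that $\pi_a\res_p(\mathbf{K}^{\psi_i})$ recovers $h\cdot\res_p(\mathbf{K})^a$ under the Shapiro identification for both $i$), comparing with your expressions $P_{\alpha\beta}\pm P_{\beta\alpha}$ forces $P_{\beta\alpha}=0$. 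As submitted, however, your proof identifies the missing step without closing it.
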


\begin{proof}
	We prove the claim when $a=-1$, %$\alpha_g\cdot\alpha_h=+1$, 
	as the other case works similarly. By Proposition \ref{prop: basis selmer V=V1+V2},  
	\begin{equation*}
	\Psi_*\res_p(\xi^{\beta\beta})=\res_p(\xi_1^+)-\res_p(\xi_2^+)\in\coh^1(\Q_p,V_1)\oplus\coh^1(\Q_p,V_2)
	\end{equation*}
	Write
	\begin{equation*}
	\pi_{\alpha\beta}\res_p(\xi^{\beta\beta})= Q_-\otimes v_g^\alpha\otimes v_h^\beta\in\coh^1_f(K_p,V_f)^-\otimes V_{gh}^{\alpha\beta}.
	\end{equation*}		Then, by Proposition  \ref{prop: basis selmer V=V1+V2} and \eqref{eq: H1fpm and H1spm}
	%, and by \eqref{eq: identi} 
		\begin{equation}\label{eq: prob2}
			\Psi_*\pi_{\alpha\beta}\res_p(\xi^{\beta\beta})= Q_-\otimes(v_1^--v_2^-)=Q_-\otimes v_1^--Q_-\otimes v_2^-
	\end{equation}
	in
	\begin{equation*}
	(\coh^1_f(K_p,V_f)\otimes( V_{\psi_1}^-\oplus V_{\psi_2}^-))^{G_{\Q_p}}\cong  \coh^1_f(K_p,V_1^-)\oplus \coh^1_f(K_p,V_2^-) .
	\end{equation*}
	On the other hand, by definition of the basis $\xi_i^\pm$ and using \eqref{eq: H1fpm and H1spm},
	\begin{equation*}
	\pi_1^-\res_p(\xi_1^+)-\pi_2^-\res_p(\xi_2^+)\in \coh_f^1(\Q_p,V_1^-)\oplus \coh_f^1(\Q_p,V_2^-)\cong(\coh^1_f(K_p,V_f)\otimes( V_{\psi_1}^-\oplus V_{\psi_2}^-))^{G_{\Q_p}}.
	\end{equation*}
	More precisely, we can write
	\begin{equation}\label{eq: prob1}
	\pi_1^-\res_p (\xi_1^+)-\pi_2^-\res_p(\xi_2^+)=P_1^-\otimes v_1^- - P_2^-\otimes v_2^-
	\end{equation}
	for points $P_{i}^-\in E(K_p)^-$.
	Hence, comparing \eqref{eq: prob1} and  \eqref{eq: prob2} we deduce that $Q_-= P_{1}^-=P_2^-$.
%Assume now that $\alpha_g\cdot\alpha_h=+1$. 	By Proposition \ref{prop: basis selmer V=V1+V2},  
%\begin{equation*}
%\Psi_*\res_p\xi^{\beta\beta}=\res_p\xi_1^-+\res_p\xi_2^-\in\coh^1(\Q_p,V_1)\oplus\coh^1(\Q_p,V_2)
%\end{equation*}
%Write
%\begin{equation*}
%\pi_{\alpha\beta}\res_p\xi^{\beta\beta}= Q_+\otimes v_g^\alpha\otimes v_h^\beta\in\coh^1_f(K_p,V_f)^+\otimes V_{gh}^{\alpha\beta}.
%\end{equation*}		Then, by Proposition  \ref{prop: basis selmer V=V1+V2}
%, and by \eqref{eq: identi}
%\begin{equation}\label{eq: prob2 2}
%\Psi_*\pi_{\alpha\beta}\res_p\xi^{\beta\beta}= Q_+\otimes(v_1^++v_2^+)=Q_+\otimes (v^+,v^+)
%\end{equation}
%in 
%\begin{equation*}
%(\coh^1_f(K_p,V_f)\otimes( V_1^+\oplus V_2^+))^{G_{\Q_p}}=\coh^1_f(\Q_p,V_1^+)\oplus\coh^1_f(\Q_p,V_2^+)=\left( \coh^1_f(K_p,V_f)^+\otimes\left( \Ind_{K_p}^{\Q_p}(1)^+  \right)^{\oplus2}\right)
%\end{equation*}
%On the other hand, by definition of the basis $\xi_i^\pm$,  
%\begin{equation*}
%\pi_1^+\res_p\xi_1^-+\pi_2^+\res_p\xi_2^-\in \coh_f^1(\Q_p,V_1^+)\oplus \coh_f^1(\Q_p,V_2^+)=\left( \coh^1_f(K_p,V_f)^+\otimes\Ind_{K_p}^{\Q_p}(1)^+ \right)^{\oplus2}.
%\end{equation*}
%More precisely, we can write
%\begin{equation}\label{eq: prob1 2}
%\pi_1^+\res\xi_1^-+\pi_2^+\res\xi_2^-=P_1^+\otimes v_1^+ + P_2^+\otimes v_2^+= (P_1\otimes v^+, P_2^+\otimes v^+)
%\end{equation}
%for points $P_{i}^+\in E(K_p)^+$.
%Comparing \eqref{eq: prob1 2} and  \eqref{eq: prob2 2} we reach the equalities
%\begin{equation*}
%Q_+= P_{1}^+=P_2^+.
%\end{equation*}
\end{proof}

\begin{theorem}\label{prop q no esta bien}
Let $P^\pm\in E(K_p)^\pm$ be the local points of Lemma \ref{lem: tutti uguali H1f}. Then
	\begin{equation*} 
	I_p(f,g_\alpha,h_\alpha) = \dfrac{\sqrt{c}\cdot 2(1-1/p)^2\sqrt{L(E\otimes\rho,1)}}{\pi^2\langle f, f\rangle} \times\dfrac{1}{\L_{g_\alpha}}\times \log_p(P^a) \ \mod L^\times.
	%\begin{cases}
	%\log_p(P^-) & \text{ if } \alpha_g\cdot\alpha_h=+1,\\
	%{\log_p(P^+)} & {\text{ if } \alpha_g\cdot\alpha_h=-1}.
%	\end{cases}
	\end{equation*} 
\end{theorem}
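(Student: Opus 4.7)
The theorem is obtained by combining the formula of Theorem~\ref{thm: main1} with the identification of local points provided by Lemma~\ref{lem: tutti uguali H1f}. Concretely, Theorem~\ref{thm: main1} already gives
$$I_p(f,g_\alpha, h_\alpha) \equiv \dfrac{2(1-1/p)^2\sqrt{c}\,\sqrt{L(E\otimes\rho,1)}}{\pi^2\langle f, f\rangle}\times \dfrac{\log_p(P_{\alpha\beta})}{\L_{g_\alpha}} \pmod{L^\times},$$
so the only remaining task is to identify the local point $P_{\alpha\beta} \in E(K_p)^{\alpha\beta}_{L_p}$ defined in \eqref{defPab} with the point $P^a$ from Lemma~\ref{lem: tutti uguali H1f}.

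First I would record that $P_{\alpha\beta}$ lies in $E(K_p)^a$. Indeed, by \eqref{defPab} the class $\pi_{\alpha\beta}\res_p(\xi^{\beta\beta})$ is a $G_{\Q_p}$-invariant element of $\coh^1_f(K_p,V_f)\otimes V_{gh}^{\alpha\beta}$, and $\Frob_p$ acts on $V_{gh}^{\alpha\beta}$ as multiplication by $\alpha_g\beta_h$. Using the explicit relations \eqref{alfa e betas}, one checks in both cases $a=\pm 1$ that $\alpha_g\beta_h = a$, so Galois-invariance forces $\Frob_p$ to act as $a$ on the $V_f$-component, i.e. $P_{\alpha\beta} \in E(K_p)^a$.

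Next I would invoke Lemma~\ref{lem: tutti uguali H1f} directly: its content is precisely that in both cases
$$\pi_{\alpha\beta}\res_p(\xi^{\beta\beta}) \;=\; \delta_p(P^a)\otimes v_g^\alpha\otimes v_h^\beta.$$
Comparing this with \eqref{defPab} and using the injectivity of Kummer's map $\delta_p$ yields $P_{\alpha\beta} = P^a$ in $E(K_p)^a \otimes L_p$. Substituting $P_{\alpha\beta} = P^a$ into the expression furnished by Theorem~\ref{thm: main1} produces the desired formula.

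There is no genuine obstacle in this concluding step: the substantive work has been carried out in the preceding sections. The real content sits in Lemma~\ref{lem: tutti uguali H1f}, which rests on the decomposition $V \cong V_1\oplus V_2$ coming from the factorisation $V_g\otimes V_h \cong V_{\psi_1}\oplus V_{\psi_2}$, together with the compatibility of the basis $\{\xi^{\alpha\alpha},\xi^{\alpha\beta},\xi^{\beta\alpha},\xi^{\beta\beta}\}$ of $\Sel_{(p)}(V)$ with the basis $\{\xi_i^\pm\}$ of $\Sel_{(p)}(V_i)$ recorded in Proposition~\ref{prop: basis selmer V=V1+V2}. In effect, Theorem~\ref{prop q no esta bien} is a repackaging of Theorem~\ref{thm: main1} in which the auxiliary weight-one eigenvectors $v_g^\alpha, v_h^\beta$ have been absorbed into the choice of Frobenius eigenspace on $E(K_p)$, leaving an intrinsically elliptic-curve-theoretic formula suitable for the comparison with Bertolini--Darmon's Kolyvagin classes carried out in the next corollary.
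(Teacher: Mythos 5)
Your proposal is correct and follows essentially the same path as the paper's own (very terse) proof: apply Theorem~\ref{thm: main1}, then use \eqref{defPab} together with Lemma~\ref{lem: tutti uguali H1f} to identify $P_{\alpha\beta}$ with $P^a$ inside $E(K_p)^a\otimes L_p$. Your extra observation that $\alpha_g\beta_h = a$ in both cases, so that Galois invariance in \eqref{defPab} already forces $P_{\alpha\beta}\in E(K_p)^a$, is a helpful clarification but not a different route; the paper's additional reference to \eqref{logab} concerns the Bloch--Kato logarithm that was already unwound inside the proof of Theorem~\ref{thm: main1}, so it does not change the argument.
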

\begin{proof} Combine Theorem \ref{thm: main1} with \eqref{defPab} and Lemma \ref{lem: tutti uguali H1f}  and  use the relation between the Bloch--Kato logarithms given in \eqref{logab}. 
\end{proof}

\color{black}

\subsection{Kolyvagin classes}\label{sec: kolyvagin classes}
In this section we recall briefly the construction and main properties of the Kolyvagin classes defined in  \cite[\S6]{BD97}. 

 Assume for simplicity that  $\cO_K^\times=\{\pm1 \}$. Let $H$ be a ring class field of $K$ of conductor prime to $p$, and recall the fields $H\subseteq F_{m-1}\subseteq H(p^m)$ defined in \S\ref{sec: tate module of E and group of connected component}. For each $m$,  fix a generator $\sigma_m$ of $G_m:=\Gal(F_m/H)$ such that the image of $\sigma_m$ via the projection  
\begin{equation}\label{eq: projection Gn Gn-1} 
G_m\longrightarrow G_m/\Gal(F_m/F_{m-1})\cong G_{m-1}
\end{equation}
equals $\sigma_{m-1}$. Moreover, the elements $\{\sigma_{m}\}_m$ can be chosen so that $\sigma_{\anti}=(\sigma_{m|G_{F_{m,\fp}}})_m$, where $\sigma_{\anti}$ is the generator of $\Gamma_{\anti}$  we fixed in \S\ref{sec: the GKp cohomology of E}. 
In \cite{BD96}, Bertolini and Darmon constructed a collection of points $\{\tilde\alpha_m\in E(H(p^m))\}_m$ (in the notation of \cite{BD96}, $\tilde\alpha_m=\alpha_m(1)$) such that 
\begin{equation}\label{eq: norm alphas}
	\norm_{H(p^{m})/H}\tilde{\alpha}_m=0.
\end{equation}
For each $m\geq1$, let $$\alpha_{m}:=\norm_{H(p^{m+1})/F_{n}}\tilde\alpha_{m+1}\in E(F_{m}).$$
To each point $\alpha_{m}$, in \cite{BD97} the authors attached a Kolyvagin class $\mathbf{K}_m\in\coh^1(H,E[p^{m}])$ in the following way.
The \textit{Kolyvagin derivative operator} is the element 
$D_{m}:=\sum_{i=1}^{p^{m}-1}i\sigma_{m}^i\in\Z[G_m].$ 
\begin{lemma}%\footnote{F a V: este lemma no lo citamos pero citamos a la equacion de lo statement}
	The Kolyvagin derivative satisfies the following equality in $\Z[G_m]$ 
	\begin{equation}\label{eq: equation for kol derivative}
	(\sigma_{m}-1) D_{m}=p^{m}-\norm_{F_{m}/H}.
	\end{equation}
\end{lemma}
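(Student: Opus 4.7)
The plan is a direct telescoping computation in the group ring $\Z[G_m]$. Since $G_m$ is cyclic of order $p^m$ generated by $\sigma_m$, one has the fundamental relation $\sigma_m^{p^m}=1$, and by definition $\norm_{F_m/H}=\sum_{i=0}^{p^m-1}\sigma_m^i$. I would simply expand
\begin{equation*}
(\sigma_m-1)D_m \;=\; \sum_{i=1}^{p^m-1} i\,\sigma_m^{i+1} \;-\; \sum_{i=1}^{p^m-1} i\,\sigma_m^i,
\end{equation*}
reindex the first sum by setting $j=i+1$, and use $\sigma_m^{p^m}=1$ to peel off the resulting $j=p^m$ term as the constant $(p^m-1)\cdot 1$. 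The middle terms in $2\le j\le p^m-1$ then combine to $-\sum_{j=2}^{p^m-1}\sigma_m^j$ by the identity $(j-1)-j=-1$, and together with the $j=1$ term $-\sigma_m$ from the second sum one obtains $-\sum_{j=1}^{p^m-1}\sigma_m^j$. Adding back the constant $p^m-1$ gives
\begin{equation*}
(\sigma_m-1)D_m \;=\; p^m - \sum_{j=0}^{p^m-1}\sigma_m^j \;=\; p^m - \norm_{F_m/H},
\end{equation*}
which is the asserted equality.

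There is really no obstacle here: this is the classical telescoping identity underlying the construction of Kolyvagin derivative operators, and the argument takes place entirely at the level of the integral group ring of a finite cyclic group. The only point requiring a small amount of care is the bookkeeping at the boundary of the index shift, where the term $\sigma_m^{p^m}$ collapses to $1$; the apparent mismatch between $p^m-1$ and $p^m$ is resolved precisely because the sum defining $D_m$ omits the $i=0$ term, so that adding the missing $\sigma_m^0=1$ on both sides upgrades $\sum_{j=1}^{p^m-1}\sigma_m^j$ to the full norm $\norm_{F_m/H}$.
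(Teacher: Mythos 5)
Your proposal is correct, and the direct expand-and-telescope computation you give is complete and airtight. It is, however, organized differently from the paper's proof. The paper first rewrites the derivative as $D_m = -\sum_{i=1}^{p^m}\frac{\sigma_m^i-1}{\sigma_m-1}$, where $\frac{\sigma^i-1}{\sigma-1}$ is shorthand for the partial geometric sum $1+\sigma+\cdots+\sigma^{i-1}$, and then multiplies through by $\sigma_m-1$ so that each summand collapses to $\sigma_m^i-1$, yielding $-\norm_{F_m/H}+p^m$ at once. Strictly speaking the paper's intermediate identity holds only up to a multiple of $\norm_{F_m/H}$ — a short computation shows $-\sum_{i=1}^{p^m}(1+\sigma+\cdots+\sigma^{i-1})=D_m-p^m\norm_{F_m/H}$ — but since $(\sigma_m-1)\norm_{F_m/H}=0$, this discrepancy is annihilated in the very next line, so the conclusion is unaffected. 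Your route works entirely on the level of the raw sums, with the single boundary term $\sigma_m^{p^m}=1$ being folded back in at the end; it is marginally longer to write out but avoids introducing the auxiliary geometric-series reformulation and the accompanying imprecision. Both arguments are, at bottom, the same telescoping identity.
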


\begin{proof}
	Note that  $ D_{n}=-\sum_{i=1}^{p^{n}}\dfrac{{\sigma}_{n}^i-1}{{\sigma}_{n}-1}.$
	Then $({\sigma}_{n}-1 )D_{n}=-\sum_{i=1}^{p^{n}}({\sigma}_{n}-1)=-\norm_{F_{n}/H}+p^{n}.$
\end{proof}
\color{black}
Define $P_m:=D_m\alpha_m\in E(F_m)$.

\begin{lemma}\label{lem: the class Dnalphan is fixed by Gn}
	\begin{enumerate}
		\item The point $P_m\in E(F_m)$ only depends on the choice of the generator $\sigma_m$ of $G_m$ up to the multiplication by an element $a_m\in \{1,\dots,p^m-1\}$. Moreover, if we choose the elements $\{\sigma_m\}_m$ to be compatible in the sense of \eqref{eq: projection Gn Gn-1}, then $a_{m-1}\equiv a_m \mod p^{m-1}$, i.e.\ $(a_m)_m\in\varprojlim_m(\Z/p^m\Z)^\times=\Z_p^\times$.
		\item The class $[P_m]$ of $P_m$ in $E(F_m)/p^mE(F_m)$ is fixed by $G_m=\Gal(F_m/H)$.
	\end{enumerate}
\end{lemma}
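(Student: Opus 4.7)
The plan is to derive both assertions directly from the key identity \eqref{eq: equation for kol derivative} combined with the norm-compatibility relation \eqref{eq: norm alphas}. The overall strategy is that all calculations take place in the group ring $\Z[G_m]$ acting on $E(F_m)$, and we only need to control things modulo $p^m$.

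For part (1), I would replace $\sigma_m$ by $\sigma_m':=\sigma_m^{a_m}$ for some unit $a_m\in(\Z/p^m\Z)^\times$ and study the resulting operator $D_m':=\sum_{i=1}^{p^m-1} i(\sigma_m')^i$. Reindexing the sum via $j\equiv a_m i\pmod{p^m}$ would show that $D_m'\equiv a_m^{-1}D_m$ in $\Z[G_m]/p^m\Z[G_m]$. Applying both sides to $\alpha_m\in E(F_m)$ yields $D_m'\alpha_m\equiv a_m^{-1}D_m\alpha_m\pmod{p^m E(F_m)}$, which is exactly the assertion that the class of $P_m$ is modified by a unit scalar in $(\Z/p^m\Z)^\times$. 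The coherence statement is then automatic: under the projection \eqref{eq: projection Gn Gn-1}, the image of $\sigma_m^{a_m}$ is $\sigma_{m-1}^{a_m\bmod p^{m-1}}$, so the substitution performed at level $m-1$ is precisely by the reduction of $a_m$ modulo $p^{m-1}$.

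For part (2), since $G_m$ is cyclic generated by $\sigma_m$, it suffices to show that $(\sigma_m-1)P_m\in p^m E(F_m)$. By \eqref{eq: equation for kol derivative},
\begin{equation*}
(\sigma_m-1)P_m=(\sigma_m-1)D_m\alpha_m=p^m\alpha_m-\norm_{F_m/H}\alpha_m.
\end{equation*}
Unwinding the definition $\alpha_m=\norm_{H(p^{m+1})/F_m}\tilde\alpha_{m+1}$ and using transitivity of norms gives $\norm_{F_m/H}\alpha_m=\norm_{H(p^{m+1})/H}\tilde\alpha_{m+1}$, which vanishes by \eqref{eq: norm alphas} applied with $m$ replaced by $m+1$. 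Hence $(\sigma_m-1)P_m=p^m\alpha_m\in p^m E(F_m)$, as required.

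The hard part is essentially nil; the only point requiring care is in part (1), where one must choose an integer lift of $a_m^{-1}\in(\Z/p^m\Z)^\times$ in order to compare $D_m'$ and $a_m^{-1}D_m$ as actual elements of $\Z[G_m]$ rather than as classes modulo $p^m$.
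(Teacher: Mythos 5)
Your proof is correct and follows essentially the same path as the paper's: reindexing the Kolyvagin derivative modulo $p^m$ for part (1), and combining the operator identity with the norm relation $\norm_{H(p^{m+1})/H}\tilde\alpha_{m+1}=0$ to get $(\sigma_m-1)P_m=p^m\alpha_m$ for part (2). You are in fact a bit more careful than the paper itself on the point you flag (the need to lift $a_m^{-1}$ to $\Z$ so that $D_m'\equiv a_m^{-1}D_m$ is an honest congruence in $\Z[G_m]/p^m\Z[G_m]$), and you make explicit the transitivity-of-norms step that the paper leaves implicit.
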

\begin{proof}
	\begin{enumerate}
		\item Let $\sigma'_m$ any generator of $G_m\cong\Z/p^m\Z$ and define $D'_m:=\sum i(\sigma'_m)^i$. Then there exists an element $\overline{a_m}\in(\Z/p^m\Z)^\times$ such that $\sigma'_m=\sigma_m^{\overline{a_m}}$, where $a_m\in \{1,\dots,p^m-1\}$. Then $D'_m=\sum i\sigma_n^{\overline{a_m}i}=a_m^{-1}D_m.$
		\item It suffices to prove that the class of $D_m\alpha_m$ is fixed by the generator ${\sigma}_{m}$ of $G_m$, i.e.\ that 
	${\sigma}_{n} D_{n}{\alpha}_{n}- D_{n}{\alpha}_{n}\in p^{n}E(F_{n}).$
	Combining \eqref{eq: equation for kol derivative} and \eqref{eq: norm alphas} we obtain
		 $({\sigma}_{m}-1) D_{m}{\alpha}_{m}=p^{m}{\alpha}_{m}-\norm_{F_{m}/H}{\alpha}_{m}=p^m\alpha_m.$
	\end{enumerate}
	
\end{proof}

The inflation-restriction and Kummer exact sequences yield the following  commutative diagram
\begin{equation*}\label{eq: diagram ring class fields BD}
\xymatrix{
	& &\coh^1(F_{m}/H,E(F_m)[p^{m}]) \ar[d]^{\text{Inf}} &  & \\
	0 \ar[r] & \Big(E(H)/p^{m} E(H)\Big) \ar[r]^{\ \ \delta_p}\ar[d]^{\text{Res}} & \coh^1(H, E[p^{m}])  \ar[d]^{\text{Res}}\ar[r] & \coh^1(H,E)[p^{m}]\ar[r]  \ar[r]\ar[d]^{\text{Res}} & 0 \\
	0 \ar[r] & \Big(E(F_{m})/p^{m} E(F_{m})\Big)^{G_m} \ar[r]^{\ \ \delta_p} & \coh^1(F_{m},E[p^{m}])^{G_m} \ar[r]\ar[d] &\coh^1(F_{m},E)[p^{m}]^{G_m} & \\
	& & \coh^2(F_{m}/H,E(F_m)[p^{m}]). & & }
\end{equation*}
whose rows and columns are exact.  Let $C$ be a constant annihilating $E(F_{m})[p^{m}]$, which can be chosen  to be independent of $m$ by  \cite[Lemma 6.3]{BD97}.
By Lemma \ref{lem: the class Dnalphan is fixed by Gn}, we can consider the element $[P_m]\in(E(F_m)/p^mE(F_m))^{G_m}$, and we define
\begin{equation*}
\mathbf{K}_m:=C\mathbf{K}^{\circ}_m\in\coh^1(H,E[p^m]),
\end{equation*}
 where $\mathbf{K}^{\circ}_m$ is a preimage via $\Res$ of $-C\delta_p[P_m]$.
For each $m$, denote
\begin{equation*}
	\cG_m:=\Gal(F_m/K)\supseteq G_m:=\Gal(F_m/H)
\end{equation*}  
and fix a complete set $S=S_m=\{\sigma_1^{(m)},\dots,\sigma_h^{(m)}\}$ of representatives for the $G_m$-cosets in $\cG_m$. We require these sets to satisfy the following compatibility condition.
Let $\pi_m:\cG_m\longrightarrow\cG_{m-1}$ denote the projection induced by the inclusion $F_{m-1}\subseteq F_m$. Then 
\begin{equation*}\label{eq: compatibility of Sn}
\pi_m(\sigma^{(m)}_i)=\sigma_i^{(m-1)}
\end{equation*} for all $i$ and all $m$. In other words, under this compatibility condition we can consider a complete set $\tilde{S}=\{ \tilde{\sigma}_1,\dots,\tilde{\sigma}_m \}$ of representatives of $G_H$-cosets in $G_K$ such that, under the projection $\tilde{\pi}_m:G_K\longrightarrow\Gal(F_m/K)$ we have 
\begin{equation*}\label{eq: lift of S}
\tilde\pi_m(\tilde\sigma_i)=\sigma_i^{(m)}.
\end{equation*}

\begin{lemma}\label{lem: Cpsi} %\footnote{F a V: este lemma no lo citamos pero permite la definicion de $\mathbf{K}$}
	The classes $\mathbf{K}_m$ form a compatible system
	\begin{equation*}
		\mathbf{K}:=(\mathbf{K}_m)_m\in\varprojlim_m \coh^1(H,E[p^m])=\coh^1(H,T_pE) \subset \coh^1(H,V_f).
	\end{equation*}
%	and
%	\begin{equation*}
%		C^\phi:=(C^\phi_m)_m\in\varprojlim_m \coh^1(K,E[p^m]\otimes\phi)=\coh^1(K,T_pE\otimes\phi)\hookrightarrow\coh^1(K,V_f\otimes\phi).
%	\end{equation*}
\end{lemma}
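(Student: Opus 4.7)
The plan is to verify that under the natural maps
$$\pi_{m-1}^m : \coh^1(H,E[p^m]) \lra \coh^1(H,E[p^{m-1}])$$
induced by multiplication-by-$p$ on torsion, the classes $\mathbf{K}_m$ form a compatible sequence. My approach is to exploit the commutative diagram of inflation-restriction sequences at successive levels and reduce the question to a compatibility statement for the points $P_m=D_m\alpha_m \in (E(F_m)/p^mE(F_m))^{G_m}$. Concretely, since $\mathbf{K}_m = C\mathbf{K}_m^{\circ}$ is characterized by $\Res(\mathbf{K}_m^{\circ}) = -\delta_p[P_m]$ modulo the image of $\Inf$ from $\coh^1(F_m/H,E(F_m)[p^m])$, it is enough to show that $\norm_{F_m/F_{m-1}}[P_m] \equiv [P_{m-1}] \pmod{p^{m-1}}$ (up to a $p$-adic unit absorbed into the choice of $\sigma_m$, as permitted by Lemma \ref{lem: the class Dnalphan is fixed by Gn}(1)), together with the corresponding statement on the inflation kernels.

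The two ingredients I would use are: (i) the Euler-system-type norm relation for the Heegner points, derived from $\norm_{H(p^m)/H}\tilde\alpha_m = 0$ together with the definition $\alpha_m = \norm_{H(p^{m+1})/F_m}\tilde\alpha_{m+1}$, which after factoring through the tower $H \subset F_{m-1} \subset F_m \subset H(p^{m+1})$ yields a clean expression for $\norm_{F_m/F_{m-1}}\alpha_m$; and (ii) the identity $(\sigma_m-1)D_m = p^m - \norm_{F_m/H}$, combined with the compatibility $\sigma_m \mapsto \sigma_{m-1}$ under the projection $G_m \twoheadrightarrow G_{m-1}$. Applying $D_m$ to the norm relation and reducing modulo $p^{m-1}$ (where the term $p^m$ disappears and $\norm_{F_m/H}\alpha_m = 0$ by the tower compatibility of the $\tilde\alpha$'s) shows that $\norm_{F_m/F_{m-1}}D_m\alpha_m$ differs from $D_{m-1}\alpha_{m-1}$ by a class vanishing modulo $p^{m-1}$, which is exactly what is needed.

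The main obstacle is the inflation-kernel ambiguity: the lift $\mathbf{K}_m^{\circ}$ is only unique up to $\Inf(\coh^1(F_m/H,E(F_m)[p^m]))$, and this indeterminacy must be controlled uniformly in $m$ to obtain a coherent inverse system. This is precisely where the constant $C$ annihilating $E(F_m)[p^\infty]^{G_m}$ (shown to be independent of $m$ in \cite[Lemma 6.3]{BD97}) enters: multiplying by $C$ kills the inflation kernel, rendering $\mathbf{K}_m = C\mathbf{K}_m^{\circ}$ genuinely well-defined and the system $(\mathbf{K}_m)_m$ compatible. Once compatibility is established, the identification $\varprojlim_m\coh^1(H,E[p^m]) = \coh^1(H,T_pE)$ holds by the vanishing of $\varprojlim^1$ (the relevant groups being finite in each degree, as $H$ is a number field and $E[p^m]$ is finite), and the inclusion $\coh^1(H,T_pE) \hookrightarrow \coh^1(H,V_f)$ follows by tensoring with $\Q_p$.
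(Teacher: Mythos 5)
There is a genuine gap in the norm computation at the heart of your argument. You reduce the compatibility to the claim that $\norm_{F_m/F_{m-1}}[P_m] \equiv [P_{m-1}] \pmod{p^{m-1}}$, but this is false. Writing $\overline{D_m}$ for the image of $D_m$ under $\Z[G_m]\to\Z[G_{m-1}]$ and grouping $i=j+kp^{m-1}$, one finds
\begin{equation*}
\overline{D_m} \; = \; p\,D_{m-1} \; + \; \tfrac{p^m(p-1)}{2}\,\norm_{F_{m-1}/H},
\end{equation*}
so using $\norm_{F_{m-1}/H}\alpha_{m-1}=0$ and the fact that $\norm_{F_m/F_{m-1}}$ commutes with $D_m$ in the commutative ring $\Z[G_m]$,
\begin{equation*}
\norm_{F_m/F_{m-1}}(D_m\alpha_m) \; = \; \overline{D_m}\,\alpha_{m-1} \; = \; p\,D_{m-1}\alpha_{m-1}.
\end{equation*}
Thus the norm produces $p\,P_{m-1}$, not $P_{m-1}$, and $p\,P_{m-1}\equiv P_{m-1}\pmod{p^{m-1}}$ would force $(p-1)P_{m-1}\in p^{m-1}E(F_{m-1})$, which there is no reason to expect. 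The discrepancy cannot be absorbed "into the choice of $\sigma_m$" as you suggest: Lemma \ref{lem: the class Dnalphan is fixed by Gn}(1) only permits rescaling $D_m$ by elements of $(\Z/p^m\Z)^\times$, and $p$ is not a unit; moreover the $\sigma_m$'s are pinned down by the compatibility \eqref{eq: projection Gn Gn-1}.

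The fix is to avoid corestriction altogether and compare everything after restricting \emph{up} to $F_m$ rather than norming \emph{down} to $F_{m-1}$. One shows directly that, as elements of $E(F_m)$ (with $D_{m-1}\alpha_{m-1}\in E(F_{m-1})\subseteq E(F_m)$),
\begin{equation*}
D_m\alpha_m \; - \; D_{m-1}\alpha_{m-1} \; = \; p^{m-1}\sum_{k=1}^{p^{m}-1}\Big\lfloor \tfrac{k}{p^{m-1}}\Big\rfloor\,\sigma_m^k\alpha_m \; \in \; p^{m-1}E(F_m),
\end{equation*}
obtained by expanding $\alpha_{m-1}=\norm_{F_m/F_{m-1}}\alpha_m$ inside $D_{m-1}\alpha_{m-1}$ and matching coefficients of $\sigma_m^k\alpha_m$; the two operators agree modulo $p^{m-1}\Z[G_m]$ on the nose (no auxiliary use of $\norm_{F_m/H}\alpha_m=0$ is needed here). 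One then compares $\Res_{F_m}(p_*\mathbf{K}^\circ_m)$ with $\Res_{F_m}\mathbf{K}^\circ_{m-1}$ in $\coh^1(F_m,E[p^{m-1}])$: both equal $-\delta_{p,m-1}$ of the corresponding class in $E(F_m)/p^{m-1}E(F_m)$, which agree by the congruence above, so the difference $p_*\mathbf{K}^\circ_m - \mathbf{K}^\circ_{m-1}$ lies in the inflation kernel and is killed by $C$. Your framing of the inflation-kernel issue, the role of $C$, and the final $\varprojlim$ identification are all fine; the problem is specifically the corestriction step.
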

\begin{proof}
	The norm map $\norm_{F_{m+1}/F_m}:F_{m+1}\longrightarrow F_m$ induces a map
	$\norm_{F_{m+1}/F_m}:E(F_{m+1})\longrightarrow E(F_m).$
	By \cite[\S 2.45]{BD96}, we have
	\begin{equation}\label{eq: norm compatibility of alphan}
	\norm_{F_{m+1}/F_m}(\alpha_{m+1})=\alpha_m.
	\end{equation}
	Consider the composition
	\begin{equation*}
	f_{m+1}: E(F_{m+1})/p^{m+1}E(F_{m+1}) \overset{\norm_{F_{m+1}/F_m}}{\longrightarrow} E(F_m)/p^{m+1}E(F_m) \longrightarrow E(F_m)/p^mE(F_m)
	\end{equation*}
	where the second morphism is the projection given by the inclusion $p^{m+1}E(F_m)\subseteq p^mE(F_m)$.
	Hence \eqref{eq: norm compatibility of alphan} implies that 
	\begin{equation}\label{eq: compatibility alphan mod pn}
	f_{m+1}([\alpha_{m+1}] )= [\alpha_m].
	\end{equation}
	Consider the following commutative diagram
	\begin{equation*}
	\xymatrix{E(F_{m+1})/p^{m+1}E(F_{m+1}) \ar[r]^{\delta_p}\ar[d]^{f_{m+1}} & \coh^1(F_{m+1},E[p^{m+1}])\ar[d]^{f_{m+1,*}}\\
		E(F_{m})/p^{m}E(F_{m}) \ar[r]^{\delta_p}& \coh^1(F_{m},E[p^{m}]).}
	\end{equation*}
	Here $f_{m+1,*}$ is the map induced in cohomology by $f_{m+1}$, i.e.\ it is the composition 
	\begin{equation*}
	\coh^1(F_{m+1},E[p^{m+1}])\overset{\norm_{F_{m+1}/F_m}}{\longrightarrow}\coh^1(F_{m},E[p^{m+1}])\overset{p_*}{\longrightarrow}\coh^1(F_{m},E[p^{m}]),
	\end{equation*}
	where the last map is the composition with the multiplication by $p$ map $E[p^{m+1}]\overset{p}{\longrightarrow}E[p^m]$.
	Then \eqref{eq: compatibility alphan mod pn} implies that 
	\begin{equation*}
	f_{m+1,*}\delta_p[ D_{m+1}\alpha_{m+1}] = \delta_p[D_m\alpha_m].
	\end{equation*}
%	so we have the first part of the statement. The second equality then follows by applying the corefion maps.
\end{proof}

Recall the groups  $\Phi_{m,\fp}$ and $\Phi_{\infty,\fp}$ defined in \S\ref{sec: tate module of E and group of connected component}.
By Lemma \ref{lemma: 2} and Proposition \ref{prop:1}, there is an injection 
\begin{equation*}
	\hom_{\cont}(\Gamma_{\anti},\Phi_{\infty,\fp}\otimes\Q_p)\subseteq\coh^1_s(K_p,V_f).
\end{equation*}
Recall the choice $\sigma_m$ of generator of $G_m$; if we still denote $\sigma_m$ its restriction to $\Gal(F_{m,\fp}/H_\fp)=\Gal(F_{m,\fp}/K_p)$, then $\Gamma_{\anti}$ is generated by $\sigma_{\anti}:=(\sigma_m)_m$.

 Recall the map $\partial_p: \coh^1(H,V_f) \ra \coh^1_s(K_p,V_f)$ onto the singular quotient.

\begin{proposition}\label{prop: C}
	The class $\mathbf{K}$ lies in $\Sel_{(p)}(H,V_f)$ and $\partial_p  \mathbf{K}\in 	\hom_{\cont}(\Gamma_{\anti},\Phi_{\infty,\fp}\otimes\Q_p)$. Moreover, if  $\bar{\alpha}_m$ denotes the image of $\alpha_m$ in $\Phi_{m,\fp}$ and  $\bar{\alpha}:=(\bar{\alpha}_m)_m\in\Phi_{\infty,\fp}$, then
	\begin{equation*}
	\partial_p \mathbf{K}(\sigma_\anti) = \bar{\alpha}.
	\end{equation*}
\end{proposition}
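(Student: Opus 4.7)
The plan is to verify the three assertions in sequence. The first—that $\mathbf{K}$ lies in $\Sel_{(p)}(H,V_f)$—is a standard feature of Kolyvagin-type classes and can be argued as in \cite[\S6]{BD97}: since the tower $\{F_m\}_m$ sits inside the anticyclotomic $\Z_p$-extension of $H$ and is therefore unramified at every prime $\lambda\nmid p$ of $H$, and since $\Res(\mathbf{K}_m)=-C\delta_p[P_m]$ with $P_m\in E(F_m)$, a direct application of inflation--restriction identifies $\res_\lambda(\mathbf{K}_m)$ with an unramified class at $\lambda$. Passing to the limit in $m$ yields $\res_\lambda(\mathbf{K})\in\coh^1_f(H_\lambda,V_f)$ for every such $\lambda$.

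The bulk of the proof concerns the local behaviour at $\fp$. I would pick $\beta_m\in E(\bar{K}_p)$ with $p^m\beta_m=P_m$ and extract from the defining identity $(\sigma_m-1)D_m=p^m-\norm_{F_m/H}$, combined with $\norm_{F_m/H}\alpha_m=0$, the relation $\sigma P_m-P_m=p^m\gamma_\sigma$ for $\sigma\in G_{K_p}$ with $\sigma|_{F_m}=\sigma_m^j$ and $\gamma_\sigma:=\sum_{i=0}^{j-1}\sigma_m^i\alpha_m\in E(F_{m,\fp_m})$. One then checks that the formula $c_m(\sigma):=\sigma\beta_m-\beta_m-\gamma_\sigma$ defines a 1-cocycle $G_{K_p}\to E[p^m]$ whose class represents $-\res_p(\mathbf{K}_m^\circ)$; the cocycle identity in the range $j+k\geq p^m$ uses precisely $\norm_{F_m/H}\alpha_m=0$, and the Kolyvagin constant $C$ is reinstated at the end.

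The heart of the argument is to project $c_m$ to the quotient $T^-/p^mT^-\cong\Phi_{m,\fp}$ via Tate's uniformisation and evaluate it on the topological generators $\sigma_\cycl,\sigma_\anti$ of the inertia of $G_{K_p}^{\ab}$. Writing $x_{\alpha_m},x_{P_m}\in F_{m,\fp_m}^\times/q_E^{\Z}$ for the Tate coordinates, one has $x_{P_m}=\prod_i\sigma_m^i(x_{\alpha_m})^i$, and $\norm_{F_m/H}\alpha_m=0$ in $E(H_\fp)$ forces $\norm_{F_m/H}(x_{\alpha_m})=q_E^e$ for a unique $e\in\Z$ whose reduction modulo $p^m$ recovers $\bar{\alpha}_m$ under the isomorphism of Lemma \ref{lem: Phi infty and limit of bar Phi1 + Phi infty and limit of bar Phi2 + rem: 2} (here the assumption $p\nmid\ord_p(q_E)$ is essential). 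For $\sigma$ with $\sigma|_{F_m}=\sigma_m$, a direct computation in $\bar{K}_p^\times/q_E^{\Z}$ yields $c_m(\sigma)\equiv q_E^{e/p^m}$ modulo $\mu_{p^m}$, which projects to $\bar{\alpha}_m$ in $T^-/p^mT^-$. For $\sigma=\sigma_\cycl$ instead $\sigma|_{F_m}=1$, so $\gamma_\sigma=0$ and $c_m(\sigma)=\sigma\beta_m-\beta_m\in\mu_{p^m}$ has vanishing projection to $T^-/p^mT^-$. Taking inverse limits in $m$ and translating through Corollary \ref{cor: main appendix} gives the membership $\partial_p\mathbf{K}\in\hom_{\cont}(\Gamma_\anti,\Phi_{\infty,\fp}\otimes\Q_p)$ together with the equality $\partial_p\mathbf{K}(\sigma_\anti)=\bar{\alpha}$.

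The main obstacle I anticipate is bookkeeping. One must reconcile the sign and scalar ambiguities between $\mathbf{K}_m=C\mathbf{K}_m^\circ$ and the explicit cocycle $c_m$, while carefully tracking the choice of representative $x_{\alpha_m}$ modulo $q_E^{\Z}$, of $\beta_m$ modulo $E[p^m]$, and of the identifications $T^-\cong\Z_p(\chi_E)\cong\Phi_{\infty,\fp}$. In particular one must verify that the relation $\ord_{m,\fp_m}(x_{\alpha_m})=n\cdot\bar{\alpha}_m$ modulo $np^m$ dictated by $\norm_{F_m/H}(x_{\alpha_m})\in q_E^{\Z}$ transports through Lemma \ref{lem: Phi infty and limit of bar Phi1 + Phi infty and limit of bar Phi2 + rem: 2} to yield exactly $\bar{\alpha}$, with no stray unit scalar absorbed along the way.
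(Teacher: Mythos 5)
The paper's own ``proof'' of this proposition is simply a citation to \cite[Proposition~6.9, parts 1 and 2]{BD97}; no argument is reproduced in the text. Your proposal is therefore a genuine reconstruction of the argument behind that reference rather than a comparison with something written in the paper.

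Your outline is sound and follows the standard Bertolini--Darmon strategy. The unramifiedness at $\lambda\nmid p$ via the unramifiedness of the anticyclotomic tower together with inflation--restriction is the usual argument. The local analysis at $\fp$ is the real content: you extract the relation $\sigma P_m-P_m=p^m\gamma_\sigma$ from $(\sigma_m-1)D_m=p^m-\norm_{F_m/H}$ and $\norm_{F_m/H}\alpha_m=0$, write down the explicit cocycle $c_m(\sigma)=\sigma\beta_m-\beta_m-\gamma_\sigma$ representing $\pm\res_p(\mathbf K_m^\circ)$ (the cocycle identity in the wrap-around range $j+k\geq p^m$ does use the norm relation, as you say), project to $T^-/p^mT^-\cong\Phi_{m,\fp}$ via Tate coordinates, and then evaluate on $\sigma_\cycl$ (where it dies because $\sigma_\cycl$ fixes $F_{m,\fp}$, so $\sigma_\cycl\beta_m-\beta_m$ lands in $\mu_{p^m}$) and on $\sigma_\anti$ (where the valuation bookkeeping against $\norm_{F_m/H}(x_{\alpha_m})\in q_E^\Z$ recovers $\bar\alpha_m$). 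This is precisely the computation that \cite[\S6]{BD97} carries out, so you have rediscovered the correct approach.

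One point you flag but do not close, and which is genuinely nontrivial, is the normalization of the identification $T^-\cong\Phi_{\infty,\fp}$. The valuation computation gives $\ord_{m,\fp}(x_{\alpha_m})\equiv n\cdot e \pmod{np^m}$ with $n:=\ord_p(q_E)$, while the cocycle value $c_m(\sigma_\anti)$ is a $p^m$-th root of $q_E^{\pm e}$, and passing between $\Phi_{m,\fp}$ (the $p$-Sylow of $\Z/np^m\Z$ via $\ord_{m,\fp}$ and CRT) and $T^-/p^mT^-$ (via $\bar q\leftrightarrow 1$) involves a choice that can introduce a unit factor $n\in\Z_p^\times$; the Kolyvagin constant $C$ and the sign in $\mathbf K_m=C\mathbf K_m^\circ$ with $\Res(\mathbf K_m^\circ)=-C\delta_p[P_m]$ are further sources of scalars. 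Since the conclusion is an equality in $\Phi_{\infty,\fp}$ --- not an equality up to $L^\times$ --- these must be pinned down rather than waved away. It is reassuring that a factor $\ord_p(q_E)/p$ does appear downstream in Proposition~\ref{prop: exp partialp C}, indicating that the paper's normalization choices were tuned to make the present proposition come out on the nose; but to turn your sketch into a proof one should track exactly which identification $\Phi_{m,\fp}\cong\Z/p^m\Z$ is being used (the subgroup $n\Z/np^m\Z$ versus the quotient $\Z/p^m\Z$) and verify it matches the one implicit in Lemma~\ref{lem: Phi infty and limit of bar Phi1 + Phi infty and limit of bar Phi2 + rem: 2} and in \cite{BD97}.

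Finally, one small remark: you invoke Corollary~\ref{cor: main appendix} at the end, but it is not really needed. The membership $\partial_p\mathbf K\in\hom_{\cont}(\Gamma_\anti,\Phi_{\infty,\fp}\otimes\Q_p)$ already follows from Lemma~\ref{lemma: 2}, Proposition~\ref{prop:1} and Lemma~\ref{lem: Phi infty and limit of bar Phi1 + Phi infty and limit of bar Phi2 + rem: 2} once you have shown the $\Gamma_\cycl$-component vanishes; Corollary~\ref{cor: main appendix} concerns the dual exponential, which is what Proposition~\ref{prop: exp partialp C} needs afterwards, not what this proposition asserts.
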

\begin{proof}
\cite[Proposition 6.9, 1,2]{BD97}.
\end{proof}

\subsection{$I_p(f,g_\alpha,h_\alpha)$ and Kolyvagin classes}\label{sec: Lpg and kolyvagin classes}
Recall the ring class characters $\psi_1,\psi_2$ appearing in the decomposition \eqref{eq: decomposition psis}. Since they are ring class characters unramified at $p$, they factor through the Galois group $\Gal(H/K)$, where $H$ is a ring class field of conductor prime to $p$.  

Using the Kolyvagin classes described in  \S\ref{sec: kolyvagin classes}, in this section we define elements $\mathbf{K}^{\psi_i}$ in the relaxed Selmer groups  $\Sel_{(p)}(K, V_f\otimes\psi_i)$ for $i=1,2$. The aim of this section is to compare the classes $\mathbf{K}^{\psi_1}, \mathbf{K}^{\psi_2}$ with the local points in $E(K_p)^\pm$ appearing in Theorem \ref{prop q no esta bien}, in order to obtain a formula for the value $I_p(f,g_\alpha,h_\alpha)$ in terms of these Kolyvagin classes.
Using the notation of the previous sections, consider the dual exponential 
\begin{equation*} 
\exp^*_\pm:\coh^1_s(K_p,V_f)^\pm\longrightarrow\Q_p
\end{equation*}
of \eqref{eq: exp pm onto Qp}.
As explained in \S\ref{sec: ex appendix},   Tate's uniformisation induces the isomorphism  $\varphi:\Phi_{\infty,\fp}\overset{\cong}{\longrightarrow}\Z_p$
 (in the notation of Lemma \ref{lem: Phi infty and limit of bar Phi1 + Phi infty and limit of bar Phi2 + rem: 2}, $\varphi:=\overline{\varphi}_{\Tate}$ maps $\bar{q}$ to $1$). 
Recall the point  $\bar{\alpha}\in\Phi_{\infty,\fp}$ defined in Proposition \ref{prop: C}, the sign%\footnote{El signo $\alpha_f$ sale muchisimo a partir de aqui. Te parece buena idea denotarlo aqui $a:= a_p(E)=\alpha_f \in \{ \pm 1\}$ y utilizar simplemente la letra $a$ en los calculos? Yo mantendria $\alpha_f$ o $a_p(E)$ en los enunciados de los teoremas (pero no en los lemas o proposiciones).}  
$$a:=a_p(E)\in \{ \pm 1\}$$ and the period
\begin{equation*}
	\Pi_p:=\varphi(\bar{\alpha})\in\Z_p.
\end{equation*}

\begin{proposition}\label{prop: exp partialp C} We have
$$\partial_p  \mathbf{K}\in\coh^1_s(K_p,V_f)^{-a}\quad \mbox{ and } \quad \exp^*_{-a}\partial_p\mathbf{K} =\dfrac{\ord_p(q_E)}{p}\Pi_p.$$
In particular $$\res_p(\mathbf{K})^a := \res_p(\mathbf{K})+a\res_p(\mathbf{K}^{\Frob_p})$$ is cristalline, i.e.\,lies in $\coh^1_f(\Q_p,V_f)^a$.
\end{proposition}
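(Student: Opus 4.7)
The plan is to combine the description of $\partial_p\mathbf{K}$ provided by Proposition \ref{prop: C} with the diagrammatic dictionary of Corollary \ref{cor: main appendix}. Proposition \ref{prop: C} identifies $\partial_p\mathbf{K}$, viewed in $\coh^1_s(K_p,V_f)$ via the canonical isomorphism $\varphi$ of Corollary \ref{cor: main appendix}, with the element of $\hom_{\cont}(\Gamma_\anti,\Phi_{\infty,\fp}\otimes\Q_p)$ sending $\sigma_\anti\mapsto\bar\alpha$. The Frobenius eigenspace decomposition \eqref{eq: varphi pm} then immediately carries this anticyclotomic summand onto $\coh^1_s(K_p,V_f)^{-a}$, establishing the first assertion.

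To compute $\exp^*_{-a}(\partial_p\mathbf{K})$ explicitly, I would chase $\partial_p\mathbf{K}$ around the commutative square of Corollary \ref{cor: main appendix}. The left vertical map $\tfrac{1}{p}\mathrm{ev}_\anti$ sends $\partial_p\mathbf{K}$ to $\bar\alpha/p\in\Phi_{\infty,\fp}\otimes\Q_p$, which by the defining relation $\Pi_p=\varphi(\bar\alpha)$ is $(\Pi_p/p)\bar q$. The bottom horizontal map $\varrho$ carries this to $(\Pi_p/p)\,z$ in $K_p(\chi_E)^{-a}$; composing with the isomorphism of \eqref{eq: exp pm onto Qp} identifying $K_p(\chi_E)^{-a}\cong\Q_p$ via $k^{-a}\mapsto 1$ produces the desired value. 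The factor $\ord_p(q_E)=n$ enters through the asymmetric scaling $\bar\pi_*(\bar x_\cycl)=n\bar\xi_\cycl$ versus $\bar\pi_*(\bar x_\anti)=\bar\xi_\anti$ of Proposition \ref{prop:1}, already reflected in the identity $\exp^*(\bar x_\cycl)=n$ of Proposition \ref{prop: exp of Vf}. This bookkeeping is where I expect the main technical hurdle to lie.

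For the ``in particular'' assertion, since $H/\Q$ is Galois and the decomposition group of the chosen prime $\fp$ above $p$ is generated by $\Frob_p$ (because $p$ is inert in $K$ and splits completely in $H/K$), we have $\res_p(\mathbf{K}^{\Frob_p})=\Frob_p\cdot\res_p(\mathbf{K})$ in $\coh^1(K_p,V_f)$. Hence $\res_p(\mathbf{K})^a=(1+a\Frob_p)\res_p(\mathbf{K})$, and because $\Frob_p^2=1$ the operator $1+a\Frob_p$ lands in the $a$-eigenspace for $\Frob_p$. Applying $\partial_p$ and invoking the first part, $(1+a\Frob_p)\partial_p\mathbf{K}=\partial_p\mathbf{K}+a\cdot(-a)\partial_p\mathbf{K}=0$, so $\res_p(\mathbf{K})^a$ is annihilated by $\partial_p$ and therefore lies in $\coh^1_f(K_p,V_f)^a$.
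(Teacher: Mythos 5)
Your overall strategy matches the paper's proof exactly: the paper establishes the eigenspace claim from \eqref{eq: varphi pm} together with Proposition \ref{prop: C}, and then simply says the formula for $\exp^*_{-a}\partial_p\mathbf{K}$ ``follows from Corollary \ref{cor: main appendix}.'' Your treatment of the ``in particular'' assertion, using $\Frob_p^2=1$ and the fact that $\partial_p\mathbf{K}$ lies in the $(-a)$-eigenspace so that $(1+a\Frob_p)\partial_p\mathbf{K}=0$, is clean and correct.

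However, your diagram chase does not actually produce the stated value, and the gap you acknowledge is real but misdiagnosed. Starting from the homomorphism $\sigma_\anti\mapsto\bar\alpha$, applying $\tfrac{1}{p}\mathrm{ev}_\anti$ gives $(0,\bar\alpha/p)=(0,(\Pi_p/p)\bar q)$, and $\varrho$ carries this to $(\Pi_p/p)\,z$; normalizing $z=k^{-a}\mapsto 1$ as in \eqref{eq: exp pm onto Qp} yields $\exp^*_{-a}\partial_p\mathbf{K}=\Pi_p/p$, \emph{without} the factor $\ord_p(q_E)$. You suggest the factor $n=\ord_p(q_E)$ should enter via the asymmetric scaling $\bar\pi_*(\bar x_\cycl)=n\bar\xi_\cycl$ and $\exp^*(\bar x_\cycl)=n$ of Propositions \ref{prop:1} and \ref{prop: exp of Vf}, but this is the wrong place to look: that $n$-factor lives entirely in the \emph{cyclotomic} leg of the diagram, whereas $\partial_p\mathbf{K}$ lies in the \emph{anticyclotomic} summand, where $\bar\pi_*(\bar x_\anti)=\bar\xi_\anti$ and $\exp^*(\bar x_\anti)=z$ carry no $n$. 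So the mechanism you propose cannot supply the missing factor, and as written the argument does not establish the equality $\exp^*_{-a}\partial_p\mathbf{K}=\ord_p(q_E)\Pi_p/p$. If the factor $\ord_p(q_E)$ is indeed correct, it must be hidden in the precise normalization of the identification $\Phi_{\infty,\fp}\cong\Z_p$ (or of $\bar\alpha$ as the image of $\alpha_m$ in the $p$-Sylow quotient of $\Z/np^m\Z$), and a complete proof must track that normalization explicitly rather than invoke the cyclotomic asymmetry.
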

\begin{proof}
	Recall  we can regard $
	\hom_{\cont}(\Gamma_{\anti},\Phi_{\infty,\fp}\otimes\Q_p)$ as a subspace of $\coh^1_s(K_p,V_f)$. By  \eqref{eq: varphi pm}  $\Frob_p$ acts on $
	\hom_{\cont}(\Gamma_{\anti},\Phi_{\infty,\fp} \otimes\Q_p)$
	as multiplication by $-a$. Hence $ \res_p(\mathbf{K})$ belongs to $\coh^1_s(K_p,V_f)^{-a}$ by Proposition \ref{prop: C}, and the formula for $\exp^*_{-a}\partial_p\mathbf{K}$ follows from Corollary \ref{cor: main appendix}.
\end{proof}

%\begin{remark}\label{rem: Ca cristalline}
%	As a consequence of the previous proposition, the class
%	\begin{equation*}
%		\res_p(C)^a:=\res_p(C)+ a\res_p(C^{\Frob_p})\in\coh^1(\Q_p,V_f)^a,
%	\end{equation*}
%\end{remark}

 %For $i\in\{ 1,2\}$ there is a commutative diagram
%\begin{equation}\label{eq: diagram of traces}
%	\xymatrix{\coh^1(H,V_f)\ar[r]^{\Tr_{H/K}}\ar[d] & \coh^1(K,V_f)\ar[d]\\
%	\coh^1(H,V_f\otimes\psi_i)\ar[r]^{\Tr_{H/K}^{\psi_i}}& \coh^1(K,V_f\otimes\psi_i)
%	}
%\end{equation}
%where $	\Tr_{H/K}:=\sum_{\sigma\in\tilde S}\sigma$.
%Since $\psi_i$ is trivial when restricted to $G_H$, we have 
%\begin{equation*}
%\coh^1(H,V_f\otimes\psi_i)=\coh^1(H,V_f)\otimes\psi_i, \qquad \Tr_{H/K}^{\psi_i}=\sum_{\sigma\in\tilde S}\psi_i(\sigma)\sigma,
%\end{equation*}  and the first vertical map of \eqref{eq: diagram of traces} is just
%		$\coh^1(H,V_f)\longrightarrow\coh^1(H,V_f)\otimes\psi_i$, $x\mapsto x\otimes1.$

%Call $\Tr_i$ the composition 
%\begin{equation*}
%	\Tr_i:\coh^1(H,V_f)\longrightarrow\coh^1(K,V_f\otimes\psi_i)
%\end{equation*}
%of the maps in diagram \eqref{eq: diagram of traces}.

For $i=1,2$ let $$\Tr_i = \sum_{\sigma\in\tilde S}\psi_i(\sigma)\sigma: \coh^1(H,V_f)\longrightarrow\coh^1(K,V_f\otimes\psi_i)$$ denote the trace map onto the $\psi_i$-isotypic component
%\footnote{He borrado el diagrama anterior y he definido directamente la traza $Tr_i$. En la introduccion dices que es un isomorfismo pero no me lo creo, y tampoco lo necesitamos. S\'i que es un isomorfismo cuando consideramos los cuerpos locales $H_p=K_p$ (que es lo que usamos), pero no es isomorfismo sobre los cuerpos globales $H$ y $K$. Si esta de acuerdo puedes cambiar el color azul a negro.}
and consider the isomorphism given by Shapiro's Lemma
\begin{equation}\label{eq: shapiro global}
	\Sh:\coh^1(K,V_f\otimes\psi_i)\overset{\cong}{\longrightarrow}\coh^1(\Q,V_f\otimes\Ind_{G_K}^{G_\Q}(\psi_i)) =: \coh^1(\Q,V_i).
\end{equation}
Define
\begin{equation*}
	\mathbf{K}^{\psi_i}:=\Sh(\Tr_i(\mathbf{K}))\in\coh^1(\Q,V_i).
\end{equation*}
It follows from Proposition \ref{prop: C} that $\mathbf{K}^{\psi_i}$ lies in $\Sel_{(p)}(V_i)$.

%Recall  we denoted $L$ the number field generated by the Fourier coefficients of $g$ and $h$ and that we fixed a completion $L_p$ of $L$ at a prime above $p$. Both $\psi_1,\psi_2$ of $\Gal(H/K)$ take values in $L^\times$. and
%\color{blue}
%\begin{equation*}
%V_{\psi_i}=\Ind_{H}^{\Q}(\psi_i)=\{ v:G_{\Q}\longrightarrow L(\psi) \mid v(\sigma\tau)=\psi_i(\sigma)v(\tau) \ \forall\sigma\in G_{H}, \tau\in G_{\Q}  \}.
%\end{equation*}
%Consider the map 
%\begin{equation}\label{ev}
%ev: V_{\psi_i}\longrightarrow L, \qquad v\mapsto v(1).
%\end{equation}
%It is an equivariant $G_{H}$-morphism, which is compatible with the inclusion $G_{H}\hookrightarrow G_{\Q}$, so it induces a morphism
%\begin{equation}\label{eq: shp-1}
%\Sh^{-1}:\coh^1(\Q,V_f\otimes V_{\psi_i})\longrightarrow \coh^1(H,V_f\otimes L_p)=\coh^1(K_p,V_f\otimes\psi_i),
%\end{equation}
%which is the inverse of Shapiro's isomorphism \eqref{eq: shapiro global} restricted to $G_{K_p}$.

Frobenius element $\Frob_p\in G_{\Q_p}$ acts on $\coh^1(K_p,V_f)$  as an involution and we may consider the decomposition in $\pm$-eigenspaces
\begin{equation*}
\coh^1(K_p,V_f\otimes L_p)=\coh^1(K_p,V_f\otimes L_p)^+\oplus\coh^1(K_p,V_f\otimes L_p)^-.
\end{equation*}

As one readily verifies, Shapiro's isomorphism restricts to
%\footnote{Este isomorfismo es bastante obvio, ya veremos si lo dejamos aqui o no. En la proof de abajo he puesto la frase "Since Shapiro's local isomorphism $\Sh_p$ is $\Frob_p$-equivariant " y alomejor con esto ya es suficiente. en la tesis ya pones los detalles de esta afirmacion.}
\begin{equation}\label{lem: shapiro and eigenspaces}
\Sh_p:\coh^1(K_p,V_f\otimes L_p)^\pm\overset{\cong}{\longrightarrow}\coh^1(\Q_p,V_f\otimes V_{\psi_i}^\pm).
%, \qquad \Sh_p^-:\coh^1(K_p,V_f\otimes L_p)^-\overset{\cong}{\longrightarrow}\coh^1(\Q_p,V_f\otimes V_{\psi_i}^-)
\end{equation}

%\begin{proof}
%	In order to simplify the notation, we prove that
%\begin{equation*}
%	\Sh_p:\coh^1(K_p, L_p)^\pm\overset{\cong}{\longrightarrow}\coh^1(\Q_p, V_{\psi_i}^\pm).
%\end{equation*}	
%Let $\xi:G_{\Q_p}\longrightarrow V_{\psi}$	be a cocycle, let $a\in\{ \pm \}$ and suppose that the image of $\xi$ is contained in $V_{\psi_i}^a$. Then, for all $\sigma\in G_{K_p}$ we have 
%\begin{align*}
%	(\Sh_p^{-1}(\xi))^{\Frob_p}(\sigma) & = \Sh_p^{-1}(\xi)(\Frob_p\sigma\Frob_p^{-1})\\
%	& = \xi(\Frob_p\sigma\Frob_p^{-1})(1)\\
%	& = \Frob_p^{-1}\sigma\xi(\Frob_p)(1) + \Frob_p^{-1}\xi(\sigma)(1) + \xi(\Frob_p^{-1})(1)\\
%	& = a\cdot  \sigma\xi(\Frob_p)(1) +  a\cdot \xi(\sigma)(1) + \xi(\Frob_p^{-1})(1) \\
%	&= a\cdot  \sigma\xi(\Frob_p)(1) +  a\cdot \xi(\sigma)(1)  -a\cdot\xi(\Frob_p)(1) \\ 
%	& = a(\sigma\xi(\Frob_p)(1)-\xi(\Frob_p)(1)) + a\cdot \xi(\sigma)(1)\\
%	& =  a\cdot \xi(\sigma)(1) = a\cdot\Sh_p(\xi)(\sigma) \text{ modulo coboundaries}.
%\end{align*}
%\end{proof}

\begin{corollary}\label{prop: fine} The Kolyvagin class
	$\mathbf{K}^{\psi_i}$ satisfies
	\begin{equation*}
	\mathbf{K}^{\psi_i} = 	\dfrac{h\ord_p(q_E)\Pi_p}{ p}\cdot\xi_i^{-a}.
	\end{equation*}
\end{corollary}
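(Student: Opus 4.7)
The plan is to apply the injection $\partial_p:\Sel_{(p)}(V_i)\hookrightarrow \coh^1_s(\Q_p,V_i)$ from \eqref{eq: Sel(p)(Vi) iso con H1s} to both sides of the asserted identity, reducing the statement to a comparison of local singular classes. Since $\partial_p\xi_i^{-a}=X_{-a}\otimes v_i^{-a}$ by Corollary \ref{cor: basis of sel(p)(Vi)} and the identification \eqref{eq: H1fpm and H1spm}, the target becomes
\[
\partial_p \mathbf{K}^{\psi_i}\;=\;\frac{h\,\ord_p(q_E)\,\Pi_p}{p}\cdot X_{-a}\otimes v_i^{-a}\qquad\text{in }\coh^1_s(\Q_p,V_i^{-a}).
\]

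The first step is to exploit naturality of Shapiro's isomorphism with respect to restriction at $p$, so that $\res_p(\mathbf{K}^{\psi_i})=\Sh_p(\res_p(\Tr_i\mathbf{K}))$ with $\Sh_p$ the local map of \eqref{lem: shapiro and eigenspaces}. Because $p$ is inert in $K$ and splits completely in $H/K$ (since $p$ is coprime to the conductor of $\psi_i$), the restriction $\psi_i|_{G_{K_p}}$ is trivial, hence $\res_p(\Tr_i\mathbf{K})$ lands in $\coh^1(K_p,V_f\otimes\psi_i)=\coh^1(K_p,V_f)$; moreover $\Sh_p$ carries the $\Frob_p$-eigenspace decomposition of $\coh^1_s(K_p,V_f)$ onto the $V_{\psi_i}^\pm$-pieces of $\coh^1_s(\Q_p,V_i)$. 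Since Proposition \ref{prop: exp partialp C} places $\partial_p\mathbf{K}$ in the $-a$-eigenspace, this forces the $v_i^{+a}$-component of $\partial_p\mathbf{K}^{\psi_i}$ to vanish automatically, leaving only the $v_i^{-a}$-piece to be identified.

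The heart of the argument is the local trace computation
\[
\res_p(\Tr_i\mathbf{K})\;=\;\sum_{\sigma\in\tilde S}\psi_i(\sigma)\,\res_\mathfrak{p}(\sigma\cdot\mathbf{K})\quad\text{in }\coh^1(K_p,V_f),
\]
which under the bijection $\tilde S\leftrightarrow\{\mathfrak{p}'\mid p\}$ arising from total splitting of $p$ in $H$ becomes a sum over the $h=[H:K]$ primes above $p$. Combining this with the Heegner-point description of $\partial_p(\sigma\cdot\mathbf{K})$ flowing from Proposition \ref{prop: C} (each term is the image of the conjugate Heegner point $\sigma\cdot\alpha_m$ in $\Phi_{\infty,\mathfrak{p}}$ through $\varphi$), the $\psi_i$-weighted sum can be unwound, via the normalization $v_i^\pm(1)=1$ from \eqref{vi(1)=1}, into exactly $h\cdot\partial_p\mathbf{K}\otimes v_i^{-a}$. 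What prevents the naive cancellation suggested by the orthogonality $\sum_\sigma\psi_i(\sigma)=0$ is precisely that Shapiro's isomorphism repackages the $\psi_i$-weighted sum through the eigenvector $v_i^{-a}$ of $V_{\psi_i}$, rather than through the trivial-character projection acting on the local classes.

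Finally, using $\exp^*_{V_f}(\partial_p\mathbf{K})=\tfrac{\ord_p(q_E)\,\Pi_p}{p}$ from Proposition \ref{prop: exp partialp C} together with $\exp^*(X_{-a})=1$ from Corollary \ref{cor: basis of Sel(p)}, one reads off $\partial_p\mathbf{K}=\tfrac{\ord_p(q_E)\,\Pi_p}{p}\,X_{-a}$, and therefore
\[
\partial_p \mathbf{K}^{\psi_i}\;=\;h\cdot\partial_p\mathbf{K}\otimes v_i^{-a}\;=\;\frac{h\,\ord_p(q_E)\,\Pi_p}{p}\,\partial_p\xi_i^{-a},
\]
whence the corollary by injectivity of $\partial_p$ on the relaxed Selmer group. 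I expect the main obstacle to be the local Shapiro--trace computation in the third paragraph: one must track carefully how the Galois-conjugation action on the Heegner points $\alpha_m$ interacts with the singular-quotient images at the distinct primes $\mathfrak{p}'\mid p$, and verify that the $h$ contributions combine constructively with the $\psi_i$-weights and the chosen normalization of $v_i^{-a}$ to yield the factor $h$ rather than cancelling.
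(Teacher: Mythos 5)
Your proposal follows essentially the same route as the paper: reduce the identity to one in the singular quotient via the injectivity of $\partial_p$ on $\Sel_{(p)}(V_i)$, invoke Proposition \ref{prop: exp partialp C} to place $\partial_p\mathbf{K}$ in the $-a$-eigenspace and to compute $\exp^*_{-a}(\partial_p\mathbf{K})$, use the normalization $v_i^{\pm}(1)=1$ and $\exp^*_\pm(X_\pm)=1$, and deduce $\partial_p\mathbf{K}^{\psi_i}=h\,\partial_p\mathbf{K}\otimes v_i^{-a}$. The only cosmetic difference is in how you phrase the middle Shapiro step: the paper unwinds $\Sh_p^{-1}$ explicitly via the evaluation map $ev\colon V_{\psi_i}\to L_p$, $v\mapsto v(1)$, whereas you describe it as a $\psi_i$-weighted trace over the $h$ primes above $p$; both routes rest on the identity $\res_p(\mathbf{K}^{\psi_i})=h\cdot\Sh_p(\res_p(\mathbf{K}))$ (coming from $\psi_i|_{G_{K_p}}=1$) and the chosen normalization of $v_i^{-a}$.
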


\begin{proof}
	%Consider the commutative diagram
	%	\begin{equation}\label{eq: last diagram}
	%	\xymatrix{
	%		\coh^1(H,V_f)\ar[r]^{\Tr_i \quad }\ar[d]^{\res_\fp}& \coh^1(K,V_f\otimes\psi_i) \ar[r]^{\quad \Sh}\ar[d]^{\res_p}& \coh^1(\Q,V_i)\ar[d]^{\res_p}\\
	%		\coh^1(K_p,V_f)\ar[r]^{\cdot h \quad }& \coh^1(K_p,V_f\otimes L_p) \ar[r]^{\quad \Sh_p}& \coh^1(\Q_p,V_i)
	%\\
	%\coh^1_s(K_p,V_f)\ar[r]^{\cdot h \quad }& \coh^1_s(K_p,V_f\otimes L_p) \ar[r]^{\quad \Sh_p}& \coh^1_s(\Q_p,V_i)
	%		}
	%	\end{equation}
	Since $\psi_{i|G_{K_p}} = 1$, the restriction of $\Tr_i$ to $\coh^1(H_\fp,V_f\otimes L_p)=\coh^1(K_p,V_f\otimes L_p)$ is  multiplication by $h=[H:K]$. Hence $ \res_p(\mathbf{K}^{\psi_i})=h\cdot \Sh_p( \res_p(\mathbf{K}))$. 
	
	By Proposition \ref{prop: exp partialp C} we have $\partial_p  \mathbf{K}\in\coh^1_s(K_p,V_f)^{-a}$, and by  \eqref{lem: shapiro and eigenspaces}, it follows that $\partial_p  \mathbf{K}^{\psi_i}\in\coh^1_s(\Q_p,V_{\psi_i}^{-a}) $.   Recall the choice of basis $\xi_i^{\pm}$ made in Corollary \ref{cor: basis of sel(p)(Vi)}. We may thus write 
	%using  \eqref{lem: shapiro and eigenspaces} we obtain
	\begin{equation*}\label{eq: partial shap}
	%\partial_pC_p^{\psi_i}=h\cdot \Sh_p^{-a}(\partial_p C_p) \ \ \text{ and } \ \  
	\mathbf{K}^{\psi_i}=\dfrac{\exp^*_{-a}(\partial_p  \mathbf{K}^{\psi_i})}{\exp^*_{-a}(\partial_p \xi_i^{-a})}\cdot\xi_i^{-a}.
	\end{equation*}
	By definition of the basis $\{ \xi_i^+, \xi_i^-   \}$ and by \eqref{eq: def of omegapm}, the denominator in the above expression is $\exp^*(X_{-a})\langle v_i^{-a},\omega_i^{-a}\rangle = \langle v_i^{-a},\omega_i^{-a}\rangle $.
Let $$R\otimes v_i^{-a}\in \coh^1_s(K_p,V_f)\otimes V_{\psi_i}^{-a}$$ denote the image of $\partial_p\mathbf{K}^{\psi_i}$ via the isomorphism	\eqref{eq: H1fpm and H1spm}. Then 
\begin{equation*}
	\mathbf{K}^{\psi_i}= \dfrac{\exp^*(R) \langle v_i^{-a},\omega_i^{-a}\rangle }{ \langle v_i^{-a},\omega_i^{-a}\rangle }\cdot \xi_{i}^{-a} = \exp^*(R)\cdot \xi_{i}^{-a} .
\end{equation*}
In order to compute the dual exponential of $R$, we need the following explicit expression for \eqref{eq: shapiro explicit} for $\Sh_p$.
Recall that, as a $L_p[G_{\Q_p}]$-module,
\begin{equation*}
V_{\psi_i}=\Ind_{G_{K_p}}^{G_{\Q_p}}(1)=\{ v:G_{\Q_p}\longrightarrow L_p \mid v(\sigma\tau)=v(\tau) \ \forall\sigma\in G_{K_p}, \tau\in G_{\Q_p}  \}.
\end{equation*}
Consider the map 
\begin{equation*}\label{ev}
ev: V_{\psi_i}\longrightarrow L_p, \qquad v\mapsto v(1).
\end{equation*}
It is an equivariant $G_{K_p}$-morphism which is compatible with the inclusion $G_{K_p}\hookrightarrow G_{\Q_p}$, so it induces a morphism
\begin{equation}\label{eq: shp-1}
\Sh_p^{-1}:\coh^1(\Q_p,V_f\otimes V_{\psi_i})\longrightarrow \coh^1(K_p,V_f\otimes L_p)=\coh^1(K_p,V_f\otimes\psi_i),
\end{equation}
which is the inverse of Shapiro's isomorphism \eqref{eq: shapiro global} restricted to $G_{K_p}$.
More explicitly, if $\xi:G_{\Q_p}\longrightarrow V_f\otimes V_{\psi_i}$ represents a class in $\coh^1(\Q_p,V_f\otimes V_{\psi_i})$, then 
\begin{equation}\label{eq: shapiro explicit}
\Sh_p^{-1}(\xi):=(\id\otimes ev)\circ\xi_{|G_{K_p}}
\end{equation}
represents its image via \eqref{eq: shp-1}.
Recall that $R\otimes v_i^{-a}$ is the image of $h\cdot \partial_p\mathbf{K}$ via the composition
\begin{equation*}
\coh^1_s(K_p,V_f\otimes L_p)^{-a}\overset{\Sh_p}{\longrightarrow}\coh^1_s(\Q_p,V_f\otimes V_{\psi_i}^{-a})\overset{\Res}{\longrightarrow}\coh^1_s(K_p,V_f\otimes V_{\psi_i}^{-a})^{G_{\Q_p}}\cong \coh^1_s(K_p,V_f)^{-a}\otimes V_{\psi_i}^{-a}.
\end{equation*}
We conclude that the element  $R\otimes v_i^{-a}$ is represented by the cocycle
\begin{align*}
h\cdot\Res(\Sh_p(\partial_p\mathbf{K})):G_{K_p}\longrightarrow V_f\otimes V_{\psi_i}^{-a}
\end{align*}
satisfying 
\begin{equation*}
	R\cdot( v_i^{-a}(1)) =h\cdot \partial_p\mathbf{K}.
\end{equation*}
In other words, by \eqref{vi(1)=1}, we have
\begin{equation*}
	\exp^*(R) =
	%\dfrac{h\cdot\exp^*(\partial_p C)}{ v_i^{-a}(1)}= 
	h\cdot\exp^*(\partial_p \mathbf{K}).
\end{equation*}
The statement follows by applying Proposition \ref{prop: exp partialp C}.
\end{proof}

\begin{corollary}\label{cor: p inert explicit version nonsplit}
	We have%\footnote{En el teorema A de la intro, $\Pi_p$ aparece en el numerador, y aqui en el denominador. Cual esta bien o cual esta mal?}	
	\begin{equation*} 
I_p(f,g_\alpha,h_\alpha) = \dfrac{\sqrt{c} \cdot 2p(1-1/p)^2}{\ord_p(q_E)}\cdot\dfrac{\sqrt{L(E\otimes\rho,1)}}{\pi^2\langle f, f\rangle\Pi_p}%\footnote{el cociente $\dfrac{ v_i^{-a}(1)}{v_i^a(1)}$ pertenece a $L_p$, y yo creo que no depende de $i$, entento explicar la razon.
%Sabemos que $\psi_{1|G_{K_p}}= \psi_{2|G_{K_p}}=1$. Entonces si miras a la demo del Lemma \ref{lem: decomposition of Vgh and eigenvectors} te das cuenta que naturalmente podemos identificar
%$$V_{\psi_1} = V_{\psi_2} = \Ind_{K_p}^{\Q_p}(1)\otimes L_p$$
%via
%$$ v_1^+=v_2^+=v^+, \qquad   v_1^-=v_2^-=v^-$$
%donde:
%$$v^+=u+v, \qquad v^-= u-v$$ y $(u,v)$ es la base de $\Ind_{K_p}^{\Q_p}(1)$ tal que $v=\Frob_p u$, y tal que con respecto de $(u,v)$ la rep $Ind_{K_p}^{\Q_p}(1)$ es dada por las matrices $\begin{pmatrix}
%1 & 0\\ 0 & 1
%\end{pmatrix}$ sobre $G_{K_p}$ y $\begin{pmatrix}
%0 & 1\\ 1 & 0
%\end{pmatrix}$ sobre $G_{\Q_p}\smallsetminus G_{K_p}$.
%Entonces tendremos $$\dfrac{v_1^{-a}(1)}{v_1^{a}(1)}= \dfrac{v_2^{-a}(1)}{v_2^{a}(1)}= \dfrac{v^{-a}(1)}{v^{a}(1)}.$$
%Ademas, si lo que he dicho hasta ahora es cierto, me parece claro que $v^+,v^-$ es una base sobre $\Q_p$ de $\Ind_{K_p}^{\Q_p}(1)$, que luego tensorizamos con $L_p$, (la imagen del caracter trivial esta contenida en  $\Q_p$!) asi que ahora me parece que podemos decir que $\dfrac{v_1^{-a}(1)}{v_1^{a}(1)}\in\Q_p$..
% }
\cdot\dfrac{1}{\L_{g_\alpha}}\times \log_p(Q_p^a) \ \mod L^\times,
\end{equation*} 
	where $Q_p^{a}\in E(K_p)^{a}$ is characterized by  $\delta_p(Q_p^{a})= \res_p(\mathbf{K})^a\in\coh^1_f(K_p,V_f)^{a}$. 
\end{corollary}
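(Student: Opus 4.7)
The plan is to derive the corollary as a formal consequence of Theorem \ref{prop q no esta bien} by identifying the local point $P^a\in E(K_p)^a$ appearing there with a rational multiple of $Q_p^a$. Once this identification is established, substitution into the formula of Theorem \ref{prop q no esta bien} yields the claim, since every rational scalar that arises (in particular the factors $2$, $p$, $\ord_p(q_E)$ and $(1-1/p)^2$) belongs to $\Q^\times\subset L^\times$ and thus does not affect the statement modulo $L^\times$.

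To make the identification, I begin from the global identity of Corollary \ref{prop: fine},
\[
\mathbf{K}^{\psi_i}=\frac{h\,\ord_p(q_E)\,\Pi_p}{p}\,\xi_i^{-a},
\]
and combine it with the observation used in its proof that $\res_p(\mathbf{K}^{\psi_i})=h\cdot \Sh_p(\res_p(\mathbf{K}))$, a consequence of $\psi_{i|G_{K_p}}=1$ and $[H:K]=h$. Cancelling $h$ on both sides of the restriction of the above identity at $p$ produces the local equality
\[
\Sh_p(\res_p(\mathbf{K}))=\frac{\ord_p(q_E)\Pi_p}{p}\res_p(\xi_i^{-a}) \quad \text{in } \coh^1(\Q_p,V_i).
\]
I then project both sides onto the $V_{\psi_i}^a$-component via $\pi_i^a$. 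On the right, Lemma \ref{lem: tutti uguali H1f} gives $\pi_i^a\res_p(\xi_i^{-a})=\delta_p(P^a)\otimes v_i^a$. On the left, the explicit description \eqref{eq: shapiro explicit} of $\Sh_p^{-1}$, together with the splitting $V_{\psi_i}|_{G_{\Q_p}}=V_{\psi_i}^+\oplus V_{\psi_i}^-$ as $G_{\Q_p}$-modules (possible because $\psi_{i|G_{K_p}}=1$, so both summands are $G_{K_p}$-trivial), shows that $\pi_i^a\circ\Sh_p$ coincides, under the identification $V_{\psi_i}^a\cong L_p$ given by evaluation at $1$, with the $\Frob_p$-eigenprojection $\tfrac12\bigl(\res_p(\mathbf{K})+a\res_p(\mathbf{K})^{\Frob_p}\bigr)=\tfrac12\res_p(\mathbf{K})^a$ on $\coh^1(K_p,V_f)$.

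Comparing the two expressions yields $\tfrac12\res_p(\mathbf{K})^a=\tfrac{\ord_p(q_E)\Pi_p}{p}\delta_p(P^a)$ in $\coh^1_f(K_p,V_f)^a$. Since $\delta_p(Q_p^a)=\res_p(\mathbf{K})^a$ by definition of $Q_p^a$ and the Kummer map is injective on $E(K_p)\otimes\Q_p$, this gives $Q_p^a=\tfrac{2\ord_p(q_E)\Pi_p}{p}P^a$ in $E(K_p)^a\otimes\Q_p$, and consequently $\log_p(P^a)=\tfrac{p}{2\ord_p(q_E)\Pi_p}\log_p(Q_p^a)$. Plugging this relation into the formula of Theorem \ref{prop q no esta bien} and simplifying modulo $L^\times$ completes the proof. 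The main technical obstacle lies in the middle paragraph, namely in carefully verifying that $\pi_i^a\circ\Sh_p$ corresponds to twice the $\Frob_p$-eigenprojection: this is essentially standard, but requires tracking the explicit form of Shapiro's isomorphism and the normalization $v_i^\pm(1)=1$ chosen in \eqref{vi(1)=1}.
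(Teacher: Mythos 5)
Your argument is correct and follows essentially the same route as the paper: both proofs restrict the identity $\mathbf{K}^{\psi_i}=\tfrac{h\,\ord_p(q_E)\,\Pi_p}{p}\,\xi_i^{-a}$ from Corollary \ref{prop: fine} to $p$, use the compatibility \eqref{lem: shapiro and eigenspaces} of $\Sh_p$ with the $\Frob_p$-eigendecompositions to project onto the $a$-component, and invoke Lemma \ref{lem: tutti uguali H1f} to identify the resulting local class with $\delta_p(P^a)$, then substitute into Theorem \ref{prop q no esta bien}. Your version is slightly more careful about the factor of $2$ coming from the eigenprojection $\tfrac12(x+a\,x^{\Frob_p})$ (a factor the paper absorbs silently), but since $2\in L^\times$ this makes no difference modulo $L^\times$.
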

\begin{proof}
	By \eqref{lem: shapiro and eigenspaces},  if  $\pi_{a}:\coh^1(\Q_p,V_i)\longrightarrow\coh^1(\Q_p,V_i^{a})$ denotes the natural projection, then
	\begin{equation}\label{lab}
	\pi_{a}\res_p(\mathbf{K}^{\psi_i}) = h\cdot \Sh_p(\res_p(\mathbf{K})^a)\in\coh^1_f(\Q_p,V_i^{a}). 
	\end{equation}
	Arguing as in the proof of Corollary \ref{prop: fine}, write 
	\begin{equation*}
		A\otimes v_i^a\in \coh^1_f(K_p,V_f)^a\otimes V_{\psi_i}^a
	\end{equation*}
	for the image of \eqref{lab} via the isomorphism \eqref{eq: H1fpm and H1spm}. Then 
	\begin{equation*}
		A= \dfrac{h\cdot \res_p(\mathbf{K})^a}{v_i^a(1)}=h\cdot \res_p(\mathbf{K})^a.
	\end{equation*}
Combining this with Corollary \ref{prop: fine} and Lemma \ref{lem: tutti uguali H1f}, we obtain
\begin{equation*}
h\cdot\res_p(\mathbf{K})^a\otimes v_i^a = \pi_a\res_p(\mathbf{K}^{\psi_i}) = \dfrac{h\ord_p(q_E)\Pi_p}{p }\delta_p(P^{a})\otimes v_i^a.
\end{equation*}
Then
\begin{equation*}
 \delta_p(P^{a})= 	\dfrac{p }{\ord_p(q_E)\Pi_p} \cdot \res_p(\mathbf{K})^a,
\end{equation*}
and the thesis follows by applying Theorem	\ref{prop q no esta bien}.
	
\end{proof}

\color{black}

\bibliographystyle{alpha}
\bibliography{refs2}

\end{document}